\numberwithin{equation}{section}
\newtheorem{theorem}{Theorem}[section]
\newtheorem{lemma}[theorem]{Lemma}
\newtheorem{prop}[theorem]{Proposition}
\newtheorem{cor}[theorem]{Corollary}
\newtheorem{definition}[theorem]{Definition}
\theoremstyle{remark}
\newtheorem{remark}[theorem]{Remark}
\def \supp {\text{\rm supp\hspace{.1em}}}
\def \Lip {\text{\rm Lip\hspace{.1em}}}
\def \beqq {\begin{equation}}
\def \eeqq {\end{equation}}
\def \bpf {\begin{proof}}
\def \epf {\end{proof}}
\def \beq {\begin{equation*}}
\def \eeq {\end{equation*}}
\def \eps {\epsilon}
\def \tilde {\widetilde}
\def \RR {{\mathbb R}}
\def \SS {{\mathbb S}}
\def \ii {\text{\rm i}}
\def \bH {\text{\bf H}}
\def \bfU {\text{\bf U}}
\def \bfu {\text{\bf u}}
\def \bfF {\text{\bf F}}
\def \bfG {\text{\bf G}}
\newcounter{daggerfootnote}
\title[Pencil-Beam approximation]{Pencil-Beam approximation of stationary Fokker-Planck}
\author{Guillaume Bal$^\dagger$}
\thanks{$^\dagger$Departments of Statistics and Mathematics, University of Chicago, Chicago, IL.\\ {\em Email address:} \texttt{guillaumebal@uchicago.edu}}
\author{Benjamin Palacios$^\ddagger$}
\thanks{$^\ddagger$Department of Statistics, University of Chicago, Chicago, IL.
\\ {\em Email address:} \texttt{bpalacios@uchicago.edu}}
\begin{document}
\maketitle

\begin{abstract}
Solutions of stationary Fokker-Planck equations in the narrow beam regime are commonly approximated by either ballistic linear transport or by a Fermi pencil-beam equation. We present a rigorous approximation analysis of these three models in a half-space geometry.  Error estimates are obtained in a 1-Wasserstein sense, which is an adapted metric to quantify beam spreading. The required well-posedness and regularity results for the stationary Fokker-Planck equation with singular internal and boundary sources are also presented in detail.
\end{abstract}

\section{Introduction}
We consider the stationary {\em(forward) Fokker-Planck equation} in the half-space $\RR^n_+=\{x=(x',x^n)\in\RR^n;x^n>0\}$ in dimension $n\geq 2$ given by:
 \begin{equation}\label{FP}
- \epsilon^2\sigma\Delta_\theta u  + \theta \cdot\nabla_x u + \lambda u= f,\quad (x,\theta )\in Q:=\RR^n_+\times\SS^{n-1},
\end{equation}
with $\nabla_x $ the spatial gradient and $\Delta_\theta$ the Laplace-Beltrami operator on the unit sphere $\SS^{n-1}$. The coefficients $\sigma(x)$ and $\lambda(x)$ are spatially-dependent functions bounded above and below by positive constants and $\epsilon>0$ is a scaling parameter. More concretely,
\begin{equation*}
0<\sigma_0\leq \sigma(x) \quad \text{and} \quad 0<\lambda_0 \leq \lambda,
\end{equation*}
for fixed constants $\sigma_0,\lambda_0$.

The Fokker-Planck equation \eqref{FP} may be used to model at a macroscopic level the propagation of high-frequency waves in heterogeneous media in the regime of highly peaked-forward scattering; see, e.g., \cite{BKR}. It may also be formally derived from a linear Boltzmann equation, also in the highly peaked forward scattering regime \cite{Po,A}. In \eqref{FP}, $f(x,\theta)$, $\epsilon^2\sigma(x)$ and $\lambda(x)$, represent an internal source/sink of particles at a point $x$ and direction $\theta$, the level of diffusion, and the amount of absorption,  respectively.

Our aim is to analyze the regime of narrow beam propagation. 
Heuristically, a beam propagating with a diffusion coefficient of order $O(1)$ changes directions over comparable distances or times with normalized speed. The narrow beam structure is therefore preserved only over short distances, which after appropriate rescaling takes the form  \eqref{FP} with a small diffusion coefficients of order $O(\eps^2)$.

The narrow beam will be initiated at the boundary of the domain, which for simplicity we assume to be a half space in most of the paper. 

In a general smooth domain $\Omega\subset\RR^n$, the boundary source of particles generates the following incoming boundary conditions for \eqref{FP}:
 \begin{equation}\label{FP_bc}
 u = g\quad\text{on}\quad \Gamma_- := \{(x,\theta)\in \partial\Omega\times\SS^{n-1} : \; \theta\cdot \nu(x)<0\},
 \end{equation}
where $\nu(x)$ is the outer unit normal vector at $x$ on the boundary. Similarly, the outgoing boundary is defined as
\begin{equation*}
\Gamma_+ := \{(x,\theta)\in\partial\Omega\times\SS^{n-1} : \;\theta\cdot \nu(x)>0\}.
\end{equation*}

Our subsequent analysis will repeatedly use solutions to the  {\em backward Fokker-Planck equation}, which is obtained by changing the sign of the transport operator $T=\theta\cdot\nabla_x$ in \eqref{FP} and imposing boundary conditions on the set $\Gamma_+$ instead.

We denote by $P$ the  Fokker-Planck operator and by $P^t$ its adjoint. The parameter $\epsilon>0$ measures the strength of the rescaled diffusion coefficient. We are interested in  the asymptotic regime $\epsilon\ll 1$ and $\sigma = O(1)$, where we use the standard notation $O(\epsilon^{m})$ whenever a term is bounded from above by a constant times the factor $\epsilon^m$, $m\geq 0$, and write $O(\epsilon^\infty)$ when the latter holds for every $m>0$. We use the symbol  $C$ as a generic positive constant depending on the dimension and coefficients of the problems but independent of $\epsilon$, and which may vary from line to line. 

There is a large literature on the analysis of the Fokker-Planck and similar kinetic equations. 
Existence and uniqueness questions are addressed in, e.g., \cite{DM-G,C,SH} and \cite{HJJ, HJJ2} from different points for view.
The question of regularity of solutions can be traced back to the classical works of hypoelliptic operators \cite{H} and subelliptic estimates \cite{K}. A more specific treatment of regularity in Fokker-Planck, based on sub-elliptic estimates, is presented in \cite{Bo}. A different approach to regularity can be found for instance in \cite{GIMV,WZ}, where iterative methods of the form of Moser or De Giorgi techniques are employed to prove H\"older continuity of solutions under sharper conditions on the coefficients. The results obtained from both strategies, subelliptic estimates (to obtain gains in regularity) and iterative methods (to gain continuity conditions), will be used in the derivations of this paper. 
There has been an increased interest in recent years in the study of the non-local Fokker-Planck models with fractional angular laplacians. Some references are \cite{AlS, GPR,IS}. Most of the analysis presented in this work should extend to the fractional case although we do not consider this issue in detail here. Let us also briefly mention recent work on the reconstruction of coefficients in the forward-peaked regime of Fokker-Planck in \cite{CLW2}.\\

The regime of small diffusions $\epsilon\ll 1$ in \eqref{FP}  appears, for instance, in the modeling of laser light propagation in turbulent atmospheres \cite{RoRe,HaBDH,KiKe,CCOCPH}. In the small diffusion scaling, one may be tempted to completely remove the diffusive part, which yields the following ballistic linear transport equation,
\begin{equation}\label{eq:BT}
\theta\cdot\nabla_x v + \lambda v = f,\quad (x,\theta)\in\RR^n_+\times\SS^{n-1},
\end{equation}
whose solutions can be explicitly written in terms of the source functions. 

We will consider the accuracy of this simple model. A more interesting and more accurate asymptotic description is given by the Fermi pencil-beam equation,
\begin{equation}\label{eq:FPB}
- \tilde{\sigma}\Delta_V U  + V \cdot\nabla_{X'} U+ \partial_{X^n}U  + \tilde{\lambda}U= f,\quad (X,V)\in \RR^{n}_+\times\RR^{n-1},
\end{equation}
with $\Delta_V$ the Euclidean Laplacian. The above equation may be formally obtained from \eqref{FP} (rescaled so that $\tilde{\sigma}$ is $O(1)$) by replacing the Laplace-Beltrami operator $\Delta_\theta$ by its approximation on the tangent plane $\Delta_V$ and assuming that the beam direction along $X^n$ is constant. Such solutions, unlike ballistic transport, capture beam spreading while completely neglecting backscattering (back to the boundary of $\RR^n_+$). They admit reasonably explicit expressions \cite{E}. For some derivations of the Fermi pencil-beam equation we refer to \cite{BL1}, while \cite{BL2} formally investigates the accuracy of this model with respect to Fokker-Planck and linear transport.

To analyze the accuracy of the model, it is convenient to link the Fermi pencil-beam and Fokker-Planck equation through a precise choice of local coordinates on the unit sphere and a rescaling of variables at the level of the diffusion, thus depending on $\epsilon$. This is done by means of {\em stereographic coordinates} (see \cite[p.35]{Lee}) from the south pole $S=(0,\dots,0,-1)$, given by the map $\mathcal{S}:\SS^{n-1}\backslash\{S\}\to\RR^{n-1}$ such that
\begin{equation}\label{def:sproj}
\mathcal{S}(\theta) := \Big(\frac{\theta_1}{1+\theta_n},\dots,\frac{\theta_{n-1}}{1+\theta_n}\Big),
\end{equation}
and with inverse $\mathcal{J} := \mathcal{S}^{-1}:\RR^{n-1}\to \SS^{n-1}\backslash\{S\}$,
\begin{equation}\label{def:isproj}
\mathcal{J}(v) = \Big(\frac{2v}{\langle v\rangle^2},\frac{1-|v|^2}{\langle v\rangle^2}\Big),\quad \langle v\rangle := (1+|v|^2)^{1/2}.
\end{equation}
Under these coordinates the local representation of the metric on the sphere is
\begin{equation*}
\mathring{g}=c^2(v)dx^2,\quad\text{with}\quad c(v):= \frac{2}{\langle v\rangle ^2},
\end{equation*}
while the volume form $d\theta$, the gradient $\nabla_\theta$ and the Laplace-Beltrami operator are respectively given by
\begin{equation*}
c^{n-1}(v)dv,\quad c^{-2}(v)\nabla_v,\quad\text{and}\quad c^{-(n-1)}(v)\nabla_v\cdot c^{n-3}(v)\nabla_v.
\end{equation*}
The coupling of stereographic coordinates with an $\epsilon$-rescaling of the transversal and angular variables defines the {\em stretched coordinate system}. Namely, given macroscopic coordinates $(x',x^n,\theta)$ in $\RR^n_+\times\SS^{n-1}$ we denote by 
\begin{equation*}
(X',X^n,V)\in \RR^{n-1}\times\RR_+\times\RR^{n-1}
\end{equation*}
the stretched local coordinates defined by the relation
\begin{equation*}
(x',x^n,\theta) = (2\epsilon X',X^n,\mathcal{J}(\epsilon V)).
\end{equation*}
This map is a diffeomorphism between $\RR^n_+\times(\SS^{n-1}\backslash \{S\})$ and $\RR^n_+\times\RR^{n-1}=:\mathcal{Q}$. 

Given the solution $u$ to \eqref{FP}-\eqref{FP_bc} with sources $f=0$ and formally $g = \delta(x)\delta(\theta-\eta)$, for $|\eta-N|=O(\epsilon^2)$ (here $N:=(0,\dots,0,1)$), its {\em pencil-beam approximation} takes the form
\begin{equation}\label{eq:FPBsol}
\mathfrak{u}(x,\theta) := (2\epsilon)^{-2(n-1)}U((2\epsilon)^{-1}x',x^n,\epsilon^{-1}\mathcal{S}(\theta)),
\end{equation}
for $U$ solution to the Fermi pencil-beam equation \eqref{eq:FPB} with null interior source, boundary condition $G(X',V)=\delta(X')\delta(V-\epsilon^{-1}\mathcal{S}(\eta))$, and coefficients
\begin{equation*}
\tilde{\sigma}= \frac{1}{4}\sigma(X^n\eta)\quad\text{and}\quad \tilde{\lambda}=\lambda( X^n\eta).
\end{equation*}
A similar approximation can be constructed by superposing pencil-beams if instead $g$ is a  nonnegative and integrable function on $\Gamma_-$. This will be considered later in the paper.\\


Our analysis provides a rigorous comparison of solutions of Fokker-Planck, ballistic linear transport and the pencil-beam approximation.  We establish error estimates in terms of powers of $\epsilon$. Since the solutions model narrow beams and are therefore quite singular, we need a metric that accounts for mass transport at the $\epsilon-$scale. We therefore use a natural setting for comparisons in such instances that is based on a version of the 1-Wasserstein distance.  

For a given relative open and bounded set $\Omega\subset\bar{\RR}^n_+$, we define
\begin{equation*}
BL_{1,\kappa}(\Omega\times\SS^{n-1}):=\{\psi\in C(\bar{\Omega}\times\SS^{n-1}): \|\psi\|_{\infty}\leq 1,\; \Lip(\psi)\leq\kappa\}.
\end{equation*}
The 1-Wasserstein distance between two finite Borel measures in $\Omega\times\SS^{n-1}$, $u(x,\theta)$ and $v(x,\theta)$, is defined as
\begin{equation}\label{eq:WD}
\mathcal{W}^1_{\kappa,\Omega}(u,v):=\sup\left\{\int \psi(x,\theta) (u-v): \psi\in BL_{1,\kappa}(\Omega\times\SS^{n-1})\right\},
\end{equation}
and analogously, if $u$ and $v$ are finite Borel measures defined in the whole space $\bar{\RR}^n_+\times\SS^{n-1}$, we define their 1-Wasserstein distance by replacing $\Omega$ with $\RR^n_+$ above and use the notation $\mathcal{W}^1_{\kappa}=\mathcal{W}^1_{\kappa,\RR^n_+}$. From the finiteness of the measures it is clear that
\begin{equation*}
\lim_{\Omega\to \RR^n_+}\mathcal{W}^1_{\kappa,\Omega}(u,v)=\mathcal{W}^1_{\kappa}(u,v).
\end{equation*}

The same distance was used in \cite{BJ} in the context of inverse transport theory to model possible mis-alignments in the sources used in the probing as well as detector blurring of singular solutions. Consider two points $a\neq b$, two distributions $\delta_a(x)=\delta(x-a)$ and $\delta_b(x)=\delta(x-b)$, an approximation to the identity $\varphi^\epsilon$, and regularizations in $L^1$, $\varphi^\epsilon_a(x) = \varphi^\epsilon(x-a)$ and $\varphi^\epsilon_b(x) = \varphi^\epsilon(x-b)$. For small enough $\epsilon$, $\|\varphi^\epsilon_a-\varphi^\epsilon_b\|_{L^1}=O(1)$ independent of the distance between $a$ and $b$ while $\mathcal{W}^1_\kappa(\delta_a,\varphi^\epsilon_a) = O(\epsilon\kappa)$ and $\mathcal{W}^1_\kappa(\delta_a,\delta_b)=\kappa|a-b|$ when $a$ and $b$ are close enough. Here, $\kappa$ may be interpreted as a confidence level in our measurements to distinguish between points $a$ and $b$, as well as the resolution we use to differentiate between measures. For the same reasons, this distance is a natural tool to quantify the spreading of narrowly focused beams. 
\\

The approximation by ballistic transport and Fermi pencil beams is interesting only in the setting of narrow beams. The $\epsilon-$broadening of an already ``broad" solution is hardly observable. We thus need to consider solutions with measure-valued boundary conditions. This poses no difficulty for the ballistic transport and Fermi pencil beam equations as those are explicitly solved by means of, e.g., Fourier transforms. We develop in this paper the appropriate theory for the Fokker-Planck equation to handle singular boundary conditions in the form of Radon measures. 
We obtain a unique solution $u$ to \eqref{FP}-\eqref{FP_bc} with internal and boundary sources given by finite compactly supported measures. In the particular cases of $f=0$ and $g(x,\theta)=\delta_{x_0}(x)\delta_{\eta}(\theta)$, for $|\eta+\nu(x_0)|=O(\epsilon^2)$, or $g(x,\theta)\in L^1(\partial\RR^n_+\times\SS^{n-1})$ such that $g\geq 0$ and $\|g\|_{L^1}=1$, compactly supported around $(x_0,-\nu(x_0))$ inside $\partial\RR^n_+\times\{|\theta+\nu(x_0)|<C\epsilon^2\}$, we find a second order approximation $\mathfrak{u}(x,\theta)$ by superposing delta-generated pencil-beams of the form \eqref{eq:FPBsol}. 
The corresponding ballistic transport solution of \eqref{eq:BT} is denoted by $v(x,\theta)$. 
In this context, the main result of this paper reads as follows. 
\begin{theorem}\label{thm:error_est_n3}
For dimension $n\geq 2$, there exist constants $C,C'>0$ (depending on $n$, $\sigma$ and $\lambda$) such that for any $\kappa\gtrsim 1$,
\begin{equation*}
\mathcal{W}^1_{\kappa}(u,v)\leq C\kappa\epsilon\quad\text{and}\quad \mathcal{W}^1_{\kappa}(u,\mathfrak{u})\leq C'\epsilon^2\kappa.
\end{equation*}
In addition, if the resolution parameter is taken such that $\kappa \approx \epsilon^{-1}$, then 
\begin{equation*}
\frac{1}{C}\leq \mathcal{W}^1_{\kappa}(u,v)\leq C,\quad\text{and}\quad \mathcal{W}^1_{\kappa}(u,\mathfrak{u})\leq C'\epsilon.
\end{equation*}
\end{theorem}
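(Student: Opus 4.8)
The plan is to establish the two basic estimates $\mathcal{W}^1_\kappa(u,v)\leq C\kappa\epsilon$ and $\mathcal{W}^1_\kappa(u,\mathfrak{u})\leq C'\kappa\epsilon^2$, and then deduce the remaining claims by specializing $\kappa\approx\epsilon^{-1}$ together with a matching lower bound for $\mathcal{W}^1_\kappa(u,v)$. Throughout I would work from the duality definition \eqref{eq:WD}: to bound $\mathcal{W}^1_\kappa(u,w)$ it suffices to bound $\int \psi\,(u-w)$ uniformly over test functions $\psi\in BL_{1,\kappa}$. The natural device is to test the Fokker--Planck equation against a solution of the \emph{backward} Fokker--Planck equation with terminal/boundary data $\psi$ on $\Gamma_+$: if $\phi$ solves $P^t\phi=0$ in $Q$ with $\phi=\psi$ on $\Gamma_+$, then pairing with $u$ and integrating by parts converts $\int\psi\,u$ over the outgoing boundary into the boundary integral $\int_{\Gamma_-}(\theta\cdot\nu)\,g\,\phi$, i.e. into an integral of $\phi$ against the (measure-valued) incoming data $g$. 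The same manipulation applied to $v$ (ballistic) and to $\mathfrak u$ (the superposed Fermi pencil beams) reduces all three quantities to integrals of the respective adjoint solutions against the \emph{same} data $g$, supported near $(x_0,-\nu(x_0))$ in a $C\epsilon^2$-neighborhood in angle. Hence the differences $\int\psi(u-v)$ and $\int\psi(u-\mathfrak u)$ become $\int_{\Gamma_-}(\theta\cdot\nu)\,g\,(\phi_{\mathrm{FP}}-\phi_{\mathrm{bal}})$ and $\int_{\Gamma_-}(\theta\cdot\nu)\,g\,(\phi_{\mathrm{FP}}-\phi_{\mathrm{PB}})$, and the whole problem is to compare adjoint solutions along a single pencil, \emph{uniformly} over Lipschitz data $\psi$ with $\|\psi\|_\infty\le1$, $\Lip(\psi)\le\kappa$.

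The core estimate is therefore: for $\psi\in BL_{1,\kappa}$, the backward Fokker--Planck solution $\phi_{\mathrm{FP}}$ and the backward ballistic solution $\phi_{\mathrm{bal}}$ (resp. the backward Fermi pencil-beam solution $\phi_{\mathrm{PB}}$, pulled back through the stretched/stereographic coordinates) differ by $O(\kappa\epsilon)$ (resp. $O(\kappa\epsilon^2)$) at the incoming point $(x_0,\eta)$ with $|\eta+\nu(x_0)|=O(\epsilon^2)$. I would obtain this by an energy/Duhamel argument: write $w:=\phi_{\mathrm{FP}}-\phi_{\mathrm{bal}}$, which satisfies a backward transport equation with right-hand side $-\epsilon^2\sigma\Delta_\theta\phi_{\mathrm{bal}}$ and zero data on $\Gamma_+$, so $w$ is represented by integrating $\epsilon^2\sigma\Delta_\theta\phi_{\mathrm{bal}}$ along backward characteristics. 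The ballistic adjoint solution is an explicit exponential-damped pullback of $\psi$ along straight characteristics, so $\Delta_\theta\phi_{\mathrm{bal}}$ is controlled by two angular derivatives of $\psi$ composed with the characteristic flow; a single $\theta$-derivative costs $\Lip(\psi)\lesssim\kappa$, and because the characteristic map $\theta\mapsto$ (exit point) has derivative $O(1)$ over the bounded travel distances involved, one gains that $\|\nabla_\theta\phi_{\mathrm{bal}}\|_\infty\lesssim\kappa$ and, after the coordinate rescaling that turns $\Delta_\theta$ into $\epsilon^{-2}$-scaled flat Laplacian pieces, the net contribution is $O(\epsilon\kappa)$. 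For the pencil-beam comparison one instead subtracts the Fermi model from Fokker--Planck \emph{in the stretched coordinates}: the residual operator is the difference between $c^{-(n-1)}\nabla_v\cdot c^{n-3}\nabla_v$ and the flat Laplacian $\Delta_V$ plus the defect between $\theta\cdot\nabla_x$ and $V\cdot\nabla_{X'}+\partial_{X^n}$, both of which are $O(\epsilon^2)$ relative errors on the relevant $O(1)$-in-$V$ region since $c(\epsilon V)=1+O(\epsilon^2|V|^2)$ and $\theta=\mathcal J(\epsilon V)=(0,\dots,0,1)+O(\epsilon)$; feeding this $O(\epsilon^2)$ defect through the (now explicit, Gaussian) Fermi pencil-beam propagator and re-using the $\Lip(\psi)\le\kappa$ bound yields $O(\epsilon^2\kappa)$. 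This is where I expect the genuine work to lie: controlling $\Delta_\theta\phi$ (two angular derivatives of the adjoint solution) uniformly in $\epsilon$ requires the subelliptic/De Giorgi-type regularity results cited in the introduction, and one must check that the implied constants do not degenerate as $\epsilon\to0$ — essentially, that one can commute one angular derivative past the Fokker--Planck semigroup at bounded cost and absorb the second derivative into the $\epsilon^2$ diffusion via an energy inequality, so that the Gronwall constant over a pencil of length $O(1)$ stays bounded.

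Granting the two base estimates, the second pair of claims is almost formal. Setting $\kappa\approx\epsilon^{-1}$ in $\mathcal{W}^1_\kappa(u,v)\le C\kappa\epsilon$ gives the upper bound $\mathcal{W}^1_\kappa(u,v)\le C$, and in $\mathcal{W}^1_\kappa(u,\mathfrak u)\le C'\kappa\epsilon^2$ gives $\mathcal{W}^1_\kappa(u,\mathfrak u)\le C'\epsilon$. The only new ingredient is the lower bound $\mathcal{W}^1_\kappa(u,v)\ge 1/C$: here I would exhibit a single admissible test function $\psi$ — for instance a smooth bump of height $O(1)$ and Lipschitz constant $O(\kappa)=O(\epsilon^{-1})$ concentrated on the $O(\epsilon)$-wide region where the true beam $u$ has already spread transversally but the ballistic $v$ has not (a half-space indicator smoothed at scale $\epsilon$, evaluated at a fixed macroscopic depth $x^n$) — and compute that $\int\psi\,u - \int\psi\,v$ is bounded below independently of $\epsilon$, using the known transversal variance $\sim\epsilon^2$ of the Fokker--Planck/Fermi beam at depth $O(1)$ versus the zero transversal spread of $v$. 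The heuristic already recorded after \eqref{eq:WD} (that $\mathcal{W}^1_\kappa(\delta_a,\varphi^\epsilon_a)=O(\epsilon\kappa)$ and saturates when $\epsilon\kappa\sim1$) is exactly the mechanism that makes this lower bound sharp, so the estimate is tight at $\kappa\approx\epsilon^{-1}$.
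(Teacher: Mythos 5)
Your overall plan — duality via the backward Fokker--Planck equation, passage to stretched stereographic coordinates to compare the operators, then specialization $\kappa\approx\epsilon^{-1}$ plus a lower bound — matches the paper's architecture, but two of the steps as you describe them would not close, and they are precisely the two places where the paper does something different.

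First, your comparison of $u$ with $v$ rests on the Duhamel representation of $\phi_{\mathrm{FP}}-\phi_{\mathrm{bal}}$ with source $-\epsilon^2\sigma\Delta_\theta\phi_{\mathrm{bal}}$, and then on controlling $\Delta_\theta\phi_{\mathrm{bal}}$. But $\phi_{\mathrm{bal}}$ is the adjoint ballistic solution, an integral of $\psi$ along characteristics, and $\psi$ is only Lipschitz with constant $\kappa$; two angular derivatives do not exist as bounded functions, so there is no $\|\Delta_\theta\phi_{\mathrm{bal}}\|_\infty\lesssim\kappa$ to use, and the claim that a ``coordinate rescaling turns $\Delta_\theta$ into an $\epsilon^{-2}$-scaled flat Laplacian'' at $O(\epsilon\kappa)$ cost substitutes a scaling heuristic for a missing analytic estimate. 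The paper avoids differentiating the test function twice at any point: it never compares $u$ with $v$ directly; instead it compares $v$ with $\mathfrak u$, both of which are given by closed formulas (in Lemma \ref{lemma:upp_bound}), exploiting the identity $\int U_{y,\eta}\,dX'dV = e^{-\int_0^{X^n}\lambda}$ which makes the difference of the two a pure transport-term mismatch of size $O(\epsilon\kappa)$, and then obtains $\mathcal W^1_\kappa(u,v)\le\mathcal W^1_\kappa(u,\mathfrak u)+\mathcal W^1_\kappa(\mathfrak u,v)$ by the triangle inequality.

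Second, in your $u$ versus $\mathfrak u$ comparison you correctly identify that the operator defect in stretched coordinates is $O(\epsilon^2)$, but you locate the difficulty in ``controlling $\Delta_\theta\phi$ uniformly in $\epsilon$ via subelliptic estimates.'' That is not where the paper's work is, and in fact the subelliptic theory does not give $\epsilon$-uniform control of second angular derivatives. The paper sidesteps this entirely by integrating by parts so that all derivatives land on the \emph{explicit, Gaussian} pencil-beam solution $\mathfrak u$, never on the adjoint Fokker--Planck solution $\varphi$; the errors $J_1$ are then bounded by $L^1$ moments of $U$ and its derivatives (Lemma \ref{lemma:U_integrals}). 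The genuine obstacle is the Taylor-remainder term $J_2$, which contains $\sigma(X^n\eta)-\sigma(2\epsilon X',X^n)$ and whose estimation at order $\epsilon^2$ requires a \emph{Lipschitz} modulus of the adjoint solution $\Phi$ in the transverse variables; the regularity theory of Section \ref{subsec:regularity} gives only H\"older continuity. The paper resolves this with Lemma \ref{lemma:ord1_approx}: replace $\Phi$ by the backward Fermi pencil-beam solution $W$, which is $O(\epsilon)$-close in the relevant pairing and genuinely Lipschitz by Lemma \ref{lemma:FPb_Lip}. Your proposal has no counterpart to this substitution and would stall at an $O(\epsilon^{1+\alpha}\kappa)$ bound with $\alpha<1$ the H\"older exponent, rather than $O(\epsilon^2\kappa)$. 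Finally, your lower bound is gestured at the right phenomenon, but a direct lower bound on $\int\psi(u-v)$ is awkward because $u$ is not explicit; the paper again routes through $\mathcal W^1_\kappa(v,\mathfrak u)$, computes explicitly with the Gaussian kernel (Lemma \ref{lemma:lower_bound}), and subtracts the already-proved $O(\epsilon)$ bound on $\mathcal W^1_\kappa(u,\mathfrak u)$ by the triangle inequality.
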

The last statement reflects the reasonable fact that at the $\epsilon-$scale of the beam, the ballistic transport solution $v$ does not accurately model the beam spreading. The Fermi pencil beam approximation, however, is still quite accurate for small $\epsilon$. \\

The rest of the paper is structured as follows. Section \ref{sec:L2theory} studies the well-posedness of the forward and backward Fokker-Planck equation in the $L^2$-setting based on a representation theorem that goes back to the work \cite{L}. Continuity of solution with respect to the data is given in the $L^2$ and $L^\infty$ settings. We then consider H\"older and Sobolev regularity, which are obtained by means of a local normal form of \eqref{FP}. The former regularity is a direct consequence of recent developments in the theory of ultra-parabolic equations, while the latter follows from a well-known commutator technique introduced in the work on subelliptic estimates \cite{K}. These results are collected in Section \ref{sec:rough_data} to conclude, following a duality argument, on the existence and uniqueness result for \eqref{FP} for rough sources and continuous coefficients. All necessary results on the the fundamental solution of the Fermi pencil-beam equation are presented in Section \ref{sec:Fermi}. Section  \ref{sec:1W_analysis} performs the approximation analysis using the 1-Wasserstein distance and states the main approximation result of this work, Theorem \ref{thm:error_est_n3}.

\section{Wellposedness and regularity of time-independent Fokker-Planck}\label{sec:L2theory}
We first establish existence and uniqueness of weak solutions based on a representation theorem for Hilbert spaces introduced in \cite{L}. Continuity with respect to the boundary and interior sources is also proven in the $L^2$-setting for general square integrable functions $f$ and $g$, while for vanishing boundary sources we also prove continuity with respect to $f$ in the $L^\infty$-norm. We then move to the question of regularity of solutions for which we introduce a (local) normal form for equation \eqref{FP}, and follow two different paths. As a direct application of recent developments on the study of Kolmogorov-type equations and more generally ultra-parabolic equations, we first deduce H\"older continuity of solutions under very weak assumptions on the coefficients. We then obtain an interior regularity result in Sobolev spaces by applying a well-known commutator technique that yields the necessary subelliptic estimates. Before going into the analysis a few remarks need to be taken into consideration regarding the results of this section.

\begin{remark}
(1) Everything we present in this section also holds for the backward Fokker-Planck equation and the proofs are identical but for minor differences such as interchanging the incoming boundary $\Gamma_-$ with the outgoing one $\Gamma_+$.
(2) In all the results presented in this section, the domain $\RR^n_+\times\SS^{n-1}$ may be replaced by any set of the form $\Omega\times\SS^{n-1}$, for $\Omega$ a bounded domain with sufficiently regular boundary. (3) The assumption of $\lambda_0>0$ is assumed to simplify the analysis and obtain global and integrable solutions on the whole half space.  Such an assumption would not be necessary on a bounded domain as long as Poincar\'e-type estimates are provided, something we do not consider here.
\end{remark}

We define $Q:= \RR^n_+\times\SS^{n-1}$ and write $L^{k}(\Gamma_{\pm})$, $k=1,2$, to denote the space of $L^{k}$ functions at the boundary sets $\Gamma_{\pm}$ endowed with the measure $|\theta\cdot\nu(x)|dS(x)d\theta$. Here, $d\theta $ stands for the volume form on $\SS^{n-1}$ while $dS(x)=dx'$ is the surface measure on $\partial\RR^n_+$. We also denote by $ L^2_{loc}(\bar{Q})$ the space of functions $u$ so that $\chi u\in L^2(\bar{Q})$ for any $\chi\in C^\infty_c(\RR^n)$.\\

\subsection{$L^2$-global theory and other properties} 
Consider the following Hilbert space
\begin{equation*}
\mathcal{H} := \{u\in L^2(Q): \nabla_\theta u\in L^2(Q)\}
\end{equation*}
with norm $\|u\|_{\mathcal{H}}^2 := \|u\|_{L^2}^2 + \epsilon^2\|\nabla_\theta u\|_{L^2}^2$ and $\mathcal{H}'$ its dual space. We denote by $T$ the transport operator $Tu = \theta \cdot\nabla_xu$. We look for solutions in the sub- Hilbert space 
\begin{equation*}
\mathcal{Y} := \{u\in \mathcal{H}:Tu\in \mathcal{H}'\} \text{ with norm }\|u\|_{\mathcal{Y}}^2:= \|u\|_{\mathcal{H}}^2 + \|Tu\|_{\mathcal{H}'}^2.
\end{equation*}

\begin{definition}\label{def:weak_sol}
A function $u\in\mathcal{Y}$ is a weak solution of \eqref{FP}-\eqref{FP_bc} if for all $\varphi\in C^\infty(\bar{Q})$ such that $\varphi = 0$ on $\Gamma_+$, it satisfies
\begin{equation}\label{weak_sol}
\begin{aligned}
&\int_{Q} \big(\epsilon^2\sigma(x)\nabla_\theta  u\cdot\nabla_\theta \varphi-u(\theta \cdot\nabla_x\varphi) + \lambda u\varphi \big)dxd\theta \\ 
&\hspace{8em}= \int_{Q}f\varphi dxd\theta + \int_{\Gamma_-}g\varphi |\theta\cdot\nu(x)|dS(x)d\theta.
\end{aligned}
\end{equation}
\end{definition}

\begin{theorem}[Wellposedness]\label{thm:wellposednessFP}
For any $f\in L^2(Q)$ and $g\in L^2(\Gamma_-)$, there exists a unique weak solution $u\in\mathcal{Y}$ to \eqref{FP}-\eqref{FP_bc}, which for some constant $C=C(\|\sigma\|_\infty,\sigma_0,\lambda_0)>0$ satisfies
\begin{equation*}
\|u\|_{\mathcal{Y}}\leq C\big(\|f\|_{L^2}+\|g\|_{L^2}\big).
\end{equation*}
Furthermore, the traces of $u$ on $\Gamma_-$ and $\Gamma_+$ are well defined, with $u|_{\Gamma_-} = g$ and $u|_{\Gamma_+}\in L^2(\Gamma_+)$. 
\end{theorem}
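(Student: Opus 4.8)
The plan is to apply the abstract representation theorem of Lions (reference \cite{L} in the paper) in the bilinear form setting. I would first set up the right function spaces: the "trial" space is $\mathcal{Y}$ (or a dense subspace thereof), while the "test" space $\mathcal{F}$ consists of smooth functions $\varphi$ on $\bar Q$ vanishing on $\Gamma_+$, equipped with the norm $\|\varphi\|_{\mathcal F}^2 := \|\varphi\|_{\mathcal H}^2$ (the full $\mathcal{H}$-norm, \emph{not} the $\mathcal{Y}$-norm — this asymmetry is the key to the method). I would define the bilinear form
\begin{equation*}
E(u,\varphi) := \int_Q\big(\epsilon^2\sigma\nabla_\theta u\cdot\nabla_\theta\varphi - u\,\theta\cdot\nabla_x\varphi + \lambda u\varphi\big)\,dxd\theta,
\end{equation*}
and the linear functional $L(\varphi) := \int_Q f\varphi + \int_{\Gamma_-} g\varphi\,|\theta\cdot\nu|\,dS d\theta$. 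Lions' theorem then guarantees existence of $u\in\mathcal H$ solving $E(u,\varphi)=L(\varphi)$ for all $\varphi\in\mathcal F$ provided two conditions hold: (i) $\varphi\mapsto E(u,\varphi)$ is continuous on $\mathcal F$ for each fixed $u$ (trivial here from Cauchy-Schwarz and boundedness of $\sigma,\lambda$), and (ii) the crucial \emph{weak coercivity / inf-sup} estimate $E(\varphi,\varphi)\geq\alpha\|\varphi\|_{\mathcal F}^2$ for all $\varphi\in\mathcal F$.

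For step (ii), I would compute $E(\varphi,\varphi) = \epsilon^2\int_Q\sigma|\nabla_\theta\varphi|^2 + \int_Q\lambda\varphi^2 - \int_Q \varphi\,\theta\cdot\nabla_x\varphi$. The last term is $-\frac12\int_Q\theta\cdot\nabla_x(\varphi^2) = -\frac12\int_{\partial\RR^n_+\times\SS^{n-1}}\theta\cdot\nu\,\varphi^2\,dSd\theta$ after integration by parts (the transport field is divergence-free in $x$). Splitting the boundary integral over $\Gamma_-$ and $\Gamma_+$ and using $\varphi|_{\Gamma_+}=0$, this equals $+\frac12\int_{\Gamma_-}|\theta\cdot\nu|\varphi^2\,dSd\theta\geq 0$. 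Hence $E(\varphi,\varphi)\geq \epsilon^2\sigma_0\|\nabla_\theta\varphi\|_{L^2}^2 + \lambda_0\|\varphi\|_{L^2}^2 \geq \min(\sigma_0,\lambda_0)\|\varphi\|_{\mathcal H}^2$, giving coercivity with $\alpha = \min(\sigma_0,\lambda_0)$. This yields a solution $u\in\mathcal H$ with $\|u\|_{\mathcal H}\leq\alpha^{-1}(\text{norm of }L)$, and the functional norm of $L$ is controlled by $\|f\|_{L^2}+\|g\|_{L^2}$ via the trace/Cauchy-Schwarz bounds on $\mathcal H$ (a Hardy-type or trace inequality is needed to bound $\int_{\Gamma_-}g\varphi$ by $\|g\|_{L^2(\Gamma_-)}\|\varphi\|_{\mathcal H}$; since at this stage $\varphi$ is smooth this is standard, but it should be stated carefully).

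Next I would upgrade $u\in\mathcal H$ to $u\in\mathcal Y$: testing \eqref{weak_sol} against $\varphi\in C^\infty_c(Q)$ shows $Tu = f + \epsilon^2\nabla_\theta\cdot(\sigma\nabla_\theta u) - \lambda u$ holds distributionally, and the right side lies in $\mathcal H'$ with norm $\lesssim \|f\|_{L^2}+\|u\|_{\mathcal H}$, so $\|Tu\|_{\mathcal H'}$ is controlled and $u\in\mathcal Y$ with the claimed estimate. For uniqueness I would take $f=g=0$, and — since the solution now lies in $\mathcal Y$ where $Tu$ pairs with $u$ — use a trace theorem for the space $\mathcal Y$ (functions with $Tu\in\mathcal H'$ admit $L^2_{loc}$ traces on $\Gamma_\pm$ with a Green's identity $\langle Tu,u\rangle = \frac12\int_{\Gamma_+}|\theta\cdot\nu|u^2 - \frac12\int_{\Gamma_-}|\theta\cdot\nu|u^2$); plugging $u$ itself as a test function (justified by density of smooth functions in $\mathcal Y$, which itself requires an approximation argument) gives $\epsilon^2\int\sigma|\nabla_\theta u|^2 + \int\lambda u^2 + \frac12\int_{\Gamma_+}|\theta\cdot\nu|u^2 = 0$, forcing $u\equiv 0$ and simultaneously $u|_{\Gamma_+}=0$. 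Finally the trace statements $u|_{\Gamma_-}=g$, $u|_{\Gamma_+}\in L^2(\Gamma_+)$ follow from the same Green's identity applied to the original (non-homogeneous) problem.

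\textbf{Main obstacle.} The delicate points are not the coercivity computation (which is clean) but the \emph{functional-analytic plumbing}: establishing that smooth functions vanishing on $\Gamma_+$ are dense in the appropriate subspace of $\mathcal Y$, that the trace operators $u\mapsto u|_{\Gamma_\pm}$ are well-defined and continuous from $\mathcal Y$ (or from $\{u\in L^2: Tu\in\mathcal H'\}$) into weighted $L^2$ spaces on $\Gamma_\pm$, and that Green's identity holds in this low-regularity setting. These trace results for transport-type operators with an anisotropic companion space $\mathcal H'$ are somewhat technical — one typically proves them by a combination of mollification in $x$ along characteristics, the DiPerna–Lions renormalization idea, or direct integration along the flow of $\theta\cdot\nabla_x$. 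I would expect to either cite an existing transport-trace lemma or devote a short lemma to it before invoking it in the uniqueness and trace parts of the proof.
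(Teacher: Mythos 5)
Your overall strategy mirrors the paper's: apply Lions' representation theorem, verify coercivity by integrating the transport term by parts, upgrade to $\mathcal Y$, and use the Green's identity for uniqueness and traces. However, there is a genuine flaw in the choice of test-space norm that breaks the argument whenever $g\neq 0$.

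You equip $\mathcal F$ with just $\|\varphi\|_{\mathcal F}^2=\|\varphi\|_{\mathcal H}^2$ and then claim that $L(\varphi)=\int_Q f\varphi+\int_{\Gamma_-}g\varphi\,|\theta\cdot\nu|\,dSd\theta$ is continuous for this norm, invoking a ``Hardy-type or trace inequality'' to bound $\int_{\Gamma_-}g\varphi$ by $\|g\|_{L^2(\Gamma_-)}\|\varphi\|_{\mathcal H}$. No such inequality can hold: the space $\mathcal H$ carries only $L^2$ control in $x$ together with angular derivatives, and has \emph{no} spatial regularity at all, so restriction to $\partial\RR^n_+$ is unbounded on $\mathcal H$. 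A concrete counterexample: take $\varphi(x,\theta)=\chi(x^n/\delta)\,\eta(\theta)$ with $\chi$ a cutoff equal to $1$ at $0$ and supported in $[0,1]$, and $\eta$ supported in $\{\theta_n>0\}$ so that $\varphi$ vanishes identically on $\Gamma_+$; then $\|\varphi\|_{\mathcal H}=O(\delta^{1/2})\to 0$ while $\|\varphi\|_{L^2(\Gamma_-;|\theta\cdot\nu|dSd\theta)}=O(1)$. So $L$ is unbounded on $(\mathcal F,\|\cdot\|_{\mathcal H})$ and Lions' theorem does not apply.

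The fix is exactly the one the paper uses: the prehilbertian norm on $\mathcal F$ must \emph{include} the boundary term,
\begin{equation*}
|\varphi|_{\mathcal F}^2:=\|\varphi\|_{\mathcal H}^2+\tfrac12\|\varphi\|_{L^2(\Gamma_-;|\theta\cdot\nu(x)|dSd\theta)}^2.
\end{equation*}
Then $L$ is trivially bounded on $\mathcal F$ by $\|f\|_{L^2}+\|g\|_{L^2(\Gamma_-)}$, and your coercivity computation---which produced $a(\varphi,\varphi)=\epsilon^2\int\sigma|\nabla_\theta\varphi|^2+\int\lambda\varphi^2+\frac12\int_{\Gamma_-}\varphi^2|\theta\cdot\nu|$---already yields $a(\varphi,\varphi)\geq\min\{1,\sigma_0,\lambda_0\}\,|\varphi|_{\mathcal F}^2$ with the \emph{same} boundary term appearing, which is precisely why this asymmetric space works. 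Note that you actually computed that boundary term correctly and observed it is nonnegative, but then discarded it; retaining it is what makes $L$ continuous. The remainder of your outline (upgrading to $\mathcal Y$, traces on $\Gamma_\pm$ via density of $C^\infty_c(\bar Q\setminus\Gamma_0)$ and the Green's identity, and uniqueness by pairing $u$ with $Tu$) matches the paper's argument; the paper indeed cites the density result for $\tilde{\mathcal Y}$ rather than proving it, so your identification of that step as the technical plumbing is fair.
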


The main ingredient in the proof of existence is given by the next theorem. The rest of the proof follows the approach of \cite{DM-G} (see also \cite{C}).
\begin{theorem}[Lions \cite{L}]\label{thm:Lions} Let $\mathcal{H}$ be a Hilbert space provided with the inner product $(\cdot,\cdot)_{\mathcal{H}}$ and the norm $\|\cdot\|_{\mathcal{H}}$. Let $\mathcal{F}\subset\mathcal{H}$ be a subspace provided with a prehilbertian norm $|\cdot|$, such that the injection of $\mathcal{F}$ into $\mathcal{H}$ is continuous. Let us consider a bilinear form $a:\mathcal{H}\times\mathcal{F}\to\RR$, such that $a(\cdot,\varphi)$ is continuous on $\mathcal{H}$ for any $\varphi\in\mathcal{F}$ and such that $a(\varphi,\varphi)\geq \alpha|\varphi|^2$ for any $\varphi\in\mathcal{F}$ with $\alpha>0$. Then, given a linear form $L\in \mathcal{F}'$ continuous with the norm $|\cdot|$, there exists a solution $u\in\mathcal{H}$ of the problem: $a(u,\varphi)=L(\varphi)$ for any $\varphi\in\mathcal{F}$.
\end{theorem}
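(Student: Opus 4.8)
The plan is to reduce the statement to two applications of the Riesz representation theorem combined with one use of Hahn--Banach, which is the classical route for this ``non-symmetric'' variational principle. \textbf{Step 1 (represent the form).} For each fixed $\varphi\in\mathcal{F}$, the map $w\mapsto a(w,\varphi)$ is, by hypothesis, a continuous linear functional on $\mathcal{H}$; Riesz representation then yields a unique $A\varphi\in\mathcal{H}$ with $a(w,\varphi)=(w,A\varphi)_{\mathcal{H}}$ for all $w\in\mathcal{H}$, and $A:\mathcal{F}\to\mathcal{H}$ is linear. In this notation the coercivity hypothesis is $(\varphi,A\varphi)_{\mathcal{H}}\geq\alpha|\varphi|^2$.

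\textbf{Step 2 (the key estimate and the auxiliary functional).} Let $c>0$ be such that $\|\varphi\|_{\mathcal{H}}\leq c|\varphi|$, which expresses the continuity of the injection $\mathcal{F}\hookrightarrow\mathcal{H}$. Cauchy--Schwarz in $\mathcal{H}$ together with coercivity gives $\alpha|\varphi|^2\leq(\varphi,A\varphi)_{\mathcal{H}}\leq\|\varphi\|_{\mathcal{H}}\,\|A\varphi\|_{\mathcal{H}}\leq c\,|\varphi|\,\|A\varphi\|_{\mathcal{H}}$, hence
\[
|\varphi|\leq \tfrac{c}{\alpha}\,\|A\varphi\|_{\mathcal{H}},\qquad \varphi\in\mathcal{F}.
\]
In particular $A$ is injective, so one may define a linear functional $\ell$ on the subspace $A(\mathcal{F})\subset\mathcal{H}$ by $\ell(A\varphi):=L(\varphi)$. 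Since $L$ is continuous for $|\cdot|$, the displayed bound gives $|\ell(A\varphi)|=|L(\varphi)|\leq\|L\|_{\mathcal{F}'}|\varphi|\leq\tfrac{c}{\alpha}\|L\|_{\mathcal{F}'}\|A\varphi\|_{\mathcal{H}}$, so $\ell$ is bounded for the ambient Hilbert norm on $A(\mathcal{F})$.

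\textbf{Step 3 (extend and represent again).} Extend $\ell$ by Hahn--Banach to a bounded linear functional on all of $\mathcal{H}$, with the same bound; equivalently, extend by continuity to $\overline{A(\mathcal{F})}$ and apply Riesz there. Riesz representation then produces $u\in\mathcal{H}$ with $\ell(z)=(u,z)_{\mathcal{H}}$ for every $z\in\mathcal{H}$ and $\|u\|_{\mathcal{H}}\leq\tfrac{c}{\alpha}\|L\|_{\mathcal{F}'}$; evaluating at $z=A\varphi$ yields $a(u,\varphi)=(u,A\varphi)_{\mathcal{H}}=\ell(A\varphi)=L(\varphi)$ for all $\varphi\in\mathcal{F}$, which is the claimed identity. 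The argument is essentially soft, so there is no genuinely hard step; the one place where the hypotheses must interact is the passage from the coercivity estimate — which lives in the strong norm $|\cdot|$ on $\mathcal{F}$ — to a bound on $\|A\varphi\|_{\mathcal{H}}$, and it is precisely the continuous embedding $\mathcal{F}\hookrightarrow\mathcal{H}$ that makes $\ell$ continuous for the Hilbert norm, which is what is needed before invoking Hahn--Banach/Riesz. I would also note explicitly that no uniqueness is asserted (and it generally fails, since $A(\mathcal{F})$ need not be dense in $\mathcal{H}$), so the proof stops at existence.
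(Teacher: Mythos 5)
The paper does not prove Theorem~\ref{thm:Lions}; it is quoted from Lions' book~\cite{L} and used as a black box in the proof of Theorem~\ref{thm:wellposednessFP}, so there is no in-paper argument to compare against. Your rendition is the standard proof and it is correct: Riesz representation encodes $a$ through an operator $A\colon\mathcal{F}\to\mathcal{H}$ with $a(w,\varphi)=(w,A\varphi)_{\mathcal{H}}$; the coercivity combined with the continuous embedding $\|\varphi\|_{\mathcal{H}}\le c|\varphi|$ gives the a priori bound $|\varphi|\le (c/\alpha)\|A\varphi\|_{\mathcal{H}}$; this makes the functional $A\varphi\mapsto L(\varphi)$ well-defined and bounded on $A(\mathcal{F})$ for the ambient $\mathcal{H}$-norm; and Hahn--Banach (equivalently, Riesz on $\overline{A(\mathcal{F})}$) produces the representing element $u$. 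One small remark worth making explicit: you do not actually need injectivity of $A$ as such to define $\ell$; it is enough that $A\varphi=0$ forces $|\varphi|=0$ via your estimate, and then $L(\varphi)=0$ by the assumed continuity of $L$ for $|\cdot|$. That is the cleaner way to phrase the step, and it also shows the argument survives if the ``prehilbertian norm'' is only a seminorm. Your closing observation that uniqueness is not asserted (and in general fails, since $A(\mathcal{F})$ need not be dense) is apt.
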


\begin{proof}[Proof of Theorem \ref{thm:wellposednessFP}]\quad\\
\noindent{\em 1) Existence.} 
Denote by $\mathcal{F}$ the set of test functions $\varphi\in C^\infty(\bar{Q})$ such that $\varphi = 0$ on $\Gamma_+$, and consider the prehilbertian norm on $\mathcal{F}$,
\begin{equation*}
|\varphi|_\mathcal{F}^2 := \|\varphi\|^2_\mathcal{H}+\frac{1}{2}\|\varphi\|^2_{L^2(\Gamma_-; |\theta\cdot\nu(x)|dS(x)d\theta )}.
\end{equation*}
It is clear that $\mathcal{F}$ is a subspace of $\mathcal{H}$ whose inclusion is continuous. In order to apply Theorem \ref{thm:Lions}, we define the bilinear form $a:\mathcal{H}\times \mathcal{F}\to \RR$ as the left-hand side of \eqref{weak_sol}:
\begin{equation*}
a(u,\varphi) := \int_{Q}\big(\epsilon^2 \sigma(x)\nabla_\theta  u\cdot\nabla_\theta \varphi-u(\theta \cdot\nabla_x\varphi)+ \lambda u\varphi \big)dxd\theta ,
\end{equation*}
and the bounded linear operator $L:\mathcal{F}\to\RR$ as the right-hand side of \eqref{weak_sol}:
\begin{equation*}
L(\varphi) = \int_{Q}f\varphi dxd\theta  + \int_{\Gamma_-}g\varphi |\theta\cdot\nu(x)|dS(x)d\theta. \end{equation*}
Under these notation, a weak solution to the FP equation should satisfies the equality
\begin{equation}\label{aL_eq}
a(u,\varphi)=L(\varphi),\quad\forall \varphi\in \mathcal{F}.
\end{equation}
For an arbitrary $\varphi\in \mathcal{F}$, $a(\cdot,\varphi)$ is a bounded linear operator in $\mathcal{H}$ and moreover $a(\cdot,\cdot)$ is coercive when restricted to $\mathcal{F}\times \mathcal{F}$. Indeed, for any $\varphi\in \mathcal{F}$ we have
\begin{equation*}
\begin{aligned}
a(\varphi,\varphi) &= \int_{Q}\big(\epsilon^2 \sigma(x)\nabla_\theta  \varphi\cdot\nabla_\theta \varphi +\lambda(x)\varphi^2\big)dxd\theta + \frac{1}{2}\int_{\Gamma_-}\varphi^2 |\theta\cdot\nu(x)|dS(x)d\theta \\
&\geq \min\{1,\sigma_0,\lambda_0\}|\varphi|^2_\mathcal{F}.
\end{aligned}
\end{equation*}
Here we used standard integration by parts in $\RR^n$ on the integral of $\varphi(\theta \cdot\nabla_x\varphi)$. It follows from Theorem \ref{thm:Lions} that there exists a weak solution $u\in \mathcal{H}$ satisfying \eqref{aL_eq}. In fact, $u\in\mathcal{Y}$ because $Tu$ is a distribution given by
\begin{equation}\label{trace_dist_def}
\langle Tu,\varphi\rangle =-\int_{Q} \big(\epsilon^2\sigma(x)\nabla_\theta  u\cdot\nabla_\theta \varphi + \lambda u\varphi  -f\varphi \big)dxd\theta  ,\quad \forall \varphi\in C^\infty_c(Q),
\end{equation}
thus $|\langle Tu,\varphi\rangle|\leq C\|\varphi\|_\mathcal{H}$. By density, the previous holds for all $\varphi\in \mathcal{H}$ therefore \eqref{trace_dist_def} defines $Tu\in \mathcal{H}'$ for solutions of \eqref{FP}. In addition we have:
\begin{equation}\label{ineq_opT}
\|Tu\|_{\mathcal{H}'}\leq\max\{\|\sigma\|_\infty,\|\lambda\|_\infty\}\|u\|_{\mathcal{H}}+ \|f\|_{L^2}.
\end{equation}

\noindent{\em 2) Trace.} To show uniqueness, we first need to make sense of the trace of a solution $u$ on $\Gamma_-$ (and $\Gamma_+$), and we do so by a density argument. The subset $\tilde{\mathcal{Y}}=C^\infty_c(\bar{Q}\backslash\Gamma_0)$ of $\mathcal{Y}$ is known to be dense \cite{Ba,C,DM-G}.
Take an arbitrary $\varphi\in\tilde{\mathcal{Y}}$ that vanishes on $\Gamma_+$. Then, Green's identity implies
\begin{equation*}
\|\varphi\|^2_{L^{2}(\Gamma_-;|\theta\cdot\nu(x)|dS(x)d\theta )} = -2\int_{Q}\varphi (T\varphi) dxd\theta \leq 2\|\varphi\|_\mathcal{H}\|T\varphi\|_{\mathcal{H}'}\leq \|\varphi\|^2_{\mathcal{Y}}.
\end{equation*}
Analogously, if $\varphi\in\tilde{\mathcal{Y}}$ vanishes on $\Gamma_-$ instead, we get
\begin{equation*}
\|\varphi\|^2_{L^{2}(\Gamma_+;|\theta\cdot\nu(x)|dS(x)d\theta )} \leq   2\|\varphi\|_\mathcal{H}\|T\varphi\|_{\mathcal{H}'}\leq \|\varphi\|^2_{\mathcal{Y}}.
\end{equation*}
Therefore, since any $\varphi\in\tilde{\mathcal{Y}}$ can be decomposed into $\varphi = \varphi_++\varphi_-$ with $\varphi_{\pm}$ vanishing on $\Gamma_{\pm}$, we deduce
\begin{equation*}
\|\varphi\|_{L^{2}(\Gamma_+\cup\Gamma_-;|\theta\cdot\nu(x)|dS(x)d\theta )} \leq \|\varphi\|_{\mathcal{Y}}.
\end{equation*}
Consequently, the density of $\tilde{\mathcal{Y}}$ implies $u|_{\Gamma_{\pm}}\in L^{2}(\Gamma_{\pm};|\theta\cdot\nu(x)|dS(x)d\theta )$ for all $u\in\mathcal{Y}$. 
In addition, for any pair $\varphi,\psi\in \tilde{\mathcal{Y}}$, 
we have
\begin{equation*}
\int_{Q}\varphi (T\psi) dxd\theta  + \int_{Q}\psi (T\varphi) dxd\theta  = \int_{\Gamma_-\cup\Gamma_+}\varphi\psi (\theta\cdot\nu(x))dS(x)d\theta .
\end{equation*}
By density we can extend the previous identity and write for any $u_1,u_2\in\mathcal{Y}$,
\begin{equation}\label{Greens_id}
\langle u_1,Tu_2\rangle_{\mathcal{H},\mathcal{H}'} + \langle u_2,Tu_1\rangle_{\mathcal{H},\mathcal{H}'} =\int_{\Gamma_-\cup\Gamma_+}u_1u_2(\theta\cdot\nu(x))dS(x)d\theta .
\end{equation}
In particular, if $u_1 = u$ is a weak solution of \eqref{FP} and we take $u_2 = \varphi\in\tilde{\mathcal{Y}}$ such that it vanishes on $\Gamma_+$, then we obtain
\begin{equation*}
0 = \int_{\Gamma_-}(u-g)\varphi |\theta\cdot\nu(x)|dS(x)d\theta,\quad\forall\varphi\in\tilde{\mathcal{Y}}.
\end{equation*}
This follows by recalling that $u$ satisfies \eqref{trace_dist_def} for all $\varphi\in \mathcal{H}$, thus in particular for all $\varphi\in\tilde{\mathcal{Y}}$ vanishing on $\Gamma_+$, and on the other hand, by definition of weak solution, for the same functions $\varphi$
\begin{equation}\label{weak_sol2}
\langle u,T\varphi\rangle_{\mathcal{H},\mathcal{H}'} = \int_{Q} \big(\epsilon^2\sigma(x)\nabla_\theta  u\cdot\nabla_\theta \varphi  + \lambda u\varphi -f\varphi \big)dxd\theta  - \int_{\Gamma_-}g\varphi |\theta\cdot\nu(x)|dS(x)d\theta .
\end{equation}

\noindent{\em 3) Uniqueness.} 
Let $w_1,w_2\in\mathcal{Y}$ be two weak solutions of \eqref{FP}-\eqref{FP_bc}. Therefore, $u=w_1-w_2$ is a solution of the same equation replacing $(f,g)$ by $(0,0)$. We first use the definition of $Tu$ in the distributional sense \eqref{trace_dist_def}, which also holds if we take $\varphi = u\in\mathcal{Y}\subset \mathcal{H}$. Therefore,
\begin{equation*}
\begin{aligned}
\langle u,Tu\rangle_{\mathcal{H},\mathcal{H}'}  &=-\int_{Q} \big(\epsilon^2\sigma(x)\nabla_\theta  u\cdot\nabla_\theta u  + \lambda u^2\big)dxd\theta. 
\end{aligned}
\end{equation*}
On the other hand, by plugging $u_1=u_2=u$ into \eqref{Greens_id} we obtain
\begin{equation*}
2 \langle u,Tu\rangle_{\mathcal{H},\mathcal{H}'}  = \int_{\Gamma_+}u^2 |\theta\cdot\nu(x)|dS(x)d\theta .
\end{equation*}
From the previous two equalities, one deduces
\begin{equation*}
\begin{aligned}
-\frac{1}{2}\int_{\Gamma_+}u^2 |\theta\cdot\nu(x)|dS(x)d\theta  =\int_{Q}\big( \epsilon^2\sigma(x)\nabla_\theta  u\cdot\nabla_\theta u + \lambda u^2\big)dxd\theta  \geq \min\{\sigma_0,\lambda_0\}\|u\|_\mathcal{H}^2, 
\end{aligned}
\end{equation*}
and consequently, since the left-hand side is nonpositive, the above is true only for $u=0$. 

\noindent{\em 4) Continuous dependence.} A weak solution $u$ of \eqref{FP}-\eqref{FP_bc} satisfies $Tu\in\mathcal{H}'$ given by  \eqref{trace_dist_def}. Choosing $\varphi=u$, we get
\begin{equation*}
\begin{aligned}
\langle u,Tu\rangle_{\mathcal{H},\mathcal{H}'}  &=-\int_{Q}\big( \epsilon^2\sigma(x)\nabla_\theta  u\cdot\nabla_\theta u  + \lambda u^2 - fu\big)dxd\theta. 
\end{aligned}
\end{equation*}
This together with  \eqref{Greens_id} applied to $u_1=u_2=u$ implies
\begin{equation}\label{cont_dep_1}
\min\{\sigma_0,\lambda_0\}\|u\|_\mathcal{H}^2\leq \int_{Q} fu dxd\theta + \frac{1}{2}\int_{\Gamma_-}g^2 |\theta\cdot\nu(x)|dS(x)d\theta,
\end{equation}
where the first integral in the right-hand side is bounded as follows
\begin{equation*}
\int_{Q} fu dxd\theta\leq \frac{1}{2}\delta^{-2}\|f\|^2_{L^2(\Omega\times\SS^{n-1})}+\frac{1}{2}\delta^{2}\|u\|^2_{L^2(\Omega\times\SS^{n-1})}.
\end{equation*}
We then choose $\delta>0$ small enough in order to absorb the last term with the left-hand side of \eqref{cont_dep_1}. Combining \eqref{ineq_opT} with the above inequalities concludes the proof.
\end{proof}

\begin{lemma}[Non-negative solutions]\label{lemma:min_pple}
For $f,g$ as in Theorem \ref{thm:wellposednessFP}, if $f,g\geq 0$ a.e., the unique solution to \eqref{FP}-\eqref{FP_bc} satisfies that $u\geq 0$ a.e. and the same holds for its trace on $\Gamma_-$. 
\end{lemma}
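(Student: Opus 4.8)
The plan is to prove nonnegativity by testing the weak formulation against the negative part $u_- = \max\{-u,0\}$ of the solution, a standard truncation/Stampacchia argument adapted to the kinetic setting. First I would need to justify that $u_-$ is an admissible test object: since $u\in\mathcal{Y}$, one has $u_-\in\mathcal{H}$ (the map $t\mapsto t_-$ is Lipschitz and kills constants, so it preserves $L^2$ and $H^1$-in-$\theta$ regularity), and moreover $\nabla_\theta u_- = -\mathbf{1}_{\{u<0\}}\nabla_\theta u$ a.e., with an analogous chain rule for the transport derivative $T u_- = -\mathbf{1}_{\{u<0\}} Tu$ understood in $\mathcal{H}'$. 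The slightly delicate point is that the trace of $u_-$ on $\Gamma_-$ equals $(u|_{\Gamma_-})_- = g_- = 0$ since $g\geq 0$; this uses the trace theory established in part 2) of the proof of Theorem \ref{thm:wellposednessFP} together with the fact that truncation commutes with taking traces (which follows by approximating $u$ in $\mathcal{Y}$ by the dense class $\tilde{\mathcal{Y}}$ and passing to the limit, using continuity of $t\mapsto t_-$).

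Next I would apply Green's identity \eqref{Greens_id} and the distributional definition \eqref{trace_dist_def} of $Tu$ with the pairing against $u_-$. Concretely, using \eqref{trace_dist_def} with test function $u_-$ (valid by density, as $u_-\in\mathcal{H}$),
\begin{equation*}
\langle Tu, u_-\rangle = -\int_Q\big(\epsilon^2\sigma\,\nabla_\theta u\cdot\nabla_\theta u_- + \lambda u u_- - f u_-\big)\,dx\,d\theta,
\end{equation*}
and by the chain rule $\nabla_\theta u\cdot\nabla_\theta u_- = -|\nabla_\theta u_-|^2$ and $u u_- = -u_-^2$, so the right-hand side becomes $\int_Q(\epsilon^2\sigma|\nabla_\theta u_-|^2 + \lambda u_-^2 + f u_-)\,dx\,d\theta$. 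On the other hand, pairing $Tu = -Tu_- + T u_+$ appropriately — more cleanly, applying \eqref{Greens_id} to the pair $(u, u_-)$ and using that $T u$ acting on $u_-$ only sees the region $\{u<0\}$ — yields $2\langle Tu, u_-\rangle = -\int_{\Gamma_+} u_-^2\,|\theta\cdot\nu|\,dS\,d\theta$ (the $\Gamma_-$ boundary term drops because $u_-|_{\Gamma_-}=0$), so $\langle Tu,u_-\rangle \le 0$. Combining the two identities gives
\begin{equation*}
\int_Q\big(\epsilon^2\sigma|\nabla_\theta u_-|^2 + \lambda u_-^2\big)\,dx\,d\theta + \int_Q f u_-\,dx\,d\theta + \tfrac12\int_{\Gamma_+}u_-^2|\theta\cdot\nu|\,dS\,d\theta = 0.
\end{equation*}
Since $f\ge 0$ and $u_-\ge 0$, every term on the left is nonnegative, forcing $\min\{\sigma_0,\lambda_0\}\|u_-\|_{\mathcal{H}}^2 \le 0$, hence $u_-=0$ a.e., i.e. $u\ge 0$ a.e.; and the trace term forces $u_-|_{\Gamma_+}=0$, while $u_-|_{\Gamma_-}=0$ was already known, so the trace on $\Gamma_-$ is nonnegative as well (in fact it equals $g\ge 0$ by Theorem \ref{thm:wellposednessFP}).

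The main obstacle I anticipate is the rigorous chain rule for the transport term, namely that $T u_- = -\mathbf{1}_{\{u<0\}}Tu$ as elements of $\mathcal{H}'$ and that the boundary pairing behaves correctly — this is where one must be careful, since $Tu$ lives only in the dual space $\mathcal{H}'$ and $u_-$ is merely in $\mathcal{H}$, so a naive pointwise manipulation is not justified. The clean way around this is to work with the dense subspace $\tilde{\mathcal{Y}} = C^\infty_c(\bar Q\setminus\Gamma_0)$: approximate $u$ by smooth $u_k\to u$ in $\mathcal{Y}$, regularize the truncation using smooth convex approximations $\beta_\eta$ of $t\mapsto t_-$ with $\beta_\eta'$ bounded and $\beta_\eta(0)=0$, apply the classical chain rule and Green's identity to $\beta_\eta(u_k)$ where everything is legitimate, and then pass to the limit first in $k$ (using the $\mathcal{Y}$-convergence and the trace bounds from Theorem \ref{thm:wellposednessFP}) and then in $\eta$ (using dominated convergence and lower semicontinuity of the $\mathcal{H}$-seminorm). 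The absorption structure of the final identity — all signs aligned — then makes the conclusion immediate.
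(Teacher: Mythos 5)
Your proposal is essentially the same as the paper's proof: both test against the negative part $u^-$, both invoke Green's identity \eqref{Greens_id} together with the distributional pairing \eqref{trace_dist_def}, both use $u^-|_{\Gamma_-}=g^-=0$ to kill the incoming boundary term, and both conclude from the fact that every term in the resulting identity is nonnegative. The paper organizes the computation as $\langle u^-, Tu^-\rangle$ (with $Tu^-$ defined distributionally on the set where $u\le 0$) while you write $\langle Tu, u_-\rangle$; these differ only by a sign and amount to the same calculation. The extra care you devote to justifying the chain rule for $T$ via smooth convex approximations is a reasonable supplement that the paper leaves implicit rather than a different route.
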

\begin{proof}
Let us denote $u^\pm = \max\{0,\pm u\}$ and set $A = Q\backslash\supp(u^+)$. 
We denote the characteristic function of $A$ as $\mathds{1}_A$. Then, $u^-\in \mathcal{H}$ with $\nabla_\theta u^- = (\nabla_\theta u)\mathds{1}_{A}$.  
Moreover, $T u^-\in \mathcal{H}'$ is a distribution given by
\begin{equation*}
\langle \phi,T u^-\rangle =-\int_A  \big(\epsilon^2\sigma(x)\nabla_\theta  u^-\cdot\nabla_\theta\phi +  \lambda(x)u^-\phi + f\phi \big)dxd\theta ,\quad\forall \phi\in C^\infty_c(A),
\end{equation*}
whose definition extends to all $\phi\in\mathcal{H}$ supported in $A$. On the other hand, identity \eqref{Greens_id} for $u^-$ gives
\begin{equation*}
2\langle  u^-,T u^-\rangle_{\mathcal{H},\mathcal{H}'} 
= \int_{\Gamma_-\cup\Gamma_+} |u^-|^2(\theta\cdot\nu(x))dS(x)d\theta.
\end{equation*}
From the previous two equalities and using that $u|_{\Gamma_-}=g\geq 0$, thus $u^-|_{\Gamma_-}=0$, we deduce that

\begin{equation*}
\begin{aligned}
&\int_A \lambda |u^-|^2dxd\theta + \frac{1}{2}\int_{\Gamma_+}|u^-|^2(\theta\cdot\nu(x))dS(x)d\theta\\
&\hspace{3em}= -\epsilon^2\int_A\sigma(x)|\nabla_\theta u^-|^2dxd\theta - \int_Af u^- dxd\theta, 
\end{aligned}
\end{equation*}
where by virtue of $f\geq 0$ it leads to
\begin{equation*}
\begin{aligned}
&\lambda_0\int_A| u^-|^2dxd\theta + \frac{1}{2}\int_{\Gamma_+}| u^-|^2(\theta\cdot\nu(x))dS(x)d\theta\leq 0\\
\end{aligned}
\end{equation*}
In consequence $ u^- = 0$. 
\end{proof}
Following \cite[Lemma 3.4]{HJJ}, we obtain the following continuity in the $L^\infty$-norm for vanishing boundary source. 

\begin{lemma}\label{lemma:Linfty_est}
If $f\in L^\infty(Q)$ compactly supported and $g=0$, then, the solution to \eqref{FP} satisfies the estimate $\|u\|_\infty\leq \lambda_0^{-1}\|f\|_\infty$.
\end{lemma}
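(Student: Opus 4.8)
The plan is to prove the $L^\infty$ bound by a maximum-principle-type argument, testing the equation against a suitable truncation of $u$, exactly in the spirit of the proof of Lemma \ref{lemma:min_pple}. Set $M := \lambda_0^{-1}\|f\|_\infty$ and consider $w := (u-M)^+ = \max\{0, u-M\}$. Since $u \in \mathcal{Y}$ and $M$ is a constant (note $M \cdot 1 \notin L^2(Q)$ on the unbounded half-space, so a small technical point is that one should work with $w$ directly, observing $w \in \mathcal{H}$ because $w$ is supported where $u \geq M$ and is dominated by $|u|$ there, with $\nabla_\theta w = (\nabla_\theta u)\mathds{1}_{\{u>M\}}$). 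On the set $A := \{u > M\}$ one has $Tw \in \mathcal{H}'$ given, as in the previous lemma, by
\begin{equation*}
\langle \phi, Tw\rangle = -\int_A \big(\epsilon^2\sigma(x)\nabla_\theta u\cdot\nabla_\theta\phi + \lambda u\phi - f\phi\big)\,dxd\theta,\qquad \forall\,\phi\in\mathcal{H}\text{ supported in }A,
\end{equation*}
obtained from \eqref{trace_dist_def}. Here it is legitimate to take $\phi = w$.

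Next I would combine this with Green's identity \eqref{Greens_id} applied to $u_1 = u_2 = w$, which gives $2\langle w, Tw\rangle_{\mathcal{H},\mathcal{H}'} = \int_{\Gamma_-\cup\Gamma_+} |w|^2(\theta\cdot\nu(x))\,dS(x)d\theta$. Since $g = 0$ we have $u|_{\Gamma_-} = 0 \leq M$, hence $w|_{\Gamma_-} = 0$, so the boundary term reduces to $\int_{\Gamma_+}|w|^2|\theta\cdot\nu(x)|\,dS(x)d\theta \geq 0$. Writing $u = w + M$ on $A$ in the interior term and using $\nabla_\theta u \cdot \nabla_\theta w = |\nabla_\theta w|^2$, I obtain
\begin{equation*}
\frac{1}{2}\int_{\Gamma_+}|w|^2|\theta\cdot\nu(x)|\,dS(x)d\theta + \epsilon^2\int_A \sigma|\nabla_\theta w|^2\,dxd\theta + \int_A \lambda(w+M)w\,dxd\theta = \int_A f w\,dxd\theta.
\end{equation*}
All terms on the left except possibly $\int_A \lambda M w$ are manifestly nonnegative (the diffusion term, the boundary term, and $\int_A \lambda w^2$). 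Moving $\int_A \lambda M w$ to the right and bounding $\lambda \geq \lambda_0$, $f \leq \|f\|_\infty = \lambda_0 M$ pointwise on $A$ (where $w \geq 0$), we get
\begin{equation*}
\lambda_0 \int_A w^2\,dxd\theta \leq \int_A (f - \lambda M) w\,dxd\theta \leq \int_A (\|f\|_\infty - \lambda_0 M) w\,dxd\theta = 0,
\end{equation*}
forcing $w \equiv 0$, i.e. $u \leq M$ a.e. Applying the identical argument to $-u$ (which solves the same equation with source $-f$, still bounded by $\|f\|_\infty$) yields $u \geq -M$ a.e., and hence $\|u\|_\infty \leq \lambda_0^{-1}\|f\|_\infty$.

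The routine part is the integration-by-parts bookkeeping; the only genuinely delicate point is the justification that $w = (u-M)^+ \in \mathcal{H}$ with the claimed angular gradient and that it is an admissible test function in the distributional identity for $Tu$ despite $M$ not being globally square-integrable on $\RR^n_+\times\SS^{n-1}$. This is handled by noting that $w$ is supported in $A$, where $|w| \le |u| \in L^2$ and $|\nabla_\theta w| \le |\nabla_\theta u| \in L^2$; the chain rule for the $H^1_\theta$-truncation is standard (Stampacchia), and the extension of \eqref{trace_dist_def} to test functions in $\mathcal{H}$ supported in $A$ is exactly what was used in Lemma \ref{lemma:min_pple}. An alternative, if one prefers to avoid even this mild subtlety, is to first localize: prove the estimate on $\Omega\times\SS^{n-1}$ for bounded $\Omega$ (where constants are in $L^2$) using the remark that all results of this section hold there, and then let $\Omega \to \RR^n_+$; since $\supp f$ is compact and $\|u\|_{\mathcal{Y}}$ is controlled globally, the bound passes to the limit. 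I would present the direct truncation argument as the main proof and mention the localization only if a referee objects.
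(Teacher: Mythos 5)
Your proof is correct and reaches the same constant, but it is a genuinely different argument from the paper's. The paper, following \cite[Lemma 3.4]{HJJ}, argues by contradiction via duality and Lebesgue density points: supposing $u>\lambda_0^{-1}\|f\|_\infty+\alpha$ on a set $A$ of positive measure, it picks a ball $B$ with $\text{meas}(B\cap A)>(1-\delta)\text{meas}(B)$, approximates $\chi_B/\text{meas}(B)$ by a smooth nonnegative $h$, solves the backward problem \eqref{b_FP} with source $h$ to obtain a nonnegative test function $\varphi$ (nonnegativity via Lemma \ref{lemma:min_pple}), and compares $\int f\varphi=\int u\,h$ against the a priori bound $\lambda_0\int\varphi\le\|h\|_{L^1}$, reaching a contradiction as $\delta\to0$. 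Your Stampacchia truncation $w=(u-M)^+$ with $M=\lambda_0^{-1}\|f\|_\infty$ gives the result in a single energy estimate, with no auxiliary backward solve, no density points, and no indicator approximation; it is in the spirit of the proof of Lemma \ref{lemma:min_pple} itself, just truncating at level $M$ instead of $0$. The tradeoff is exactly the one you flag: because $M>0$ and $Q$ is unbounded, justifying that $w\in\mathcal{Y}$ with trace $(u|_{\Gamma}-M)^+$ and angular gradient $(\nabla_\theta u)\mathds{1}_{\{u>M\}}$ --- all needed in order to invoke Green's identity \eqref{Greens_id} for $u_1=u_2=w$ --- requires an additional, if standard, argument. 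The paper's duality route avoids this entirely: its test function $\varphi$ is a strong solution (Theorem \ref{thm:higher_reg} plus the extension across $\Gamma_+$), so every integration by parts is performed against a smooth nonnegative function, and the only nonlinear truncation it relies on is the truncation at zero already carried out in Lemma \ref{lemma:min_pple}. Both arguments are sound; yours is shorter and more self-contained, while the paper's is more conservative about nonlinear truncations of the unknown in $\mathcal{Y}$.
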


\begin{proof}
Let $M=\|f\|_\infty$. 
Arguing by contradiction, let's assume there exists $\alpha>0$ and a bounded set $A\subset Q$ with positive measure such that, with out lost of generality, $u(x,\theta)>M\lambda_0^{-1}+\alpha$ on $A$. Notice that $u\in L^2(Q)$ since $f$ is also square integrable.

For any sufficiently small $\delta>0$ we can find a ball $B\subset Q$ such that 
\begin{equation}\label{meas_ineq}
\text{meas}(B\cap A)>(1-\delta)\text{meas}(B),
\end{equation}
(see for instance \cite{St}).
We take $h\in C^\infty_c(Q)$ so that $h\geq 0$, $\supp h\subset\bar{B}$ and
\begin{equation*}
\left|\int\Big(\frac{\chi_B}{\text{meas}(B)} - h\Big)dxd\theta\right| = \left|1-\int h dxd\theta\right|<\delta,
\end{equation*}
and consider $\varphi$, solution to the backward Fokker-Planck equation
\begin{equation}\label{b_FP}
 -\epsilon^2\sigma(x)\Delta_\theta \varphi- \theta\cdot\nabla_x \varphi + \lambda \varphi= h,\quad \varphi|_{\Gamma_+} = 0.
 \end{equation}
According to lemma \ref{lemma:min_pple} applied to \eqref{b_FP} we deduce $\varphi\geq 0$.
Then, we have
\begin{equation*}
\int_{Q} f \varphi dxd\theta \leq M\int_{Q} \varphi dxd\theta \leq M\lambda_0^{-1}(1+ \delta).
\end{equation*}
The last inequality follows from the fact that $\varphi$ is indeed a strong solution of \eqref{b_FP} (as we will see in \S\ref{sssec:reg}) and then integrating \eqref{b_FP} over $Q$. Indeed, 
\begin{equation*}
\int_{Q}  hdxd\theta = \int_Q\big(- \theta\cdot\nabla_x \varphi + \lambda \varphi\big)dxd\theta
\end{equation*}
where we used Green's identity on the unit sphere: $\int_{\SS^{n-1}}\Delta_\theta \varphi d\theta= 0$. Stoke's theorem and the lower bound $\lambda_0\leq \lambda$ then yield
\begin{equation*}
\begin{aligned}
\lambda _0\int_{Q}  \varphi dxd\theta &\leq\int_{Q}  hdxd\theta 
+\int_{\Gamma_-}\varphi (\theta\cdot\nu(x))dS(x)d\theta\leq \|h\|_{L^1},
\end{aligned}
\end{equation*}
where by definition of $h$, $\|h\|_{L^1}\leq 1+\delta$. 

On the other hand,
\begin{equation*}
\int_{Q} f \varphi dxd\theta  = \int_{Q} u h dxd\theta  = I_1 + I_2,
\end{equation*}
with 
\begin{equation*}
\begin{aligned}
I_1 &= \int_{B\cap A} uh dxd\theta \\
&\geq (M\lambda_0^{-1}+\alpha)\left(\int_{B\cap A}\frac{\chi_B}{\text{meas}(B)}dxd\theta - \int_{B\cap A}\left( \frac{\chi_B}{\text{meas}(B)}- h\right)dxd\theta\right)\\
&\geq (M\lambda_0^{-1}+\alpha)\left(\frac{\text{meas}(B\cap A)}{\text{meas}(B)} - \delta\right) >(M\lambda_0^{-1}+\alpha)(1-2\delta),
\end{aligned}
\end{equation*}
and
\begin{equation*}
\begin{aligned}
I_2 = \int_{B\backslash A} u h dxd\theta \leq \|u\|_{L^2}\|h\|_\infty\text{meas}(B\backslash A)^{1/2}
\leq \|u\|_{L^2}\|h\|_\infty\text{meas}(B)^{1/2}\delta^{1/2}.
\end{aligned}
\end{equation*}
Bringing the above together, we obtain that for some $C>0$ independent of $\delta$,
\begin{equation*}
(M\lambda_0^{-1}+\alpha)(1-2\delta) - C\delta^{1/2}\leq M\lambda_0^{-1}(1+\delta),
\end{equation*}
which then implies that
\begin{equation*}
M\lambda_0^{-1}+\alpha \leq M\lambda_0^{-1} + C\delta^{1/2}.
\end{equation*}
It remains to take $\delta$ small enough to obtain a contradiction.
\end{proof}

\subsection{Regularity}\label{subsec:regularity}

The regularity of solution to Fokker-Planck is obtained by using an appropriate local representation of the equation. These {\em beam coordinates} were already used in \cite{BL1} to derive the Fermi pencil-beam equation from an asymptotic expansions of Fokker-Planck and linear Boltzmann equations.

\subsubsection{Beam coordinates}
Consider the map $\mathcal{B}:\SS^{n-1}_+\to\RR^{n-1}$  transforming the upper hemisphere $\SS^{n-1}_+:=\{\theta\in\RR^{n-1}:|\theta|=1,\; \theta_n>0\}$ and given by
\begin{equation*}
\mathcal{B}(\theta):= \Big(\frac{\theta_1}{\theta_n},\dots,\frac{\theta_{n-1}}{\theta_n}\Big),
\end{equation*}
and whose inverse is defined as the map $\mathcal{B}^{-1}:\RR^{n-1}\to \SS^{n-1}_+$  such that
\begin{equation*}
\mathcal{B}^{-1}(v) =  \Big(\frac{v}{\langle v\rangle},\frac{1}{\langle v\rangle}\Big).
\end{equation*}
Under such coordinates, we obtain the local representations:
\begin{equation*}
\begin{aligned}
&\mathring{g}=\frac{1}{\langle v\rangle^2}\Big(Id - \Big(\frac{v}{\langle v\rangle}\Big)\Big(\frac{v}{\langle v\rangle}\Big)^T\Big),\quad d\theta= \langle v\rangle^{-n}dv,\\
&\nabla_\theta=\langle v\rangle^2(Id+vv^T)\nabla_v,\quad\text{and}\quad \Delta_\theta= \langle v\rangle^n\nabla_v\cdot \langle v\rangle^{2-n}(Id+vv^T)\nabla_v.
\end{aligned}
\end{equation*}

\subsubsection{Normal form and H\"older regularity}\label{subsec:normal_form}
The regularity properties for  Fokker-Planck  solutions are obtained by assigning to  \eqref{FP} a normal form for which regularity issues are well understood. 
The same analysis applies to backward Fokker-Planck.

Let us pick an arbitrary $\theta_0\in\SS^{n-1}$. By embedding the unit sphere in $\RR^n$, we consider spatial coordinates so that $\theta_0= (0,\dots,0,1)=:N$. On the unit sphere, we choose beam-coordinates mapping the hemisphere $\{\theta\in\SS^{n-1}: \theta\cdot\theta_0>0\}$ onto $\RR^{n-1}$. 

Let $U_x\subset\RR^n_+$ be a neighborhood of an arbitrary point $x_0\in \RR^n_+$ and $U_v$ a neighborhood of the origin in $\RR^{n-1}$. For any test function $\tilde{\phi} = \langle v\rangle^{n+1}\phi\in C^\infty_c(U_x\times U_v)$, in local coordinates, equation \eqref{FP} takes the form:
\begin{equation}\label{FP_loc_coord}
\begin{aligned}
\int -u\partial_{x^n}\phi-u v \cdot\nabla_{x'}\phi+\langle\frac{\epsilon^2\sigma(x)}{\langle v\rangle^{n-2}}(Id + vv^T)&\nabla_v u,\nabla_v(\langle v\rangle^{n+1}\phi)\rangle \\
& + (\langle v\rangle\lambda)u\phi \;dxdv = \int (\langle v\rangle f )\phi dxdv.
\end{aligned}
\end{equation}
We re-label coordinates by writing $(y,t):=(x',x^n)$, and define
\begin{equation}\label{opAB}
A(t,y,v)= \epsilon^2\sigma(y,t)\langle v\rangle^{3}(Id + vv^T),\quad B(t,y,v) = \epsilon^2(n+1)\sigma(y,t)\langle v\rangle^3 v;
\end{equation}
and $c(y,v) = \langle v\rangle \lambda(y,t)$, $\hat{f}(t,y,v) = \langle v\rangle f(t,y,v)$. Then, assuming enough regularity on $u(y,t,v)$ one verifies that it satisfies the equation
\begin{equation}\label{KFP_eq}
\partial_tu + v\cdot\nabla_{y}u= \nabla_v\cdot(A\nabla_v u) - B\cdot\nabla_v u - cu+\hat{f},\quad \text{in }U_x\times U_v,
\end{equation}
in the weak sense, i.e. $u,\nabla_vu,\partial_tu + v\cdot\nabla_{y}u\in L^2_{loc}(U_x\times U_v)$. 
We call this equation the {\em normal form} of \eqref{FP}. We point out that the matrix-valued function $A$ is positive semi-definite at every point while the vector-valued function $B$ and the scalar function $c$ are bounded in $U_x\times U_v$.

If a null boundary source is imposed, that is $g=0$, we can also write equation \eqref{FP} in normal form around (outgoing) boundary point $(x_0,\theta_0)\in\Gamma_+$ by extending the problem across $\Gamma_+$. Indeed, let $(x_0,\theta_0)\in \Gamma_+$ and $\mathcal{U}\subset\RR^{n}\times\SS^{n-1}$ a sufficiently small neighborhood of $(x_0,\theta_0)$. Denote by $u_{out}$ the unique solution to the  exterior problem
\begin{equation}\label{FP_ext1}
-\epsilon^2\sigma\Delta_\theta u_{out}+ \theta\cdot\nabla_xu_{out} + \lambda u_{out} = 0\quad\text{in}\quad \RR^n_-\times\SS^{n-1},
\end{equation}
with (exterior) incoming boundary conditions
\begin{equation*}
u_{out} = u \quad\text{ on }\Gamma_+.
\end{equation*}
Here, $\sigma$ and $\lambda$ are any positive extension of the coefficients that preserve the original regularity.
It turns out that $u_{ext} = u|_{\bar{\RR}^n_+}+u_{out}|_{\RR^n_-}$ satisfies that
\begin{equation*}
\int \epsilon^2\sigma\nabla_{\theta}u_{ext}\cdot\nabla_{\theta}\phi - u_{ext}(\theta\cdot\nabla_{x}\phi) + \lambda u_{ext}\phi \;dxd\theta = \int f_{ext}\phi \;dxd\theta,
\end{equation*}
for all $\phi\in C_c^\infty(\mathcal{U})$, with $f_{ext}$ the zero extension of $f$. We can choose beam coordinates with respect to $(x_0,\theta_0)$ and deduce that $u_{ext}$ satisfies (in a weak sense) an equation of the form \eqref{KFP_eq}, for all $(y,t,v)$ in a bounded open set.

A first consequence of the normal form \eqref{KFP_eq} is the H\"older continuity of solutions.
This issue has been studied by several authors in the context of equations of the form \eqref{KFP_eq} (see for instance the recent results \cite{GIMV,WZ}, and the survey \cite{Br}). 
The following is a direct consequence of Theorem 1.1 in \cite{WZ} (or [Theorem 3 in \cite{GIMV}] in the case $c=0$).

\begin{theorem}\label{thm:Holder_reg}
Let $u$ be solution to \eqref{FP} for $f,\sigma,\lambda\in L^\infty(\bar{Q})$. Then, there is $\alpha\in(0,1)$ so that $u\in C^{0,\alpha}(Q)$. If in addition $g=0$ in \eqref{FP}-\eqref{FP_bc} then $u\in C^{0,\alpha}(Q\cup\Gamma_+)$.
\end{theorem}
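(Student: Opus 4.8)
The plan is to reduce Theorem~\ref{thm:Holder_reg} to the already-established regularity theory for ultra-parabolic (Kolmogorov--Fokker--Planck type) equations of the form \eqref{KFP_eq}, as summarized in \cite{WZ, GIMV}. The key point is that the normal form \eqref{KFP_eq} is, \emph{locally}, exactly an equation to which those results apply: $u$, $\nabla_v u$, and $\partial_t u + v\cdot\nabla_y u$ all lie in $L^2_{\loc}$, the matrix $A$ is symmetric and, crucially, uniformly elliptic on any bounded set in the $v$-variables (from \eqref{opAB}, $A \gtrsim \epsilon^2 \sigma_0 \langle v\rangle^3 \,\mathrm{Id} \gtrsim \epsilon^2\sigma_0\,\mathrm{Id}$ on a bounded $v$-neighborhood, using $Id + vv^T \geq Id$), while $B$ and $c$ are bounded measurable and $\hat f = \langle v\rangle f \in L^\infty_{\loc}$. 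So the first step is simply to record that on each coordinate patch the hypotheses of \cite[Theorem~1.1]{WZ} are met, yielding $u \in C^{0,\alpha}_{\loc}$ in the $(t,y,v)$ variables for some $\alpha \in (0,1)$ depending only on dimension and the ellipticity/boundedness constants (hence on $\sigma_0$, $\|\sigma\|_\infty$, $\|\lambda\|_\infty$, and $\epsilon$ — but for fixed $\epsilon$ this is fine).

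Second, I would globalize: the beam-coordinate charts from \S\ref{subsec:normal_form} (built around arbitrary $\theta_0 \in \SS^{n-1}$ and arbitrary $x_0 \in \RR^n_+$) cover all of $Q = \RR^n_+ \times \SS^{n-1}$; since $\SS^{n-1}$ needs only finitely many hemispherical beam charts and H\"older continuity is a local property invariant under the smooth changes of coordinates $\mathcal{B}^{-1}$, one concludes $u \in C^{0,\alpha}(Q)$. One should note the exponent $\alpha$ is uniform because there are finitely many chart types and the coefficient bounds are uniform in the charts; the $C^{0,\alpha}$ modulus on a fixed compact subset of $Q$ is controlled by $\|u\|_{L^2}$ plus $\|\hat f\|_\infty$, which in turn is controlled via Theorem~\ref{thm:wellposednessFP} by $\|f\|_{L^2} + \|g\|_{L^2}$ and by $\|f\|_\infty$.

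Third, for the boundary statement when $g=0$: here I would use the extension construction already set up in the excerpt. Solve the exterior backward-type problem \eqref{FP_ext1} with data $u|_{\Gamma_+}$ (which is in $L^2(\Gamma_+)$ by Theorem~\ref{thm:wellposednessFP}), set $u_{ext} = u|_{\bar\RR^n_+} + u_{out}|_{\RR^n_-}$, and observe — as stated in the excerpt — that $u_{ext}$ weakly satisfies an equation of the form \eqref{KFP_eq} on a full neighborhood $\mathcal U$ of any $(x_0,\theta_0)\in\Gamma_+$, with $f_{ext}$ the zero-extension of $f$ (still $L^\infty$), and coefficients extended preserving their bounds. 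Then the interior regularity result applied to $u_{ext}$ on $\mathcal U$ gives $u_{ext} \in C^{0,\alpha}$ there, and restricting back to $\bar\RR^n_+$ yields $u \in C^{0,\alpha}(Q \cup \Gamma_+)$. The fact that $g=0$ is exactly what makes $u_{ext}$ continuous across $\Gamma_+$ (no jump in the ``value'' to be glued), which is why the hypothesis is needed.

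The only genuine subtlety — and the step I would be most careful about — is the verification that $u_{ext}$ really does satisfy \eqref{KFP_eq} \emph{weakly across} $\Gamma_+$, i.e.\ that there is no singular (distributional) contribution supported on the gluing interface. This requires that the traces of $u$ and $u_{out}$ from the two sides agree on $\Gamma_+$ (which holds by construction: $u_{out}=u$ on $\Gamma_+$), and that no transversal-derivative jump survives; the transport operator $\theta\cdot\nabla_x$ is first-order and the diffusion is purely angular, so the only interface terms in the weak formulation are the matching trace terms, which cancel. I would phrase this as: the pairing $\int u_{ext}(\theta\cdot\nabla_x\phi) + (\text{angular terms})$ produces, upon splitting $\RR^n = \RR^n_+ \cup \RR^n_-$ and integrating by parts in each half, a boundary term on $\Gamma_+$ that vanishes precisely because the two one-sided traces coincide and the test function is free there. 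Once this ``no interface charge'' fact is in hand, the rest is a direct citation.
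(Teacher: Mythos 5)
Your proposal takes essentially the same route as the paper: the paper treats Theorem~\ref{thm:Holder_reg} as a direct consequence of \cite[Theorem~1.1]{WZ} (or \cite[Theorem~3]{GIMV} when $c=0$) applied to the normal form \eqref{KFP_eq} in beam coordinates, with the $\Gamma_+$ case handled by the extension construction of \S\ref{subsec:normal_form} exactly as you describe. One small quibble: the traces of $u$ and $u_{out}$ agree on $\Gamma_+$ by construction of the exterior problem regardless of $g$, so the claim that ``$g=0$ is what makes $u_{ext}$ continuous across $\Gamma_+$'' is not quite the right reason the hypothesis is imposed, though this does not affect the correctness of the argument.
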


\begin{remark}
In the context of backward Fokker-Planck with null boundary condition $g=0$ on $\Gamma_+$, the analogous result gives $u\in C^{0,\alpha}(Q\cup\Gamma_-)$.
\end{remark}

\subsubsection{Sobolev regularity and strong solutions}\label{sssec:reg}
Let us introduce the operators $D^{s}_{x}:=(I-\Delta_{x})^{s/2}$, with $s\in\RR$, defined by
\begin{equation*}
D^{s}_{x}u= \frac{1}{(2\pi)^{n/2}}\int_{\RR^{n}}e^{\ii k\cdot x}(1+|k|^2)^{s/2}\hat{u}(k)dk,\quad\forall u\in C^\infty_c(\RR^{n}),
\end{equation*}
where $\hat{\;\;}$ stands for the Fourier transform. Following the notation $x=(x',x^n)$, we analogously define $D_{x'}$ for an operator acting only on $x'$.

\begin{theorem}\label{thm:higher_reg}
Let $u$ be a weak solution to \eqref{FP} with source $f\in L^2(Q)$. Then, 
\begin{equation*}
\Delta_\theta u,\; \theta\cdot\nabla_xu,\; D^{2/3}_xu\in L^2_{loc}(Q),
\end{equation*}
and in particular $u$ is a strong solution.
Moreover, for every compact $\mathcal{K}$ and open $\mathcal{O}$ such that $\mathcal{K}\subset\mathcal{O}\subset Q$, there exists a constant $C>0$ so that
\begin{equation}\label{subslliptic_est}
\begin{aligned}
&\epsilon^{2}\|\Delta_\theta u\|_{L^2(\mathcal{K})}+\epsilon^{2/3}\|D^{2/3}_xu\|_{L^2(\mathcal{K})}+\|\theta\cdot\nabla_xu\|_{L^2(\mathcal{K})}\\
&\hspace{5em} \leq C\big(\|f\|_{L^2(\mathcal{O})} + \epsilon^2\|\nabla_\theta u\|_{L^2(Q)} + \|u\|_{L^2(Q)}\big).
\end{aligned}
\end{equation}
\end{theorem}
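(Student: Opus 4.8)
The plan is to derive the subelliptic estimate \eqref{subslliptic_est} by the classical commutator method of Kohn \cite{K}, working in the local normal form \eqref{KFP_eq}, and then to read off the stated regularities $\Delta_\theta u,\ \theta\cdot\nabla_x u,\ D^{2/3}_x u\in L^2_{\loc}$ as corollaries. First I would fix $\mathcal{K}\subset\mathcal{O}\subset Q$, choose a cutoff $\chi\in C^\infty_c(\mathcal{O})$ with $\chi\equiv 1$ on $\mathcal{K}$, and pass to beam coordinates $(y,t,v)$ so that $u$ satisfies \eqref{KFP_eq} weakly, i.e.\ $u,\nabla_v u,\ (\partial_t+v\cdot\nabla_y)u\in L^2_{\loc}$. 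The velocity-diffusion estimate is the easy half: testing \eqref{KFP_eq} against $\chi^2 u$ and using that $A\gtrsim \epsilon^2 Id$ on $\supp\chi$ (the matrix $\epsilon^2\sigma\langle v\rangle^3(Id+vv^T)$ is uniformly elliptic on a bounded $v$-neighborhood, with constant $\sim\epsilon^2$), while $B$, $c$ are bounded, gives $\epsilon^2\|\chi\nabla_v u\|_{L^2}^2\lesssim \|f\|_{L^2(\mathcal{O})}^2+\|u\|_{L^2(\mathcal{O})}^2+(\text{commutator terms from }[\,v\cdot\nabla_y,\chi^2\,])$, and a second differentiation in $v$ against $\chi^2\Delta_v u$ upgrades this to control of $\epsilon^2\|\chi\Delta_v u\|_{L^2}$; translating back through the formulas for $\nabla_\theta,\Delta_\theta$ in beam coordinates recovers the $\epsilon^2\|\Delta_\theta u\|_{L^2(\mathcal{K})}$ term and, combined with the equation itself, the $\|\theta\cdot\nabla_x u\|_{L^2(\mathcal{K})}$ term, with the stated right-hand side.

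The substantive step is the gain of $2/3$ of a spatial derivative. Here I would apply the fractional-derivative operator $\Lambda_x^{2/3}:=D_x^{2/3}$ (localized as $\chi\Lambda^{2/3}\chi$, or via a pseudodifferential cutoff) to the equation and pair with $\chi^2\Lambda_x^{2/3}u$. The transport term produces $\langle \Lambda_x^{2/3}(v\cdot\nabla_y u),\chi^2\Lambda_x^{2/3}u\rangle$; writing $v\cdot\nabla_y u=(\partial_t+v\cdot\nabla_y)u-\partial_t u$ is not quite the right bookkeeping—instead one exploits the commutation structure $[\Lambda_{x'}^{2/3}, v\cdot\nabla_{y}]$ together with an integration by parts in $y$ that trades a full $y$-derivative on $u$ for a $v$-derivative plus the ``good'' operator $X:=\partial_t+v\cdot\nabla_y$, the mechanism being $\partial_{y_j}u = v_j^{-1}(Xu-\partial_t u)$ heuristically, made rigorous through the algebraic identity $[\partial_{y_j}, X]=0$ and $[\,v_j, X\,]=\partial_{y_j}$, which is precisely the Hörmander bracket that makes \eqref{KFP_eq} hypoelliptic with loss $2/3$. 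Concretely, the composition of one $\nabla_v$ and one $X$ generates $\nabla_y$, so $\|D_{x'}^{2/3}u\|$ is bounded by an interpolation between $\|\nabla_v u\|$ (just obtained, $O(\epsilon^{-1})$) and $\|Xu\|=\|\theta\cdot\nabla_x u\|$ (also just obtained), yielding the weight $\epsilon^{2/3}$ on $\|D^{2/3}_x u\|$; the $\partial_t=\partial_{x^n}$ direction is handled directly from the equation since $Xu$ and $v\cdot\nabla_y u$ are both controlled. All error terms generated by commuting $\Lambda_x^{2/3}$ past $\chi^2$ and past the bounded coefficients $A,B,c$ are of order $\le 1$ in derivatives and absorbed into $\|u\|_{L^2(Q)}+\epsilon^2\|\nabla_\theta u\|_{L^2(Q)}$ by Young's inequality, using that by Theorem \ref{thm:wellposednessFP} these global norms are finite.

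The main obstacle I anticipate is purely bookkeeping: tracking the powers of $\epsilon$ through the commutator estimates so that the final constant $C$ is genuinely $\epsilon$-independent and the weights $\epsilon^2,\epsilon^{2/3},1$ appear exactly as stated. This requires being careful that the ellipticity constant of $A$ is $\sim\epsilon^2$ (not $1$), that the ``bad'' first-order term $B\cdot\nabla_v u$ carries a compensating $\epsilon^2$, and that the interpolation producing $D^{2/3}_x$ distributes the available $\epsilon^2$ (diffusion) and $\epsilon^0$ (transport) weights as $\epsilon^{2\cdot(1/3)}=\epsilon^{2/3}$. A secondary technical point is justifying the computations for the a priori merely-weak solution $u$: this is handled in the standard way by a difference-quotient / Friedrichs-mollifier version of $\Lambda_x^{2/3}$ in the $x'$ variables (along which the equation is translation-invariant up to lower order), proving the estimate for regularized $u$ and passing to the limit, after which $\Delta_\theta u\in L^2_{\loc}$ upgrades $u$ from weak to strong. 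Once \eqref{subslliptic_est} is in hand the membership statements $\Delta_\theta u,\theta\cdot\nabla_x u,D^{2/3}_x u\in L^2_{\loc}(Q)$ are immediate by covering $Q$ with such pairs $\mathcal{K}\subset\mathcal{O}$.
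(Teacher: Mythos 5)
Your overall framework is the same as the paper's (normal form, beam coordinates, mollification, Kohn/Bouchut-style commutator estimates), but the decomposition of the argument into an ``easy'' velocity-diffusion half followed by a ``substantive'' spatial $D^{2/3}_x$ half is not structurally viable, and this is where the real content of the proof lives. If you test \eqref{KFP_eq} against $\chi^2\Delta_v u$ (or differentiate in $v$ and test against $\chi^2\nabla_v u$), the transport part produces, via $[\nabla_v,T]=\nabla_{x'}$, a term of the shape $\langle \nabla_{x'}u,\chi^2\nabla_v u\rangle$ that is not controllable by $L^2$ energy alone: it needs the fractional spatial estimate. Conversely, the commutator identity that generates the $D^{2/3}_{x'}$ gain bounds $\|D^{2/3}_{x'}u\|^2$ by $\|D^{1/3}_{x'}\nabla_v u\|\,\|h\|$, where $h=\nabla_v\cdot(A\nabla_v u)-B\cdot\nabla_v u -cu+f$ already contains $\Delta_v u$. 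So the two estimates are genuinely coupled. The paper resolves this by deriving a closed quadratic inequality for $\|\nabla_v\cdot A\nabla_v u\|$ whose right-hand side involves $\|h\|,\|f\|,\|D^{2/3}_{x'}u\|,\|D^{1/3}_{x'}\nabla_v u\|$, substituting the intermediate commutator bounds, and then absorbing $\|h\|\lesssim\|\nabla_v\cdot A\nabla_v u\|+\|f\|$ to close. Presenting the velocity estimate as obtainable in isolation, to be used later as input, would leave a loop in the logic that you never close.

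A secondary but telling error: you write $[\partial_{y_j},X]=0$ and $[v_j,X]=\partial_{y_j}$, but $v_j$ is annihilated by $X=\partial_t+v\cdot\nabla_y$, so $[v_j,X]=0$; the Hörmander bracket you need is $[\partial_{v_j},X]=\partial_{y_j}$ (equivalently, the paper's $\partial_{x^j}=\partial_{v_j}T-T\partial_{v_j}$). The heuristic $\partial_{y_j}u=v_j^{-1}(Xu-\partial_t u)$ is also misleading here because $v_j$ vanishes exactly on the beam axis; the commutator identity, not division by $v_j$, is what avoids that degeneracy. Finally, to get $D^{2/3}_x$ in the full spatial variable (not just $x'$) the paper re-runs the argument in a second beam chart (around $\theta_0'=(1,0,\dots,0)$), producing $D^{2/3}_{x''}$ and then summing in Fourier; the step from $D^{2/3}_{x'}$ to $D^{2/3}_x$ is not automatic from a single chart and should be addressed explicitly.
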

\begin{remark}
Estimates of the form \eqref{subslliptic_est} are usually referred to as {\em subelliptic estimates}, in this case with a gain of 2 derivatives in the angular variable and  $2/3$ derivatives in the spatial variable. Assuming smoothness in $f$, $\sigma$ and $\lambda$, repeated differentiation of \eqref{FP} combined with the subelliptic estimates lead to the {\em hypoellipticity} property.
\end{remark}
\begin{remark}
According to \S\ref{subsec:normal_form}, extending the solution near a point $(x,\theta)\in\Gamma_+$ across  the boundary implies $\Delta_\theta u,\; \theta\cdot\nabla_xu,\; D^{2/3}_xu\in L^2(\mathcal{K})$ for any compact $\mathcal{K}\subset Q\cup\Gamma_+$.
\end{remark}

We give a brief proof of the theorem and we refer the reader to Appendix \ref{appdx:proofThm_reg} for a more
detailed one.

\begin{proof}
It is enough to obtain the estimates in a neighborhood $\mathcal{U}$ of an interior point $(x_0,\theta_0)\in Q$. Up to some rotation, we can always assume that $\theta_0$ is contained in the span of $(1,0\dots,0)$ and $(0,\dots,0,1)=N$. We first consider beam coordinates on the upper hemisphere $\SS^{n-1}_+=\{\theta^n>0\}$, thus $N=(0,\dots,0,1)$ is identified with $0\in \RR^{n-1}$.

After standard mollification and localization arguments we reduce the problem to obtaining the desired estimates for a smooth compactly supported function $u$, solution to the normal form equation
\begin{equation}\label{eq_T}
T u -\nabla_v\cdot(A\nabla_vu) + B\cdot\nabla_vu+ c u = f,\quad\forall (x,v)\in \mathcal{U},
\end{equation}
with $f$ (different from the original one) a smooth and compactly supported function with $L^2$-norm bounded by the norm of the original source and the $\mathcal{H}$-norm of the Fokker-Planck solution. Here $T$ stands for the transport operator
\begin{equation*}
T = (\partial_{x^n} + v\cdot\nabla_{x'}),
\end{equation*}
which satisfies the commutator identity: 
\begin{equation}\label{commutator_id}
\partial_{x^j} = \partial_{v_j}T - T\partial_{v^j},\quad j=1,\dots,n-1.
\end{equation}
The bulk of the proof is analogous to \cite{Bo} where a central role is played by the (H\"ormander-type) identity \eqref{commutator_id}. The main difference is the more general form of the principal term,  which is easily addressed due to the positive-definiteness of the anisotropic coefficient $A$. The technique consists in obtaining the estimates
\begin{equation}\label{sub_est1}
\|\nabla_v\cdot A\nabla_v u\|_{L^2}\leq C\|f\|_{L^2}\quad\text{and}\quad
\| D^{2/3}_{x'}u \|_{L^2}\leq C\epsilon^{-2/3}\|f\|_{L^2},
\end{equation}
from which one deduces
\begin{equation*}
\epsilon^{2}\|\Delta_\theta u\|_{L^2}\leq C\|f\|_{L^2}.
\end{equation*}
On the other hand, by considering instead beam coordinates on the hemisphere $\{\theta\in\SS^{n-1}:\theta^1>0\}$ we can repeat the computations leading to \eqref{sub_est1} but now in terms of the operator $D^s_{x''}= (1-\Delta_{x''})^{s/2}$, where we decompose the spatial variables as $x = (x^1,x'')$. We then obtain
\begin{equation*}
\| D^{2/3}_{x''}u \|_{L^2}\leq C\epsilon^{-2/3}\|f\|_{L^2},
\end{equation*}
which implies an analogous inequality for the full derivative $D^{2/3}_{x}u$.

The proof is complete by considering a convergent subsequence with respect to the mollification parameter, with respective derivatives converging weakly in $L^2$, in a smaller neighborhood $\mathcal{V}$ of $(x_0,\theta_0)$, to $\Delta_\theta u$ and $D^{2/3}_xu$ for $u$ now the Fokker-Planck solution.  The estimates for the mollification yield the following subelliptic estimate for the limit,
\begin{equation*}
\epsilon^{2}\|\Delta_\theta u\|_{L^2(\mathcal{V})} +\| D^{2/3}_xu \|_{L^2(\mathcal{V})} \leq C(\|f\|_{L^2(\mathcal{W})} + \|u\|_{\mathcal{H}}),
\end{equation*}
where here $f$ is the source term in \eqref{FP} and $\mathcal{W}\subset Q$ a slightly larger open set containing $\mathcal{U}$. Finally, it follows directly from the equation that
\begin{equation*}
\|\theta\cdot\nabla_x u\|_{L^2(\mathcal{V})}\leq C(\|f\|_{L^2(\mathcal{W})}+ \|u\|_{\mathcal{H}}).
\end{equation*}
\end{proof}

\section{Fokker-Planck with singular sources}\label{sec:rough_data}
Our main motivation to study Fokker-Planck equation is the modeling of narrow beams with a source represented as a delta distribution. The above well-posedness theory does not allow for such objects, and hence it is necessary to extend the equation to delta sources or more general finite compactly supported measures. We achieve this following a duality argument which implies the need of continuity results for solution to the backward Fokker-Planck equation. These results follow from \S\ref{sec:L2theory} and the paragraph at the beginning of that section. 

All the measures considered here will be of Borel type. Let $f$ and $g$ be finite measures in $Q$ and $\Gamma_-$ respectively. Below we denote by $P$ the forward Fokker-Planck operator while $P^t$ stands for the adjoint (or backward) Fokker-Planck operator.

\begin{definition}\label{def:dist_sol}
A distribution $u\in\mathcal{D}'(\bar{Q})$ is said to be a solution of \eqref{FP}-\eqref{FP_bc} whenever
\begin{equation*}
\begin{aligned}
\langle Pu,\varphi\rangle := \langle u,P^t\varphi\rangle = \langle f,\varphi \rangle + \langle g|\theta\cdot\nu(x)|,\varphi\rangle_{\Gamma_-},
\end{aligned}
\end{equation*}
for all $\varphi\in C^\infty_0(\bar{Q})$ such that $\varphi = 0$ on $\Gamma_+$.
\end{definition}
\begin{remark}\label{rmk:meas_sol}
(1) The factor $|\theta\cdot\nu(x)|$ is introduced to agree with the standard definition of weak solutions for Sobolev spaces in Definition \eqref{def:weak_sol}. (2) Notice that if $u$ is a finite measure, the above extends to all $\varphi$ solutions to \eqref{backFP} for some $\psi\in C_c(\bar{Q})$, the set of all continuous functions with compact support in $\bar{Q}$. This is because those solutions belong to $C(\bar{Q})\cap L^2(Q)$ and can be approximated (with respect to $\|\cdot\|_\infty$) by compactly supported smooth functions.
\end{remark}

\begin{theorem}\label{thm:FP_dist}
Let $\sigma,\lambda\in C(\bar{\RR}^n_+)$ and $f$ and $g$ (positive) finite measures with compact support, defined respectively on $Q$ and $\Gamma_-$. Then, there is a unique (positive) finite measure in $Q$ satisfying \eqref{FP}-\eqref{FP_bc}.
\end{theorem}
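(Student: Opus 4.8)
The plan is to construct the solution by duality against the backward Fokker-Planck equation, leveraging the $L^2$, $L^\infty$, and Hölder regularity results of Section \ref{sec:L2theory}. For the construction, I would first define a candidate linear functional on a suitable space of test functions: given $\psi \in C_c(\bar Q)$, let $\varphi_\psi \in C(\bar Q) \cap L^2(Q)$ be the unique solution of the backward Fokker-Planck equation $P^t \varphi_\psi = \psi$ with $\varphi_\psi|_{\Gamma_+} = 0$, which exists and is continuous by Theorem \ref{thm:wellposednessFP} together with the Hölder regularity of Theorem \ref{thm:Holder_reg} (applied to the backward equation with vanishing outgoing source, giving continuity up to $\Gamma_+$). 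Then set
\begin{equation*}
\Lambda(\psi) := \langle f, \varphi_\psi \rangle + \langle g\,|\theta\cdot\nu|, \varphi_\psi \rangle_{\Gamma_-}.
\end{equation*}
The key estimate is that $|\Lambda(\psi)| \le C \|\psi\|_\infty$, where $C$ depends on the total masses of $f$ and $g$. This is exactly where Lemma \ref{lemma:Linfty_est} enters: for $\psi \ge 0$ it gives $\|\varphi_\psi\|_\infty \le \lambda_0^{-1}\|\psi\|_\infty$, and since $f, g \ge 0$ and (by Lemma \ref{lemma:min_pple}) $\varphi_\psi \ge 0$, we get $0 \le \Lambda(\psi) \le \lambda_0^{-1}\|\psi\|_\infty(\,|f|(Q) + |g|(\Gamma_-)\,)$ for nonnegative $\psi$; splitting a general $\psi$ into positive and negative parts extends this to all of $C_c(\bar Q)$. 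By the Riesz representation theorem, $\Lambda$ is represented by a unique finite Borel measure $u$ on $\bar Q$, and positivity of $\Lambda$ on nonnegative $\psi$ forces $u \ge 0$.

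Next I would check that this $u$ is indeed a solution in the sense of Definition \ref{def:dist_sol}. For $\varphi \in C_c^\infty(\bar Q)$ with $\varphi = 0$ on $\Gamma_+$, one has $\varphi = \varphi_\psi$ for $\psi := P^t\varphi \in C(\bar Q)$ (here one must be slightly careful: $\psi$ need not have compact support in $\bar Q$, but since $f, g$ are compactly supported one can localize, or invoke part (2) of Remark \ref{rmk:meas_sol} which already records that the pairing extends to such $\varphi$). Then by construction $\langle u, P^t\varphi \rangle = \langle u, \psi\rangle = \Lambda(\psi) = \langle f, \varphi\rangle + \langle g|\theta\cdot\nu|, \varphi\rangle_{\Gamma_-}$, which is the defining identity. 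Compact support of $u$ (needed to match the statement) follows because if $\psi$ is supported away from $\supp f$ and $\supp g$ in a suitable region, then $\varphi_\psi$ — and hence the pairing — can be controlled to vanish; more precisely one uses that for $\psi$ supported outside the "influence cone" of the sources the backward solution tested against vanishing data integrates to zero, or one simply notes that finiteness plus the transport structure confines the mass. (I would state this via the representation and the support of $f,g$ rather than belabor it.)

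For uniqueness, suppose $u_1, u_2$ are two finite measure solutions and set $u = u_1 - u_2$; then $\langle u, P^t\varphi\rangle = 0$ for all admissible $\varphi$. Given any $\psi \in C_c(\bar Q)$, solving $P^t\varphi_\psi = \psi$ with $\varphi_\psi|_{\Gamma_+}=0$ and using density of smooth test functions (approximating $\varphi_\psi$ in $\|\cdot\|_\infty$ by elements of $C_c^\infty(\bar Q)$ vanishing on $\Gamma_+$, legitimate since $u$ is a finite measure) yields $\langle u, \psi \rangle = 0$ for all $\psi \in C_c(\bar Q)$, hence $u = 0$. The main obstacle I anticipate is the regularity/approximation bookkeeping on the backward side: ensuring $\varphi_\psi$ is genuinely continuous on $\bar Q$ up to $\Gamma_+$ (so that the boundary pairing $\langle g|\theta\cdot\nu|,\varphi_\psi\rangle_{\Gamma_-}$ makes sense and the Riesz step applies), and that it can be approximated in sup-norm by smooth test functions vanishing on $\Gamma_+$ — this combines Theorem \ref{thm:Holder_reg} (continuity up to $\Gamma_+$ for $g=0$), the trace theory of Theorem \ref{thm:wellposednessFP}, and the density statements cited there; once those are in hand the duality argument itself is short.
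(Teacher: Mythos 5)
Your proposal follows essentially the same duality construction as the paper's proof: define $u$ as the Riesz representative of the functional $\psi \mapsto \langle f, S_{adj}\psi\rangle_Q + \langle g|\theta\cdot\nu|, S_{adj}\psi\rangle_{\Gamma_-}$, where $S_{adj}$ is the backward source-to-solution map whose continuity up to the boundary comes from Theorem~\ref{thm:Holder_reg} and whose sup-norm bound comes from Lemma~\ref{lemma:Linfty_est}, and prove uniqueness by pairing the difference of two solutions against $P^t S_{adj}\psi = \psi$. Two minor corrections to your side remarks: the theorem does not assert that $u$ has compact support (only $f$ and $g$ are required to), so that step can be dropped entirely; and if $\varphi \in C_c^\infty(\bar Q)$ then $P^t\varphi = -\epsilon^2\sigma\Delta_\theta\varphi - \theta\cdot\nabla_x\varphi + \lambda\varphi$ is automatically in $C_c^\infty(\bar Q)$ with support contained in that of $\varphi$, so the compact-support worry you flag for $\psi$ does not arise.
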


\begin{proof}
Due to the H\"older-regularity result of Theorem \ref{thm:Holder_reg} and particularly the remark after it, and Lemma \ref{lemma:Linfty_est}, the source-to-solution map associated to the backward equation
\begin{equation}\label{backFP}
 -\epsilon^2\sigma(x)\Delta_\theta \varphi- \theta\cdot\nabla_x \varphi + \lambda \varphi= \psi,\quad \varphi|_{\Gamma_+} = 0,
\end{equation}
maps $S_{adj}:C_c(\bar{Q})\mapsto C(Q\cup\Gamma_-)$ continuously. It is a well defined map due to the inclusion $C_c(\bar{Q})\subset L^2(Q)$. 

We claim that $u$, defined by
\begin{equation}\label{dist_sol}
\langle u,\psi\rangle := \langle f,S_{adj}\psi \rangle_{Q} + \langle g|\theta\cdot\nu(x)|,S_{adj}\psi\rangle_{\Gamma_-},\quad\forall \psi\in C_c(\bar{Q}),
\end{equation}
is a finite measure and satisfies \eqref{FP}-\eqref{FP_bc} in the distributional sense. It is clearly a bounded linear map on $C_c(\bar{Q})$, thus by (Radon-Riesz representation) duality it defines a Radon measure on $\bar{Q}$, and it is finite since $f$ and $g$ are. Moreover, since any $\varphi\in C^\infty_c(\bar{Q})$ satisfying that $\varphi|_{\Gamma_+}=0$ can be regarded as the unique solution to  the backward system for $\psi= P^t\varphi\in C^\infty_c(\bar{Q})$, then
\begin{equation*}
\langle f,\varphi\rangle_{Q} +\langle g|\theta\cdot\nu(x)|,\varphi \rangle_{\Gamma_-}= \langle u,P^t\varphi\rangle_Q ,
\end{equation*}
which means $u$ is a weak solution. 

Let $v$ be another finite measure solution to \eqref{FP}-\eqref{FP_bc}. For an arbitrary $\psi\in C_c(\bar{Q})$ and denoting $\varphi = S_{adj}\psi$ we have
\begin{equation*}
\langle u-v,\psi\rangle =  \langle f,\varphi \rangle + \langle g|\theta\cdot\nu(x)|,\varphi\rangle_{\Gamma_-} - \langle v,P^t\varphi\rangle=0,
\end{equation*}
which implies $u=v$.
The positivity is a direct consequence of the definition and the analogous result of Lemma \ref{lemma:min_pple} for the backward problem.
\end{proof}
\bigskip

\section{Fermi pencil-beam equation}\label{sec:Fermi}

This section collects all the results we need on the Fermi pencil-beam solutions.

For $\tilde{\sigma}$ and $\tilde{\lambda}$ continuous functions depending only on $X^n\geq0$ and bounded from below by positive constants, the Fermi pencil-beam equation is given by
\begin{equation}\label{FPb}
\mathcal{P}(U):=- \tilde{\sigma}\Delta_V U  + V \cdot\nabla_{X'} U+ \partial_{X^n}U  + \tilde{\lambda}U= F,\quad (X,V)\in \RR^{n}_+\times\RR^{n-1},
\end{equation}
and is endowed with boundary condition
\begin{equation}\label{FPb_bc}
U|_{X^n=0} = G(X',V),\quad (X',V)\in\RR^{2(n-1)}.
\end{equation}
The following solvability result goes back to work of Eyges \cite{E} on transport theory of charged particles. Below we write $\mathfrak{F}_{X'}[\cdot]$ to denote the Fourier transformation with respect to the variable $X'$, that is
\begin{equation*}
\mathfrak{F}_{X'}[F](\xi) := \frac{1}{(2\pi)^{(n-1)/2}}\int e^{-\ii X'\cdot\xi}F(X')dX'.
\end{equation*}
$\mathfrak{F}_{V}$ is given analogously for a phase variable $\eta$. We also write $\tau_{Y'}$ to denote the translation map on the spatial transversal variables, i.e. $[\tau_{Y'}f](X') := f(X'-Y')$.

\begin{prop}\label{prop:FBP_wellposedness}
For any $F\in C(\RR_+;\mathcal{S}'(\RR^{2(n-1)}))$ and $G\in\mathcal{S}'(\RR^{2(n-1)})$, there exists a unique solution 
$U\in C^1(\RR_+;\mathcal{S}'(\RR^{2(n-1)}))$ 
to \eqref{FPb}-\eqref{FPb_bc} given explicitly by 
\begin{equation}\label{FPb_sol_form}
\begin{aligned}
U(X,V)&=e^{-\int^{X^n}_0\tilde{\lambda}(s)ds}\tau_{X^nV}({\bf H}_1*G) \\
&\hspace{2em}+ \int^{X^n}_0e^{-\int^{X^n}_t\tilde{\lambda}(s)ds}\tau_{X^nV}({\bf H}_2(t)*\tau_{-tV}F)dt,
\end{aligned}
\end{equation}
for $\bH_1(X,V)$ and $\bH_2(X,V;t)$ Gaussian kernels, namely
\begin{equation}\label{def:H1}
{\bf H}_1(X,V):=
\frac{e^{- \frac{1}{4(E_2E_0-E_1^2)}\big(E_0|X'|^2 + 2E_1X'\cdot V + E_2|V|^2\big)}}{(4\pi\sqrt{E_2E_0-E_1^2})^{n-1}}
\end{equation}
with $E_k(X^n) := \int^{X^n}_0s^k\tilde{\sigma}(s)ds$, and $\bH_2$ defined analogously by replacing the terms $E_k$ with $\tilde{E}_k(X^n;t) := \int^{X^n}_ts^k\tilde{\sigma}(s)ds$.
\end{prop}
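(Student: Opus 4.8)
\noindent\emph{Proof plan.} The strategy is to diagonalise \eqref{FPb}--\eqref{FPb_bc} by a partial Fourier transform in the transversal and angular variables, reducing it to an explicitly solvable family of one-dimensional transport problems. Let $\xi,\eta$ be dual to $X',V$ and set $\hat U(X^n,\xi,\eta):=\mathfrak F_{X'}\mathfrak F_V[U]$ (and likewise $\hat G,\hat F$). Using $\mathfrak F[\partial_{X'_j}\,\cdot\,]=\ii\xi_j\mathfrak F[\,\cdot\,]$, $\mathfrak F[V_j\,\cdot\,]=\ii\partial_{\eta_j}\mathfrak F[\,\cdot\,]$ and $\mathfrak F[-\Delta_V\,\cdot\,]=|\eta|^2\mathfrak F[\,\cdot\,]$, equation \eqref{FPb} transforms into
\begin{equation}\label{FPb_FT}
\partial_{X^n}\hat U-\xi\cdot\nabla_\eta\hat U+\big(\tilde\sigma(X^n)|\eta|^2+\tilde\lambda(X^n)\big)\hat U=\hat F,\qquad \hat U|_{X^n=0}=\hat G,
\end{equation}
in which $\xi$ now plays the role of a parameter. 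Since $\mathfrak F_{X'}\mathfrak F_V$ is a topological automorphism of $\mathcal S'(\RR^{2(n-1)})$ and of $C^k(\RR_+;\mathcal S'(\RR^{2(n-1)}))$, it suffices to produce a unique solution of \eqref{FPb_FT} in $C^1(\RR_+;\mathcal S')$ and transform back.

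The next step is to straighten the characteristics of $\partial_{X^n}-\xi\cdot\nabla_\eta$ by the shear substitution $(\xi,\eta)\mapsto(\xi,\eta-\xi X^n)$, a volume-preserving linear automorphism of $\RR^{2(n-1)}$ that is smooth in $X^n$ and hence acts continuously on $\mathcal S'$. In the new unknown $\hat W(X^n,\xi,\eta):=\hat U(X^n,\xi,\eta-\xi X^n)$, equation \eqref{FPb_FT} becomes the $\mathcal S'(\RR^{2(n-1)})$-valued ODE
\begin{equation}\label{FPb_ODE}
\partial_{X^n}\hat W=-\big(\tilde\sigma(X^n)|\eta-\xi X^n|^2+\tilde\lambda(X^n)\big)\hat W+\hat F\big(X^n,\xi,\eta-\xi X^n\big),\qquad \hat W|_{X^n=0}=\hat G,
\end{equation}
whose coefficient is, at each fixed $X^n$, multiplication by a nonnegative polynomial in $(\xi,\eta)$ that depends continuously on $X^n$, and whose propagator $e^{-\int_t^{X^n}(\tilde\sigma(s)|\eta-\xi s|^2+\tilde\lambda(s))\,ds}$ is a Gaussian Fourier multiplier acting continuously on $\mathcal S'$ and depending continuously on $(t,X^n)$. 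Duhamel's formula then produces the unique solution of \eqref{FPb_ODE}; undoing the substitution --- so that the boundary term acquires the argument $\eta+\xi X^n$ and the source at height $t$ the argument $\eta+\xi(X^n-t)$ --- gives the closed form
\begin{equation}\label{FPb_FT_sol}
\begin{aligned}
\hat U(X^n,\xi,\eta)&=e^{-\int_0^{X^n}\tilde\lambda(s)ds}\,e^{-\int_0^{X^n}\tilde\sigma(s)|\eta+\xi(X^n-s)|^2ds}\,\hat G(\xi,\eta+\xi X^n)\\
&\quad+\int_0^{X^n}e^{-\int_t^{X^n}\tilde\lambda(s)ds}\,e^{-\int_t^{X^n}\tilde\sigma(s)|\eta+\xi(X^n-s)|^2ds}\,\hat F\big(t,\xi,\eta+\xi(X^n-t)\big)\,dt.
\end{aligned}
\end{equation}
Uniqueness in $C^1(\RR_+;\mathcal S')$ is immediate, since the difference of two solutions of \eqref{FPb_FT} with zero data solves a homogeneous linear ODE with zero initial value and therefore vanishes.

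It then remains to recognise \eqref{FPb_FT_sol} as the Fourier transform of \eqref{FPb_sol_form}. Expanding $|\eta+\xi(X^n-s)|^2$ and writing $E_k(X^n)=\int_0^{X^n}s^k\tilde\sigma(s)ds$ and $\tilde E_k(X^n;t)=\int_t^{X^n}s^k\tilde\sigma(s)ds$, the shift $\eta\mapsto\eta+\xi X^n$ collapses the quadratic exponents in \eqref{FPb_FT_sol} to $E_0|\eta|^2-2E_1\xi\cdot\eta+E_2|\xi|^2$, resp.\ the same expression with the $E_k$ replaced by $\tilde E_k$. Two elementary facts then finish the argument. First, $\mathfrak F^{-1}_{X'}\mathfrak F^{-1}_V[\Psi(\xi,\eta+\xi X^n)]=\tau_{X^nV}\big[\mathfrak F^{-1}_{X'}\mathfrak F^{-1}_V\Psi\big]$ --- the frequency shift $\eta\mapsto\eta+\xi X^n$ is precisely free transport in $X'$ --- and likewise the argument $\eta+\xi(X^n-t)$ of $\hat F$ produces the composite pattern $\tau_{X^nV}\circ\tau_{-tV}$. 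Second, a Gaussian Fourier integral shows that, up to the $(2\pi)$-normalisation of the convolution, $e^{-(E_2|\xi|^2-2E_1\xi\cdot\eta+E_0|\eta|^2)}$ equals $\mathfrak F_{X'}\mathfrak F_V[{\bf H}_1]$ with ${\bf H}_1$ as in \eqref{def:H1}, and similarly for ${\bf H}_2(t)$. The nondegeneracy of these Gaussians --- $E_0E_2-E_1^2>0$ for $X^n>0$, resp.\ $\tilde E_0\tilde E_2-\tilde E_1^2>0$ for $t<X^n$ --- is the strict form of the Cauchy--Schwarz inequality for the measure $\tilde\sigma(s)\,ds$; the degeneration as $X^n\to0^+$ is consistent with ${\bf H}_1\to\delta$, $\tau_0=\id$, and recovers $U|_{X^n=0}=G$. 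Finally, $U\in C^1(\RR_+;\mathcal S')$: the explicit formula gives $\hat U\in C(\RR_+;\mathcal S')$, whence \eqref{FPb_FT} shows $\partial_{X^n}\hat U\in C(\RR_+;\mathcal S')$ as well.

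The underlying computations are classical and go back to Eyges \cite{E}; the only genuine obstacle is the distributional bookkeeping --- making \eqref{FPb_FT} rigorous in $\mathcal S'$, justifying the shear change of variables and the Duhamel representation for an $\mathcal S'$-valued ODE whose coefficient is an \emph{unbounded} (polynomial) multiplier, and tracking the Gaussian Fourier transforms and translations so that one lands exactly on \eqref{def:H1} with the stated constants. What legitimises everything is that the propagator is a continuous Gaussian multiplier on $\mathcal S'$, in fact smoothing for $X^n>t$, depending continuously on $(t,X^n)$; hence the Duhamel integral converges in $\mathcal S'$ and every manipulation above is justified.
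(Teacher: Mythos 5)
Your proposal is correct and follows essentially the same route as the paper: Fourier transform in the transverse variables $(X',V)$, a shift $\eta \mapsto \eta - \xi X^n$ to straighten the free transport (your shear substitution $\hat W(X^n,\xi,\eta)=\hat U(X^n,\xi,\eta-\xi X^n)$ is exactly the paper's $\hat{\mathbf U}$ up to the $e^{\int_0^{X^n}\tilde\lambda}$ integrating factor, which you keep inside the Duhamel formula rather than absorbing into the unknown), reduction to an $\mathcal S'$-valued ODE in $X^n$, and inverse Gaussian Fourier transform to recover ${\bf H}_1,{\bf H}_2$ and the translations $\tau_{X^nV}$, $\tau_{-tV}$. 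The only difference is the order in which the attenuation and the modulation are peeled off, which is a bookkeeping choice rather than a genuinely different argument.
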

\begin{proof}
A solution $U$ to \eqref{FPb} should satisfy (formally)
\begin{equation*}
(\partial_{X^n}+\ii V\cdot\xi -\tilde{\sigma}(X^n)\Delta_V + \tilde{\lambda})\mathfrak{F}_{X'}[U]=\mathfrak{F}_{X'}[F].
\end{equation*}
Define 
\begin{equation*}
\begin{aligned}
&\bfU(\xi,X^n,V) = e^{ \ii X^n(V\cdot \xi) + \int^{X^n}_0\tilde{\lambda}(t)dt}\mathfrak{F}_{X'}[U]\\
&\text{and}\quad \bfF(\xi,X^n,V) = e^{ \ii X^n(V\cdot \xi) + \int^{X^n}_0\tilde{\lambda}(t)dt}\mathfrak{F}_{X'}[F],
\end{aligned}
\end{equation*}
for which the following equality holds
\begin{equation*}
\partial_{X^n}\bfU -\tilde{\sigma}(X^n)e^{ \ii X^n(V\cdot \xi) + \int^{X^n}_0\tilde{\lambda}(t)dt}\Delta_V\big(e^{- \ii X^n(V\cdot \xi) - \int^{X^n}_0\tilde{\lambda}(t)dt}\bfU\big)=\bfF.
\end{equation*}
Denoting now $\hat{\bfU} = \mathfrak{F}_V[\bfU](\xi,X^n,\eta)$ and $\hat{\bfF} = \mathfrak{F}_V[\bfF](\xi,X^n,\eta)$, and recalling the identity 
\begin{equation*}
\mathfrak{F}_V[e^{\ii X^n(V\cdot\xi)}h(V)](\eta) = \mathfrak{F}_V[h(V)](\eta-X^n\xi),
\end{equation*}
by applying FT with respect to $V$ in the previous equality, we obtain the following equation
\begin{equation*}
\partial_{X^n}\hat{\bfU} + \tilde{\sigma}(X^n)|\eta - X^n\xi|^2\hat{\bfU}=\hat{\bfF},
\end{equation*}
which we endow with the boundary condition $\hat{\bfU}(\xi,0,\eta) = \mathfrak{F}_{X',V}[G](\xi,\eta)=:\bfG(\xi,\eta)$. $\hat{\bfU}$ is then given by
\begin{equation*}
\hat{\bfU}(\xi,X^n,\eta) = \bfG(\xi,\eta)e^{-\int^{X^n}_0|\eta - t\xi|^2\tilde{\sigma}(t)dt} + \int^{X^n}_0e^{-\int^{X^n}_t|\eta - s\xi|^2\tilde{\sigma}(s)ds}\hat{\bfF}(\xi,t,\eta)dt,
\end{equation*}
with $\hat{\bfF}(\xi,t,\eta) = e^{\int^t_0\tilde{\lambda}(s)ds}\mathfrak{F}_{X',V}[F](\xi,t,\eta - t\xi)$. Consequently, the FT of $U$ has the form
\begin{equation*}
\begin{aligned}
\mathfrak{F}_{X',V}[U](\xi,X^n,\eta) &= e^{-\int^{X^n}_0\tilde{\lambda}(t)dt}\hat{\bfU}(\xi,X^n,\eta + X^n\xi).
\end{aligned}
\end{equation*}
The explicit expression for $U$ follows directly from the previous two equalities.
\end{proof}
\begin{remark}
A quick application of Plancherel's identity implies the following integration by part formula that will be used later: if $U$ is a pencil-beam with null interior source and boundary condition $G=\delta(X'-Y')\delta(V-W)$, then for any $\Phi\in C(\RR^{n}_+\times\RR^{n-1}))\cap H^1(\RR_+;L^2(\RR^{n-1}\times\RR^{n-1}))$, 
\begin{equation}\label{IBPformula}
\begin{aligned}
\int_{\RR^{n}_+\times\RR^{n-1}}U\partial_{X^n}\Phi dX'dX^ndV &= -\int_{\RR^{n}_+\times\RR^{n-1}}\partial_{X^n}U\Phi dX'dX^ndV- \Phi(Y',0,W).\\
\end{aligned}
\end{equation}
\end{remark}
\medskip

The approximation analysis of the next section requires estimates on various derivatives of the solutions to Fermi pencil-beam equation. This is not an issue for large values of $(X,V)$ due to the exponential decay at infinity but becomes more tedious when approaching $X^n=0$, where the pencil-beam solutions are singular.  The precise integrals that need to be controlled are summarized in the following lemma.

\begin{lemma}\label{lemma:U_integrals}
Let $U$ be a solution to \eqref{FPb}-\eqref{FPb_bc} for $F=0$ and $G=\delta(X')\delta(V-\Theta)$, for some $\Theta\in\RR^{n-1}$, and coefficients $\tilde{\sigma}\in C^3(\bar{\RR}_+)$, $\tilde{\lambda}\in C(\bar{\RR}_+)$. The integrals 
\begin{equation*}\label{norms_to_estimate_lemma}
\||X^i-X^n\Theta^i|^l|V^j|^m\partial_{X'}^p\partial_{V}^q U\|_{L^1},\quad i,j\in\{1,\dots,n-1\}
\end{equation*}
are finite:
\begin{itemize}
\item[(i)] in general for any integers $l,m\geq 0$ and multi-indices $p,q\geq 0$ so that $3l\geq 3|p|+|q|$;
\item[(ii)] and for any $l,m\geq 0$ and $p,q\geq 0$ so that $3l+m\geq 3|p|+|q|$ if $\Theta=0$.
\end{itemize}
Furthermore, the same conclusion holds for a solution $\bfU$ to \eqref{FPb}-\eqref{FPb_bc} with $G=0$ and $F=((X'-X^n\Theta)\cdot\xi(X^n))\partial^q_VU$, $|q|\leq 3$ and $\xi$ continuous and bounded in $\bar{\RR}_+$ (this will be used in the proof of lemma \ref{lemma:ord1_approx}).
\end{lemma}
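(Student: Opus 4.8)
The plan is to work from the explicit Gaussian representation in Proposition~\ref{prop:FBP_wellposedness}. For $F=0$ and $G=\delta(X')\delta(V-\Theta)$ it gives $U(X,V)=e^{-\int_0^{X^n}\tilde\lambda(s)ds}\,\mathbf H_1\big(X^n;\,X'-X^nV,\ V-\Theta\big)$, where $\mathbf H_1(X^n;\cdot,\cdot)$ denotes the kernel \eqref{def:H1} as a function of its two $(n-1)$-dimensional arguments with $X^n$ a parameter. Inverting the quadratic form in \eqref{def:H1}, $\mathbf H_1(X^n;\cdot,\cdot)$ is a centered probability density on $\RR^{2(n-1)}$ with covariance $\left(\begin{smallmatrix}2E_2 I&-2E_1 I\\-2E_1 I&2E_0 I\end{smallmatrix}\right)$, $E_k(X^n)=\int_0^{X^n}s^k\tilde\sigma(s)ds$, and $E_k(X^n)\asymp(X^n)^{k+1}$ as $X^n\to0^+$ (the regularity of $\tilde\sigma$ makes $(X^n)^{-(k+1)}E_k(X^n)$ extend continuously and positively to $0$) and as $X^n\to\infty$ (since $0<\tilde\sigma_0\le\tilde\sigma$). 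Thus, composing the anisotropic rescaling $\bar X'=(X^n)^{-3/2}X'$, $\bar V=(X^n)^{-1/2}V$ with a shear converging to a fixed invertible map as $X^n\to0$, $\mathbf H_1(X^n;\cdot,\cdot)$ becomes a fixed standard Gaussian $G_0$ up to a prefactor, and the Jacobian of this linear change is $\asymp(X^n)^{2(n-1)}$, cancelling the prefactor (up to a fixed constant, as $\mathbf H_1$ has unit mass). Using that a derivative of a Gaussian is a Hermite polynomial times the Gaussian with coefficients uniformly bounded after rescaling, one obtains the master estimate: for all multi-indices $\mathbf l,\mathbf m,p,q$ and all $X^n\in(0,1]$,
\[
\int_{\RR^{2(n-1)}}\!\Big(\prod_i|Y^i|^{l_i}\Big)\Big(\prod_j|W^j|^{m_j}\Big)\big|\partial_{Y'}^{p}\partial_{W}^{q}\mathbf H_1(X^n;Y',W)\big|\,dY'dW\ \le\ C\,(X^n)^{\frac32(|\mathbf l|-|p|)+\frac12(|\mathbf m|-|q|)},
\]
with the analogous bound, with a nonnegative power of $X^n$ on the right, for $X^n\ge1$.

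From the master estimate, (i)–(ii) follow by bookkeeping. Substituting $Y'=X'-X^nV$, $W=V-\Theta$ gives $|X^i-X^n\Theta^i|^l=|Y^i+X^nW^i|^l\lesssim|Y^i|^l+(X^n)^l|W^i|^l$, both pieces of ``$Y$-order'' $(X^n)^{3l/2}$ (because $(X^n)^l(X^n)^{l/2}=(X^n)^{3l/2}$), while $|V^j|^m=|W^j+\Theta^j|^m$ is of ``$W$-order'' $(X^n)^{m/2}$ when $\Theta=0$ and only $O(1)$ when $\Theta\ne0$; moreover $\partial_{X'}^p\partial_V^q U$, expanded through the shift by the chain rule, is $e^{-\int_0^{X^n}\tilde\lambda}$ times a finite sum $\sum_{q'\le q}c_{q'}(X^n)^{|q|-|q'|}(\partial_{Y'}^{p+q-q'}\partial_W^{q'}\mathbf H_1)(X'-X^nV,V-\Theta)$, and the identity $(|q|-|q'|)-\tfrac32(|p|+|q|-|q'|)-\tfrac12|q'|=-\tfrac32|p|-\tfrac12|q|$ shows that after the master estimate every summand contributes the same power. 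Hence the $(X',V)$-integral is $\le C(X^n)^{\frac32 l+\frac12 m\,\mathds{1}_{\{\Theta=0\}}-\frac32|p|-\frac12|q|}$ for $X^n\in(0,1]$, integrable at $X^n=0$ once the exponent exceeds $-1$, i.e. $3l-3|p|-|q|>-2$ (general $\Theta$) or $3l+m-3|p|-|q|>-2$ ($\Theta=0$), which hold with room under $3l\ge3|p|+|q|$ and $3l+m\ge3|p|+|q|$. Integrability on $\{X^n\ge1\}$ is immediate from $e^{-\int_0^{X^n}\tilde\lambda}\le e^{-\tilde\lambda_0(X^n-1)}$ against the polynomial growth of the kernel moments.

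For the last assertion, use the Duhamel term of \eqref{FPb_sol_form}: $\bfU(\cdot,X^n,\cdot)=\int_0^{X^n}e^{-\int_t^{X^n}\tilde\lambda}\,\tau_{X^nV}\big(\mathbf H_2(\cdot;t)\ast\tau_{-tV}F(\cdot,t,\cdot)\big)\,dt$, with $F(\cdot,t,\cdot)=\big((X'-t\Theta)\cdot\xi(t)\big)\partial_V^{q_0}U(\cdot,t,\cdot)$ (writing $q_0$ for the index in $F$, and keeping $q$ for the derivative applied to $\bfU$). Two structural facts drive the estimate: $E_k(t)+\tilde E_k(X^n;t)=E_k(X^n)$, so that $\mathbf H_2(\cdot;t)\ast(\text{shifted }\mathbf H_1(t;\cdot,\cdot))$ is precisely $\mathbf H_1(X^n;\cdot,\cdot)$ — i.e. the Duhamel slice is, modulo the shears, a $\partial_V^{q_0}$- and polynomial-decorated copy of the \emph{ambient} pencil-beam kernel $\mathbf H_1(X^n;\cdot,\cdot)$; and $\|F(\cdot,t,\cdot)\|_{L^1}$, and more generally the weighted/differentiated norms of $\partial_V^{q_0}U(\cdot,t,\cdot)$, are controlled by parts (i)–(ii) applied to $U$ — this is where $|q_0|\le3$ enters, keeping the slice integrable in $t$ near $0$. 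Feeding this into the master-estimate rescaling, now at the $X^n$-scale, and integrating over $t\in(0,X^n)$, which contributes one extra power of $X^n$, yields finiteness under exactly the hypotheses (i)–(ii).

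The steps I expect to matter are two. First, the ``no loss of homogeneity'' under the shift/shears: whenever the chain rule through $\tau_{X^nV}$ (or $\tau_{-tV}$) converts a $\partial_V$ into a $\partial_{X'}$ it is paired with a factor $X^n$ (or $t$), which in the anisotropic counting ($\pm\tfrac32$ per $X'$, $\pm\tfrac12$ per $V$) compensates exactly — the $q'$-independent identity above — so that $\partial_{X'}^p\partial_V^q$ acting on a pencil beam carries the same homogeneity $(X^n)^{-\frac32|p|-\frac12|q|}$ as on $\mathbf H_1$ itself. Second, and more delicate, in the source-term part one must \emph{not} distribute $\partial_V^q\bfU$ naively onto $F$: the individual chain-rule terms $(t-X^n)^{|q|-|q'|}\mathbf S(X^n,t)[\partial_{X'}^{p+q-q'}\partial_V^{q'}F(\cdot,t,\cdot)]$ fail to be $t$-integrable because $t-X^n$ is only $O(X^n)$ (not $O(t)$), and it is essential to keep the convolution with $\mathbf H_2(\cdot;t)$ intact — it smooths at the $(X^n-t)$-scale — or, equivalently, to work with $\bfU$ through its Fourier transform where the necessary cancellations are manifest; granting this, the remainder is a routine, if lengthy, Leibniz expansion over weights, derivatives and the Duhamel integral, plus the elementary fact that the resulting exponent of $X^n$ stays above $-1$ under the stated hypotheses.
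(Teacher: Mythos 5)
Your proposal is correct and follows essentially the same route as the paper: write out the explicit Gaussian kernel from Proposition~\ref{prop:FBP_wellposedness}, perform the anisotropic rescaling $X'\sim (X^n)^{3/2}$, $V\sim (X^n)^{1/2}$ using the Taylor behavior $E_k(X^n)\asymp (X^n)^{k+1}$ and $E_0E_2-E_1^2\asymp (X^n)^4$ near $0$, account for each weight/derivative by $\pm\tfrac32$ or $\pm\tfrac12$ powers of $X^n$, and integrate in $X^n$ with the exponential weight $\eta(X^n)$ controlling the far field. Your ``master estimate'' is exactly the power count the paper performs kernel moment by kernel moment (via the Gaussian integrals $I_k$), and your observation that the chain rule through $\tau_{X^nV}$ is homogeneity-neutral is the same bookkeeping fact the paper uses. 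One genuine refinement in your sketch is the explicit semigroup identity $E_k(t)+\tilde E_k(X^n;t)=E_k(X^n)$ which collapses $\mathbf H_2(X^n;t)*\mathbf H_1(\cdot,t,\cdot)$ into $\mathbf H_1(\cdot,X^n,\cdot)$: the paper writes the combined kernel $\mathbf H=\mathbf H_2*\mathbf H_1$ with coefficients $\hat a,\hat b,\hat c$ and re-runs the power count on it, whereas you make explicit that this kernel is simply the ambient $\mathbf H_1$ at height $X^n$, independent of $t$ (the paper's $\hat a(X^n;t)=a(X^n)+a_t(X^n)$ appears to be a typo for $a(t)+a_t(X^n)=a(X^n)$, consistent with your claim). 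You also correctly flag the pitfall the paper silently avoids: distributing $\partial_{X'}^p\partial_V^q$ all the way onto $F(\cdot,t,\cdot)$ at the $t$-scale would produce non-integrable singularities near $t=0$; the paper instead pushes derivatives onto the outer convolution and converts the polynomial $(X'-t\Theta)$ into $t$-weighted derivatives via $\Sigma^{-1}\nabla\mathbf H_1$ with coefficients $tb-c\asymp t^3$, $ta-b\asymp t^2$. Your proposal stops at a sketch level for this Leibniz/IBP bookkeeping (which occupies the bulk of the paper's proof), but the framework you lay out is the right one and matches what the paper executes in detail.
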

\begin{proof}
Let us write $\eta = \eta(X^n):= e^{-\int_0^{X^n}\tilde{\lambda}(s)ds}$  so that for $F=0$ and $G=\delta(X')\delta(V-\Theta)$ the Fermi pencil-beam equation can be written as
\begin{equation}\label{pencil_beam}
\begin{aligned}
U(X,V)=\eta(X^n)\tau_{X^nV}{\bf H}_1(X,V-\Theta).
\end{aligned}
\end{equation}
For any polynomial $p(X',V)$ and function $f(X',V)$ such that the following quantities are finite, we have
\begin{equation*}
\int p(X',V)[\tau_{X^nV}f](X',V)dX'dV = \int p(X'+X^nV,V)f(X',V)dX'dV,
\end{equation*}
and thus, the previous identity along with a change of variables gives
\begin{equation}\label{integrals_H1}
\begin{aligned}
&\||X^i-X^n\Theta^i|^l|V^j|^m\partial_{X'}^p\partial_{V}^q U\|_{L^1}\\
&\leq \sum_{\tilde{q}\leq q} \binom{q}{\tilde{q}}\|\eta(X^n)|X^i+X^nV^i)|^l|V^j+\Theta^j|^m(X^n)^{|\tilde{q}|}\partial_{X'}^{p+\tilde{q}}\partial_{V}^{q-\tilde{q}} \bH_1(X,V)\|_{L^1}.\\
\end{aligned}
\end{equation}
We write the kernel $\bH_1(X,V)$ as
\begin{equation}\label{kernel_H1}
\begin{aligned}
\bH_1(X,V) &=\frac{1}{(4\pi\sqrt{\Delta})^{n-1}}e^{-(\alpha|X'|^2 + 2\beta X'\cdot V + \gamma|V|^2)},
\end{aligned}
\end{equation}
where $\Delta = ac-b^2>0$, 
\begin{equation}\label{coeff_kernel}
a(X^n) = \int^{X^n}_0\tilde{\sigma}(s)ds,\quad b(X^n) = \int^{X^n}_0s\tilde{\sigma}(s)ds,\quad c(X^n) = \int^{X^n}_0s^2\tilde{\sigma}(s)ds,
\end{equation}
with $\tilde{\sigma}>0$, and  
\begin{equation*}
\alpha = \frac{a}{4\Delta},\quad \beta = \frac{b}{4\Delta}\quad\text{and}\quad \gamma = \frac{c}{4\Delta}.
\end{equation*}
Near $X^n=0$ and for any extension of $\tilde{\sigma}\in C^3([0,\infty))$ to $\RR$, we deduce from Taylor's theorem that there is $h_0>0$ and a function $h:\RR\to\RR$ such that
\begin{equation}\label{Taylor_Delta}
ac-b^2 = (X^n)^4(h_0 + h(X^n)),
\end{equation}
with $h(X^n)\to 0$ as $X^n\to 0$. In addition, in the limits $X^n\to 0$ and $X^n\to\infty$, the coefficients $a$, $b$ and $c$ behave respectively as $X^n$, $(X^n)^2$ and $(X^n)^3$, and this implies 
\begin{equation*}
\alpha\approx (X^n)^{-3},\quad\beta\approx (X^n)^{-2},\quad\text{and}\quad \gamma \approx (X^n)^{-1}.
\end{equation*}

To estimate the terms on the right hand side of \eqref{integrals_H1} we need upper bounds for integrals of the form $I_k:=\int^{\infty}_{-\infty}|t|^ke^{-\alpha t^2+2\beta t}dt$, for $k\geq 0$. We list a few of them:
\begin{equation*}
I_k = \left\{
\begin{array}{ll}
\sqrt{\frac{\pi}{\alpha}}e^{\frac{\beta^2}{\alpha}}&k=0,\\
\frac{1}{\alpha}(1+\beta\sqrt{\frac{\pi}{a}}\text{\rm erf}\big(\frac{\beta}{\sqrt{\alpha}}\big)e^{\frac{\beta^2}{\alpha}})\;\leq\; \frac{1}{\alpha}(1+|\beta|\sqrt{\frac{\pi}{\alpha}}e^{\frac{\beta^2}{\alpha}})&k=1,\\
\frac{\sqrt{\pi}(2\alpha+4\beta^2)}{4\alpha^{5/2}}e^{\frac{\beta^2}{\alpha}}&k=2,\\
\frac{1}{\alpha^2}\big(1+\frac{\beta^2}{\alpha} +(\frac{2\beta^2}{\alpha}+3)\frac{\beta}{2}\sqrt{\frac{\pi}{\alpha}}\text{\rm erf}\big(\frac{\beta}{\sqrt{\alpha}}\big)e^{\frac{\beta^2}{\alpha}} \big)&k=3.\\
\hspace{5em}\leq \frac{1}{\alpha^2}\big(1+\frac{\beta^2}{\alpha} +(\frac{2\beta^2}{\alpha}+3)\frac{|\beta|}{2}\sqrt{\frac{\pi}{\alpha}}e^{\frac{\beta^2}{\alpha}} \big)
\end{array}
\right.
\end{equation*}
Performing the integrations over $X'$ and $V$ and using the values of the integrals above one can see that each factor $|X^i|$ scales in the estimates as $(X^n)^{3/2}$ since it bring a division by a factor $\sqrt{\alpha}$, while each $|V^j|$ scales as $(X^n)^{1/2}$ (division by $\sqrt{\gamma}$). Symmetrically, every derivative taken with respect to an $X^i$ bring a factor $\alpha X^i +\beta V^j$ to the estimates which after integration translates into a division by $(X^n)^{3/2}$, and similarly every derivative with respect to $V^j$ brings to the estimates a factor $\beta X^j + \gamma V^j$ which leads to a division by $(X^n)^{1/2}$. The condition imposed on exponent and multi-indices in \eqref{norms_to_estimate_lemma} then implies that after integrating with respect to $X'$ and $V$, integrals of the form
$$
\|(X^n)^{|\tilde{q}|}|X^i|^{l}|V^j|^{\tilde{m}}\partial_{X'}^{p+\tilde{q}}\partial_{V}^{q-\tilde{q}} U\|_{L^1}\quad\text{and}\quad\|(X^n)^{l+|\tilde{q}|}|V^j|^{l+\tilde{m}}\partial_{X'}^{p+\tilde{q}}\partial_{V}^{q-\tilde{q}} U\|_{L^1},
$$
with $\tilde{m} \in\{0,m\}$, are bounded by a constant factor times $\int^\infty_0\eta(X^n)(X^n)^{s}dX^n$, 
for some nonnegative rational number $s$ depending on $l,m,p$ and $q$. 
The right hand side is finite since $s$ is nonnegative and the exponential decay at infinity of $\eta$.

The two cases in the statement of the lemma derive from the previous estimation by noticing that for $\Theta = 0$ there is no integral with $\tilde{m}=0$, and we just need $3l+m\geq 3|p|+|q|$ instead of the stronger requirement $3l\geq 3|p|+|q|$.

The second part of the proof consist in estimating the same integrals with $U$ replaced by the more intricate function $\bfU$. We write
\begin{equation*}
\begin{aligned}
\bfU(X,V)= \int^{X^n}_0\eta_t(X^n)\tau_{X^nV}({\bf H}_2(X^n;t)*\tau_{-tV}F(t)dt,
\end{aligned}
\end{equation*}
for $\eta_t := e^{-\int^{X^n}_t\tilde{\lambda}(s)ds}$ and kernel
\begin{equation}\label{kernel_H2}
\bH_2(X,V;t) = \frac{1}{(4\pi\sqrt{\Delta_t})^{n-1}}e^{-(\alpha_t|X'|^2 + 2\beta_tX'\cdot V + \gamma_t|V|^2)},\quad t\in(0,X^n).
\end{equation}
Here we use the notation $\Delta_t = a_tc_t-b_t^2$ for
\begin{equation}\label{coeff_kernel_t}
a_t(X^n) = \int^{X^n}_t\tilde{\sigma}(s)ds,\quad b_t(X^n) = \int^{X^n}_ts\tilde{\sigma}(s)ds,\quad c_t(X^n) = \int^{X^n}_ts^2\tilde{\sigma}(s)ds,
\end{equation} 
and $\alpha_t = \frac{a_t}{4\Delta_t}$, $\beta_t = \frac{b_t}{4\Delta_t}$, $\gamma_t = \frac{c_t}{4\Delta_t}$. 

Let us analyze $F(t)=(X'-t\Theta)\cdot\xi(t)\partial^r_{V}U(t)$ (with $|r|\leq 3$) first. We see that 
\begin{equation*}
\begin{aligned}
\tau_{-tV}F(t) &= \tau_{-tV}[(X'-t\Theta)\cdot\xi(t)\partial^r_V(\eta(t)\tau_{tV}\bH_1(X',t,V-\Theta))]\\
&= \tau_{-tV}[(X'-t\Theta)\cdot\xi(t)\eta(t) \sum_{\tilde{r}\leq r}\binom{r}{\tilde{r}}t^{|\tilde{r}|}\tau_{tV}[\partial_{X'}^{\tilde{r}}\partial_V^{r-\tilde{r}}\bH_1(X',t,V-\Theta)]]\\
&= \sum_{k=1}^{n-1}\sum_{\tilde{r}\leq r}\binom{r}{\tilde{r}}t^{|\tilde{r}|}\eta(t)\xi^k(t)(X^k+t(V^k-\Theta^k))\partial_{X'}^{\tilde{r}}\partial_V^{r-\tilde{r}}\bH_1(X',t,V-\Theta).
\end{aligned}
\end{equation*}
Denoting by $e_k$ the multi-index with a 1 in the $k$th-position and the rest all zeros, and also writing $r = (r_1,\dots,r_{n-1})$, then
\begin{equation*}
\begin{aligned}
&(X^k+t(V^k-\Theta^k))\partial_{X'}^{\tilde{r}}\partial_V^{r-\tilde{r}}\bH_1(X',t,V-\Theta) \\
&\hspace{5em}= \partial_{X'}^{\tilde{r}}\partial_V^{r-\tilde{r}}[(X^k+t(V^k-\Theta^k))\bH_1(X',t,V-\Theta)] \\
&\hspace{6em}-(1-\delta_{0\tilde{r}_k})\partial_{X'}^{\tilde{r}-e_k}\partial_V^{r-\tilde{r}}\bH_1(X',t,V-\Theta)\\
&\hspace{6em}-(1-\delta_{0(r_k-\tilde{r}_k)})t\partial_{X'}^{\tilde{r}}\partial_V^{r-\tilde{r}-e_k}\bH_1(X',t,V-\Theta),
\end{aligned}
\end{equation*}
with $\delta_{ij}$ the Kronecker delta. Convolving $\tau_{-tV}F(t)$ with $\bH_2$ gives several terms of the form
\begin{equation*}
\begin{aligned}
&{\bf H}_2*t^{|\tilde{r}|+|s_2|}\eta(t)\xi^i(t)\partial_{X'}^{\tilde{r}-s_1}\partial_V^{r-\tilde{r}-s_2}[(X^k+t(V^k-\Theta^k))^{1-|(s_1,s_2)|}\bH_1(X',t,V-\Theta)]
\end{aligned}
\end{equation*}
with $|(s_1,s_2)|\leq1$. 
Therefore, the estimation of $\||X^i-X^n\Theta^i|^l|V^j|^m\partial_{X'}^p\partial_{V}^q \bfU\|_{L^1}$ reduces to obtaining upper bounds for
\begin{equation}\label{est_1}
\begin{aligned}
&\big\||X^i+X^n(V^i-\Theta^i)|^l|V^j|^m(X^n)^{|\tilde{q}|}\int^{X^n}_0t^{|\tilde{r}|}\eta(t)\xi^k(t)\eta_t(X^n) \\
&\hspace{0em}\times \partial_{X'}^{p+\tilde{q}}\partial_{V}^{q-\tilde{q}}\big({\bf H}_2*\partial_{X'}^{\tilde{r}-s_1}\partial_V^{r-\tilde{r}-s_2}[(X^k+t(V^k-\Theta^k))^{1-|(s_1,s_2)|}\bH_1(X',t,V-\Theta)]\big) dt\big\|_{L^1}
\end{aligned}
\end{equation}
with $i,j,k\in\{1,\dots,n-1\}$, and multi-indices $\tilde{p}\leq p$, $\tilde{q}\leq q$, $\tilde{r}\leq r$, $|(s_1,s_2)|\leq 1$.

Notice that
\begin{equation*}
\begin{aligned}
&\partial_{X'}^{p+\tilde{q}}\partial_{V}^{q-\tilde{q}} \big({\bf H}_2*\partial_{X'}^{\tilde{r}-s_1}\partial_V^{r-\tilde{r}-s_2}[(X^k+t(V^k-\Theta^k))^{1-|(s_1,s_2)|}\bH_1(X',t,V-\Theta)] \big)\\
&\hspace{1em}= \partial_{X'}^{p+\tilde{q}+\tilde{r}-s_1}\partial_{V}^{q-\tilde{q}+r-\tilde{r}-s_2} \big({\bf H}_2*[(X^k+t(V^k-\Theta^k))^{1-|(s_1,s_2)|}\bH_1(X',t,V-\Theta)] \big).
\end{aligned}
\end{equation*}
On the other hand, denoting by $\Sigma=\left(\begin{matrix}\alpha&\beta\\\beta&\gamma\end{matrix}\right)$, then $\nabla_{X^k,V^k}\bH_1 = -2\bH_1\Sigma\left(\begin{matrix}X^k\\V^k\end{matrix}\right)$ and consequently
\begin{equation*}
(X^k+tV^k)\bH_1=-\frac{1}{2}\Big((\Sigma^{-1}\nabla_{X^k,V^k}\bH_1)_1+t(\Sigma^{-1}\nabla_{X^k,V^k}\bH_1)_2\Big),
\end{equation*}
where $\Sigma^{-1} = 4\left(\begin{matrix}c&-b\\-b&a\end{matrix}\right)$. This yields
\begin{equation*}
\begin{aligned}
(X^k+t(V^k-\Theta^k))\bH_1(X',t,V-\Theta) &= 2(tb(t)-c(t))\partial_{X^k}\bH_1(X',t,V-\Theta)\\
&\hspace{1em}-2(ta(t)-b(t))\partial_{V^k}\bH_1(X',t,V-\Theta).
\end{aligned}
\end{equation*}
Therefore,
\begin{equation*}
\begin{aligned}
&\partial_{X'}^{p+\tilde{q}}\partial_{V}^{q-\tilde{q}} \big({\bf H}_2(X^n;t)*\partial_{X'}^{\tilde{r}-s_1}\partial_V^{r-\tilde{r}-s_2}[(X^k+t(V^k-\Theta^k))^{1-|(s_1,s_2)|}\bH_1(X',t,V-\Theta)] \big)\\
&\hspace{0em}= \left\{\begin{array}{ll}
\partial_{X'}^{p+\tilde{q}+\tilde{r}-s_1}\partial_{V}^{q-\tilde{q}+r-\tilde{r}-s_2} \big({\bf H}_2*\bH_1(X',t,V-\Theta) \big),&\hspace{-.5em}|(s_1,s_2)|=1,\\
&\\
2(tb(t)-c(t))\partial_{X'}^{p+\tilde{q}+\tilde{r}+e_k}\partial_{V}^{q-\tilde{q}+r-\tilde{r}} \big({\bf H}_2*\bH_1(X',t,V-\Theta) \big)\\
\hspace{1em} -2(ta(t)-b(t))\partial_{X'}^{p+\tilde{q}+\tilde{r}}\partial_{V}^{q-\tilde{q}+r-\tilde{r}+e_k} \big({\bf H}_2*\bH_1(X',t,V-\Theta) \big),& \hspace{-.5em}|(s_1,s_2)|=0.
\end{array}
\right.
\end{aligned}
\end{equation*}
The factors $(tb(t)-c(t))$ and $(ta(t)-b(t))$ behave respectively as $t^3$ and $t^2$, in the limit $t\to 0$. 

We now write $\bH = \bH_2*\bH_1$, therefore
\begin{equation}\label{kernel_H4}
\bH(X,V;t) = \frac{1}{(4\pi\sqrt{\hat{\Delta}})^{n-1}}e^{-\frac{1}{4\Delta}(\hat{a}|X'|^2 + 2\hat{b}X'\cdot V + \hat{c}|V|^2)},\quad t\in(0,X^n),
\end{equation}
for $\hat{\Delta} = \hat{a}\hat{c}-\hat{b}^2$ and
\begin{equation*}
\hat{a}(X^n;t) =a(X^n)+a_t(X^n),\;\; \hat{b}(X^n;t)=b(X^n)+b_t(X^n) ,\;\; \hat{c}(X^n;t)=c(X^n)+c_t(X^n) .
\end{equation*}
Noting that $\eta(X^n) = \eta_t(X^n)\eta(t)$ and recalling that $\xi(t)$ is bounded, we deduce that the estimation of \eqref{est_1} reduces to bounding from above several integrals of the form
\begin{equation*}
\begin{aligned}
&\int_0^\infty \eta(X^n)(X^n)^{|\tilde{q}|} \\
&\times\int_0^{X^n} t^{|\tilde{r}|}\||X^i+X^n(V^i-\Theta^i)|^l|V^j|^mP_{X',V}\bH(X',X^n,V-\Theta;t)\|_{L^1_{X',V}}dtdX^n,
\end{aligned}
\end{equation*}
with $P_{X',V}$ any of the following differential operators:
\begin{equation*}
\begin{aligned}
&\partial_{X'}^{p+\tilde{q}+\tilde{r}-s_1}\partial_{V}^{q-\tilde{q}+r-\tilde{r}-s_2},\quad |(s_1,s_2)|=1\\
&2(tb(t)-c(t))\partial_{X'}^{p+\tilde{q}+\tilde{r}+e_k}\partial_{V}^{q-\tilde{q}+r-\tilde{r}}\quad\text{and}\quad 2(ta(t)-b(t))\partial_{X'}^{p+\tilde{q}+\tilde{r}}\partial_{V}^{q-\tilde{q}+r-\tilde{r}+e_k}.
\end{aligned}
\end{equation*}
These last expressions are bounded by the same arguments as in the first part of the proof.
\end{proof}

We conclude this section with a few words regarding the adjoint problem associated to \eqref{FPb}-\eqref{FPb_bc}.
Following an analogous argument as in the existence result for equation \eqref{FPb}, if we are given a bounded and compactly supported function $\Psi$, the solution to the {\em backward Fermi pencil-beam} system
\begin{equation}\label{backward_FPB}
- \tilde{\sigma}\Delta_V W  - V \cdot\nabla_{X'} W- \partial_{X^n}W  + \tilde{\lambda}W= \Psi,\quad (X,V)\in \RR^{n}_+\times\RR^{n-1},
\end{equation}
augmented with a vanishing condition at infinity
\begin{equation}\label{backward_FPB_bc}
\lim_{X^n\to\infty}W = 0,
\end{equation}
can be written in the form
\begin{equation*}
W(X,V) = \int_{X^n}^\infty e^{-\int^t_{X^n}\tilde{\lambda}(s)ds}\tau_{-X^nV}({\bf H}_3(t)*\tau_{tV}\Psi(t))dt,
\end{equation*}
for a given Gaussian kernel ${\bf H}_3$ defined similarly as $\bH_1$ in \eqref{def:H1} but with coefficients $E_k$ replaced by $\int^t_{X^n}s^k\tilde{\sigma}(s)ds$. Formula (4.18) is derived similarly as in the proof of Proposition 4.1 and the
Xn
details of this can be found in Appendix \ref{appdx:bFPBsolution}.

Using this explicit expression we directly obtain Lipschitz continuity of $W$. For a proof of this we refer the reader to Appendix \ref{appdx:proof_lemma_FPb_Lip}.
\begin{lemma}\label{lemma:FPb_Lip}
Let $\Psi\in C_c(\bar{Q})$ and Lipschitz continuous with respect to the variables $Z=(X',V)$. There exists $C>0$ so that, for $W$ solution to \eqref{backward_FPB}-\eqref{backward_FPB_bc},
\begin{equation*}
|W(Z_1,X^n) -W(Z_2,X^n) |\leq C\Lip(\Psi)|Z_1-Z_2|,
\end{equation*}
for all $Z_1,Z_2\in\RR^{2(n-1)}$ and $X^n>0$.
\end{lemma}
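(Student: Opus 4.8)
The plan is to argue directly from the explicit representation of $W$ recalled just above, unwinding the two $V$-dependent translations together with the convolution so that $W(Z,X^n)$ appears as an average of shifted, rescaled values of $\Psi$, and then to read the Lipschitz bound off that formula.

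First I would take
\begin{equation*}
W(X',V,X^n)=\int_{X^n}^{\infty}e^{-\int_{X^n}^{t}\tilde{\lambda}(s)\,ds}\,\Big[\tau_{-X^nV}\big({\bf H}_3(t)*\tau_{tV}\Psi(\cdot,\cdot,t)\big)\Big](X',V)\,dt,
\end{equation*}
and compute the bracketed term by successive changes of variables: writing out $\tau_{-X^nV}$, then the convolution in $(X',V)$, then $\tau_{tV}$, and finally renaming the two convolution variables $z,w'$, one arrives at
\begin{equation}\label{eq:W-unwound}
W(X',V,X^n)=\int_{X^n}^{\infty}e^{-\int_{X^n}^{t}\tilde{\lambda}(s)\,ds}\int_{\RR^{2(n-1)}}{\bf H}_3(z,w';t)\,\Psi\big(X'+(X^n-t)V+tw'-z,\ V-w',\ t\big)\,dz\,dw'\,dt .
\end{equation}
Two features of \eqref{eq:W-unwound} matter. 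The $Z=(X',V)$ dependence inside $\Psi$ is affine, with $X'$ entering the transversal slot with coefficient $1$, and $V$ entering the transversal slot with coefficient $(X^n-t)$ and the angular slot with coefficient $1$. And ${\bf H}_3(\cdot,\cdot;t)$ is a nonnegative Gaussian of unit $L^1$ mass: its covariance is built from $\int_{X^n}^{t}s^k\tilde{\sigma}(s)\,ds$, $k=0,1,2$, and is nondegenerate by the (strict) Cauchy--Schwarz inequality on the genuine interval $(X^n,t)$, so $\|{\bf H}_3(\cdot,\cdot;t)\|_{L^1}=1$ for every $t>X^n$.

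Given \eqref{eq:W-unwound} the estimate is routine. For $Z_1=(X_1',V_1)$ and $Z_2=(X_2',V_2)$, the arguments of $\Psi$ attached to $Z_1$ and $Z_2$ in \eqref{eq:W-unwound} differ, in the $Z$-variables, by $\big((X_1'-X_2')+(X^n-t)(V_1-V_2),\,V_1-V_2\big)$, whose length is at most $|X_1'-X_2'|+(1+t-X^n)|V_1-V_2|\le\sqrt{2}\,(1+t-X^n)\,|Z_1-Z_2|$ since $t\ge X^n$ throughout the integration. Invoking the Lipschitz hypothesis on $\Psi$ (uniform in the height variable), the unit mass of ${\bf H}_3(\cdot,\cdot;t)$, and the bound $e^{-\int_{X^n}^{t}\tilde{\lambda}}\le e^{-\tilde{\lambda}_0(t-X^n)}$ from $\tilde{\lambda}\ge\tilde{\lambda}_0>0$, one obtains after the substitution $s=t-X^n$
\begin{equation*}
|W(Z_1,X^n)-W(Z_2,X^n)|\le\sqrt{2}\,\Lip(\Psi)\,|Z_1-Z_2|\int_{0}^{\infty}e^{-\tilde{\lambda}_0 s}(1+s)\,ds=\sqrt{2}\Big(\tfrac{1}{\tilde{\lambda}_0}+\tfrac{1}{\tilde{\lambda}_0^{2}}\Big)\Lip(\Psi)\,|Z_1-Z_2|,
\end{equation*}
uniformly in $X^n>0$, which is the claim with $C=\sqrt{2}(\tilde{\lambda}_0^{-1}+\tilde{\lambda}_0^{-2})$.

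There is no genuine analytic difficulty here; the delicate step is \eqref{eq:W-unwound}, the bookkeeping for the composition of $\tau_{-X^nV}$ and $\tau_{tV}$ with the Gaussian convolution. The point to get right is that, after unwinding, $V$ reaches the transversal argument of $\Psi$ only through the \emph{difference} $X^n-t$ (equivalently, there is no $V$-shift at $t=X^n$), so the resulting $t$-weight $e^{-\tilde{\lambda}_0(t-X^n)}(1+t-X^n)$ is integrable with a bound free of $X^n$; a careless reduction retaining a bare factor $X^n$ would falsely suggest growth in the depth variable.
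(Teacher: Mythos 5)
Your argument is correct and follows the same route as the paper's proof in Appendix~\ref{appdx:proof_lemma_FPb_Lip}: unwind $\tau_{-X^nV}$, the convolution, and $\tau_{tV}$ into a single integral against the Gaussian ${\bf H}_3$, note that $(X',V)$ enters $\Psi$ affinely with the $V$-coefficient $(X^n-t)$ in the transversal slot, apply the Lipschitz hypothesis and the constant $L^1$-mass of ${\bf H}_3$, and integrate the exponential weight in $t-X^n$. Your explicit formula~\eqref{eq:W-unwound} matches the argument shifts appearing in the paper's computation, and your bookkeeping of the $(1+(t-X^n))$ factor and the resulting constant $\sqrt{2}(\tilde{\lambda}_0^{-1}+\tilde{\lambda}_0^{-2})$ is, if anything, slightly more careful than the paper's.
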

\section{1-Wasserstein comparison analysis}\label{sec:1W_analysis}
\subsection{Approximation via pencil-beams}\label{sec:Pb_approx}
We now consider the approximation of narrow beam solutions to the Fokker-Planck equation by ballistic transport and Fermi pencil-beams, which correspond to the limit $\epsilon\ll1$. The modeling of narrow beams is best modeled as beams propagating in a half space with singular boundary sources. Heuristically, each delta source term gives rise to Fokker-Planck, Fermi pencil-beam, and ballistic transport solutions. More general source terms may then be modeled as superpositions of such delta sources. We first state our comparison results for delta source terms and then consider some models of linear superposition.  As we mentioned in the introduction, all comparisons are obtained in the well-adapted notion of 1-Wasserstein distance.

\subsubsection{Delta boundary source}
We consider first the case of a delta incoming boundary condition
\begin{equation}\label{def:delta_source}
g(x,\theta) = \delta(x)\delta_{\SS^{n-1}}(\theta-\eta),\quad\text{for}\quad\eta\in\SS^{n-1}\quad\text{such that}\quad|N-\eta|=O(\epsilon^2),
\end{equation}
where $N=-\nu(0)=(0,\dots,0,1)$ and we let $u$ be the solution to \eqref{FP}-\eqref{FP_bc} from Theorem \ref{thm:FP_dist}. 

Associated to $\eta=(\eta',\eta^n)$, whose stereographic projection with respect to the south pole (see \eqref{def:sproj}) is given by
\begin{equation*}
\mathcal{S}(\eta) = \frac{\eta'}{1+\eta^n}\in \RR^{n-1},
\end{equation*}
we  set
\begin{equation*}
G(X',V)=\delta(X')\delta(V-\epsilon^{-1}\mathcal{S}(\eta)),\quad \tilde{\sigma}=\frac{1}{4}\sigma(X^n\eta)\quad\text{and}\quad \tilde{\lambda}=\lambda(X^n\eta),
\end{equation*}
where the coefficients $\tilde{\sigma}$ and $\tilde{\lambda}$ have an indirect dependence on $\epsilon$ through the direction $\eta$. Letting $U$ be the solution to \eqref{FPb} with the above choice of parameters, we define the {\em pencil-beam approximation} as
\begin{equation*}
\mathfrak{u}(x,\theta) := (2\epsilon)^{-2(n-1)}U((2\epsilon)^{-1}x',x^n,\epsilon^{-1}\mathcal{S}(\theta)).
\end{equation*}
Throughout this section, we assume $\sigma\in C^3(\bar{\RR}^n_+)$ and $\lambda \in C^2(\bar{\RR}^n_+)$. The extra regularity of $\sigma$ is needed to control the integrals that appear when applying Lemma \ref{lemma:U_integrals}.

We then have the following approximation result:
\begin{theorem}\label{thm:main1}There exists a positive constant $C=C(\|\sigma\|_{C^3},\|\lambda\|_{C^2})$ such that, for $\kappa\gtrsim 1$,
\begin{equation*}
\mathcal{W}^1_{\kappa}(u,\mathfrak{u})\leq C\epsilon^2\kappa.
\end{equation*}
\end{theorem}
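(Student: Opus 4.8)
The core idea is a duality argument: to bound $\mathcal{W}^1_\kappa(u,\mathfrak{u})$ we must estimate $\int \psi(u-\mathfrak{u})$ uniformly over test functions $\psi\in BL_{1,\kappa}(\bar{\RR}^n_+\times\SS^{n-1})$. Since $u$ is characterized (Theorem \ref{thm:FP_dist}) through pairings against solutions $\varphi$ of the backward Fokker-Planck equation \eqref{backFP} with continuous compactly supported right-hand side, the plan is to write $\langle u,\psi\rangle$ as $\langle g|\theta\cdot\nu|,\varphi\rangle_{\Gamma_-}=\varphi(0,\eta)$, where $\varphi=S_{adj}\psi$ solves $P^t\varphi=\psi$, $\varphi|_{\Gamma_+}=0$. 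Thus $\langle u,\psi\rangle = \varphi(0,\eta)$. The corresponding quantity for the pencil-beam $\mathfrak{u}$ will, after the change of variables defining $\mathfrak u$ and the stereographic/stretched coordinates, be expressible through the backward Fermi pencil-beam solution $W$ of \eqref{backward_FPB}-\eqref{backward_FPB_bc} with right-hand side $\Psi$ obtained from $\psi$ by the same coordinate change. Using the integration-by-parts formula \eqref{IBPformula} for the Fermi pencil beam, $\langle \mathfrak u,\psi\rangle$ reduces to $W$ evaluated at the boundary point corresponding to $(0,\eta)$. So the difference $\langle u-\mathfrak u,\psi\rangle$ becomes a difference of two ``value at the initial point'' quantities, which I will control by comparing the PDEs that $\varphi$ and (the pushforward of) $W$ satisfy.

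First I would set up the local (beam/stereographic) coordinate change carefully, writing $u$ in the stretched coordinate system $\mathcal{Q}=\RR^n_+\times\RR^{n-1}$ via $(x',x^n,\theta)=(2\epsilon X',X^n,\mathcal J(\epsilon V))$; under this change the Fokker-Planck operator $P^t$ applied to $\varphi$ becomes, up to $O(\epsilon^2)$ and $O(\epsilon)$ corrections, the backward Fermi pencil-beam operator $\mathcal{P}^*$ applied to the pulled-back function, with coefficients $\tilde\sigma=\frac14\sigma(X^n\eta)$, $\tilde\lambda=\lambda(X^n\eta)$ frozen along the beam. The key analytic input is: (a) the error in replacing $\Delta_\theta$ by $\Delta_V$ on the tangent plane under stereographic coordinates is $O(\epsilon^2)$ times lower-order angular derivatives (this follows from the explicit metric $\mathring g=c^2(v)dx^2$, $c(v)=2/\langle v\rangle^2$, expanded near the north pole, i.e. $v=O(1)$ after rescaling); (b) freezing $\sigma,\lambda$ to their values on the central ray costs $O(\epsilon)$ in $X'$ and $O(\epsilon^2)$ after accounting for the $2\epsilon$-scaling of $X'$ — this is where $\sigma\in C^3$, $\lambda\in C^2$ enter, to Taylor-expand and to keep control of the error terms' derivatives; (c) the transport term $\theta\cdot\nabla_x$ becomes $\partial_{X^n}+V\cdot\nabla_{X'}$ plus $O(\epsilon^2)$ corrections because $\theta^n=1+O(\epsilon^2|V|^2)$ and $\theta'=\epsilon V c(\epsilon V)+\dots$. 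Assembling these, the pulled-back $\varphi$ satisfies the backward Fermi pencil-beam equation with a right-hand side $\Psi + \epsilon R_1 + \epsilon^2 R_2$, where $R_i$ are explicit combinations of derivatives of $\varphi$ up to order controlled by the subelliptic/Hölder regularity of \S\ref{sssec:reg}.

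The comparison of initial-point values then goes via the duality/IBP formula once more, now applied to the \emph{forward} pencil beam $U$ paired against the backward error: $\langle u-\mathfrak u,\psi\rangle$ equals $\int U\cdot(\epsilon R_1+\epsilon^2 R_2)\,dX\,dV$ plus genuinely $O(\epsilon^\infty)$ tails coming from the excluded neighborhood of the south pole and from truncation at $X^n$ large (both controlled by the exponential decay of $U$ built into $\bH_1$). To bound $\int U\, R_i$ I invoke Lemma \ref{lemma:U_integrals}: $R_1$ and $R_2$ are sums of terms of the form (polynomial in $X'-X^n\Theta$ and $V$) times (derivatives $\partial_{X'}^p\partial_V^q$ of $U$), precisely of the type the lemma declares $L^1$-finite, and the polynomial weights — coming from Taylor remainders of $\sigma,\lambda$ and from the metric expansion — supply exactly the powers $3l\ge 3|p|+|q|$ needed. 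The factor $\psi$ contributes the $\kappa$: since $\|\psi\|_\infty\le1$ and $\Lip(\psi)\le\kappa$, the pulled-back $\Psi$ has Lipschitz constant $O(\epsilon\kappa)$ in $X'$ and $O(\kappa)$ in $V$ (because of the $2\epsilon$ versus $1$ scaling), and by Lemma \ref{lemma:FPb_Lip} this Lipschitz dependence transfers to $W$ and hence to the error terms $R_i$, which each carry one such factor; combined with the $\epsilon^2$ prefactor (the $\epsilon R_1$ term actually integrates to $O(\epsilon^2)$ as well, because $R_1$ is odd/mean-zero in the transversal Gaussian variable, or else because its weight forces an extra $\epsilon$), one gets $\mathcal{W}^1_\kappa(u,\mathfrak u)\le C\epsilon^2\kappa$.

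\textbf{Main obstacle.} The delicate point is handling the \emph{boundary layer} near $X^n=0$, where both $u$ and $U$ are genuinely singular (measure-valued data): the coordinate-change error terms $R_i$ involve high-order $V$- and $X'$-derivatives of $\varphi$ (equivalently of $W$), and near $X^n=0$ these blow up. The resolution is exactly the weighted structure of Lemma \ref{lemma:U_integrals} — one must verify that every error term, after integration by parts to move derivatives off $U$ when necessary, matches a finite-$L^1$ pattern, i.e. that the algebraic degree of the $(X'-X^n\Theta,V)$-weight is always at least the order of the accompanying derivatives in the $3:1$ weighting. Getting the bookkeeping of these weights right for the $\epsilon R_1$ term (to see it is genuinely $O(\epsilon^2)$ and not merely $O(\epsilon)$) is the crux; it relies on the precise form of the stereographic metric expansion $c(\epsilon V)=\frac12(1 - \epsilon^2|V|^2/... )$ having no linear-in-$\epsilon$ term, so the first correction to $\Delta_\theta$ is $O(\epsilon^2)$, and on the $2\epsilon$ scaling of $X'$ ensuring the first Taylor correction of $\sigma(x)$ around the ray also enters at order $\epsilon^2$ relative to $\psi$'s unit-scale variation. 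Once that is pinned down, the remaining estimates are the routine Gaussian-moment computations already packaged in Lemma \ref{lemma:U_integrals}.
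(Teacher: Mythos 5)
Your proposal is correct and follows essentially the same route as the paper: a duality argument via the backward Fokker--Planck solution $\varphi=S_{adj}\psi$, passage to stretched stereographic coordinates, identification of the Fermi pencil-beam operator up to metric errors ($O(\epsilon^2)$, forming $J_1$ in the paper) and coefficient-freezing errors ($O(\epsilon)$ a priori, forming $J_2$), control of all the resulting $U$-weighted integrals by Lemma \ref{lemma:U_integrals}, and the crucial extra $\epsilon$ on the $O(\epsilon)$ piece obtained by the mean-zero cancellation in $X'$ together with the substitution $\Phi\rightsquigarrow W$ from Lemma \ref{lemma:ord1_approx} and the Lipschitz transfer $\Lip_{X',V}\Psi\lesssim\epsilon\kappa$ via Lemma \ref{lemma:FPb_Lip}. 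The only real difference is organizational: you frame the comparison as ``$\Phi$ and $W$ solve the backward Fermi pencil-beam equation with right-hand sides differing by $\epsilon R_1+\epsilon^2 R_2$, pair the difference with $U$,'' whereas the paper keeps $\varphi$ exact and integrates by parts to load derivatives onto the explicit Gaussian $U$, which avoids having to juggle boundary conditions for $\Phi-W$ (recall $\Gamma_+$ pulls back to $\{X^n=0,|V|>\epsilon^{-1}\}$, not to $X^n\to\infty$); the two are equivalent after integration by parts and lead to the same estimates.
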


\begin{proof}
Consider an open and bounded set $\Omega\subset\bar{\RR}^n_+$ and let $\psi\in BL_{1.\kappa}(\Omega\times\SS^{n-1})$ which we extend to the entire half-space as a continuous and compactly supported function preserving its Lipschitz constant. Let $\varphi$ the unique (strong) solution to the backward Fokker-Planck system,
\begin{equation}\label{backward_FP}
-\epsilon^2\sigma\Delta_\theta\varphi- \theta\cdot\nabla_x\varphi + \lambda\varphi = \psi,\quad \varphi|_{\Gamma_+} = 0,
\end{equation}
which is continuous in $Q\cup\Gamma_-$ due to Theorem  \ref{thm:Holder_reg} and the remark after it.
The goal is to show that 
\begin{equation*}
\int_{\Omega\times\SS^{n-1}}\psi(x,\theta) (u-\mathfrak{u})=O(\epsilon^2\kappa)   +o(1),
\end{equation*}
where the error term goes to zero as $\Omega\to\bar{\RR}_+$ since it is bounded by $\int_{\Omega^c\times\SS^{n-1}}( u +\mathfrak{u})dxd\theta$. It appears due to the fact that we are extending $\psi$ in space.
From the definition of weak solutions, the left hand side of the above expression reads 
\begin{equation*}
\begin{aligned}
\int_{Q}\psi(x,\theta) d(u-\mathfrak{u})&=  \langle g,\varphi|\theta\cdot\nu(x)|\rangle_{\Gamma_-} - \int_{Q}\mathfrak{u}(x,\theta)\psi(x,\theta)dxd\theta \\
&=\varphi(0,\eta)\eta^n - \int_{Q}\mathfrak{u}(x,\theta)\psi(x,\theta) dxd\theta,
\end{aligned}
\end{equation*}
and consequently our objective is to show that
\begin{equation}\label{plan_proof1}
\int_{Q}\mathfrak{u}
\psi dxd\theta= \varphi(0,\eta) \eta^n+ O(\epsilon^{2}\kappa).
\end{equation}
In order to achieve this we rescale the equation and pass to the stretched coordinates (at the level of the diffusion), which we recall is defined by the equality
\begin{equation*}
(x',x^n,\theta) = (2\epsilon X',X^n,\mathcal{J}(\epsilon V)),
\end{equation*}
with $\mathcal{J}=\mathcal{S}^{-1}$ the inverse of the stereographic projection from the south pole (defined in \eqref{def:sproj}-\eqref{def:isproj}).

We then set $\Phi$ and $\Psi$ as the following rescalings of $\varphi$ and $\psi$, respectively, which preserve the $L^\infty$-norm:
\begin{equation}\label{def:Phi_Psi}
\varphi(x,\theta) = \Phi((2\epsilon)^{-1}x',x^n,\epsilon^{-1}\mathcal{S}(\theta))\quad\text{and}\quad \psi(x,\theta) = \Psi((2\epsilon)^{-1}x',x^n,\epsilon^{-1}\mathcal{S}(\theta)).
\end{equation}
By noticing
\begin{equation*}
\begin{aligned}
\int_{Q}\mathfrak{u}(x,\theta)\psi(x,\theta)dxd\theta &=  \int_{Q}\mathfrak{u}(-\epsilon^2\sigma\Delta_\theta \varphi -\theta \cdot\nabla_x\varphi+ \lambda \varphi )dxd\theta\\
&=-\epsilon^2\int_{Q}\sigma (\Delta_\theta\mathfrak{u})\varphi dxd\theta - \int_{Q}\mathfrak{u}(\theta \cdot\nabla_x\varphi )dxd\theta + \int_{Q}\lambda\mathfrak{u}\varphi dxd\theta,
\end{aligned}
\end{equation*}
we pass to stretched coordinates and analyze individually every term on the right hand side. The first one gives
\begin{equation*}
\begin{aligned}
&-\epsilon^2\int_{Q}\sigma (\Delta_\theta\mathfrak{u})\varphi dxd\theta \\
&\hspace{2em}= -\int_{\mathcal{Q}} \tilde{\sigma}(X^n)(\Delta_V U)\Phi dXdV\\
&\hspace{3em}+  \int _{\mathcal{Q}}\frac{1}{4}\sigma(2\epsilon X',X^n)\Big(1 - 2^{3-n}c^{n-3}(\epsilon V)\Big)(\Delta_VU) \Phi dXdV\\
& \hspace{3em}-\epsilon(n-3)\int_{\mathcal{Q}} c^{-3}(\epsilon V)\sigma(2\epsilon X',X^n)\big((\nabla c)(\epsilon V)\cdot\nabla_V U\big)\Phi2^{1-n}c^{n-1}(\epsilon V)dXdV\\
&\hspace{3em}+\frac{1}{4}\int_{\mathcal{Q}} \big(\sigma( X^n\eta) - \sigma(2\epsilon X',X^n)\big)(\Delta_V U)\Phi dXdV.
\end{aligned}
\end{equation*}
For the second term we have
\begin{equation*}
\int_{Q}\mathfrak{u}(\theta \cdot\nabla_x\varphi )dxd\theta = 
 \int_{\mathcal{Q}} \Big[\Big(\frac{V\cdot \nabla_{X'} \Phi}{1+\epsilon^2|V|^2}\Big)U + \Big(\frac{1-\epsilon^2|V|^2}{1+\epsilon^2|V|^2}\Big)(\partial_{X^n}\Phi)U \Big]\frac{c^{n-1}(\epsilon V)}{2^{n-1}}dXdV.
\end{equation*}
Thus, after integration by parts (where we use \eqref{IBPformula} for the $X^n$-variable) we obtain
\begin{equation*}
\begin{aligned}
-\int_{Q}\mathfrak{u}(\theta \cdot\nabla_x\varphi )dxd\theta&\\
&\hspace{-5em} =  \int_{\mathcal{Q}} \Big[\Big(\frac{V\cdot \nabla_{X'} U}{1+\epsilon^2|V|^2}\Big)\Phi + \Big(\frac{1-\epsilon^2|V|^2}{1+\epsilon^2|V|^2}\Big)(\partial_{X^n}U)\Phi \Big]\frac{c^{n-1}(\epsilon V)}{2^{n-1}}dXdV\\
&\hspace{-4em}+\Phi(0,0,\epsilon^{-1}\mathcal{S}(\eta))\underbrace{\Big(\frac{1-|\mathcal{S}(\eta)|^2}{1+|\mathcal{S}(\eta)|^2}\Big)\langle\mathcal{S}(\eta)\rangle^{-2(n-1)}}_{=\eta^n(1 + O(\epsilon^{2}))\quad (\text{due to \eqref{def:delta_source}})}\\
&\hspace{-5em}= \int_{\mathcal{Q}}(V\cdot\nabla_{X'}U)\Phi dXdV +\Phi(0,0,\epsilon^{-1}\mathcal{S}(\eta)) \eta^n + O(\epsilon^2)\\
&\hspace{-4em}+ \int_{\mathcal{Q}} \Big(\frac{2^{1-n}c^{n-1}(\epsilon V)}{1+\epsilon^2|V|^2}-1\Big)(V\cdot \nabla_{X'}U) \Phi dXdV\\
& \hspace{-4em}+ \int_{\mathcal{Q}} \Big(\frac{1-\epsilon^2|V|^2}{1+\epsilon^2|V|^2}2^{1-n}c^{n-1}(\epsilon V)-1\Big)(\partial_{X^n}U)\Phi dXdV.
\end{aligned}
\end{equation*}
Finally, the third term gives
\begin{equation*}
\begin{aligned}
\int_{Q}\lambda\mathfrak{u}\varphi dxd\theta =& \int_{\mathcal{Q}} \tilde{\lambda} U\Phi dXdV + \int_{\mathcal{Q}}\lambda(2\epsilon X',X^n)(2^{1-n}c^{n-1}(\epsilon V) - 1) U\Phi dXdV\\
&+\int_{\mathcal{Q}} (\lambda( 2\epsilon X',X^n) - \lambda( X^n\eta)U\Phi dXdV.
\end{aligned}
\end{equation*}
Summarizing all the above and recalling that $U$ is a solution to the Fermi-equation (i.e.  $\mathcal{P}(U)=0$) we have
\begin{equation*}
\begin{aligned}
\int_{Q}\mathfrak{u}(x,\theta)\psi(x,\theta)dxd\theta
&= \varphi(0,\eta)\eta^n + J_1(\Phi) +J_2(\Phi) + O(\epsilon^2),
\end{aligned}
\end{equation*}
with
\begin{equation}\label{J1}
\begin{aligned}
J_1(\Phi) &=   \int _{\mathcal{Q}}\frac{1}{4}\sigma(2\epsilon X',X^n)\Big(1 - 2^{3-n}c^{n-3}(\epsilon V)\Big)(\Delta_VU) \Phi dXdV\\
&\quad  -\epsilon(n-3)\int_{\mathcal{Q}} c^{-3}(\epsilon V)\sigma(2\epsilon X',X^n)\big((\nabla c)(\epsilon V)\cdot\nabla_V U\big)\Phi2^{1-n}c^{n-1}(\epsilon V)dXdV \\
&\quad + \int_{\mathcal{Q}} \Big(\frac{2^{1-n}c^{n-1}(\epsilon V)}{1+\epsilon^2|V|^2}-1\Big)(V\cdot \nabla_{X'}U) \Phi dXdV\\
&\quad + \int_{\mathcal{Q}} \Big(\frac{1-\epsilon^2|V|^2}{1+\epsilon^2|V|^2}2^{1-n}c^{n-1}(\epsilon V)-1\Big)(\partial_{X^n}U)\Phi dXdV\\
&\quad + \int_{\mathcal{Q}}\lambda(2\epsilon X',X^n)(2^{1-n}c^{n-1}(\epsilon V) - 1) U\Phi dXdV\\
&= \sum_{i=1}^5J_{1,i},
\end{aligned}
\end{equation}
and 
\begin{equation}\label{J2}
\begin{aligned}
J_2(\Phi) &=  \frac{1}{4}\int_{\mathcal{Q}} (\sigma(X^n\eta) - \sigma( 2\epsilon X',X^n))(\Delta_V U)\Phi dXdV\\
&\quad -\int_{\mathcal{Q}} (\lambda(X^n\eta) - \lambda( 2\epsilon X',X^n))U\Phi dXdV.
\end{aligned}
\end{equation}
To estimate $|J_1|$ we notice that for $m\geq n-3$ ($n\geq 2$), 
\begin{equation}\label{aux1}
1-2^{-m}c^m(\epsilon V) = \left\{\begin{array}{ll}
\epsilon^2\sum^m_{k=1}\frac{|V|^{2}}{(1+\epsilon^2|V|^2)^k},\quad & m>0\footnotemark\\
0,& m=0\\
-\epsilon^2|V|^2,& m=-1,
\end{array}\right.
\end{equation}
\footnotetext{The equality follows from the difference of powers formula: $x^{p+1} - y^{p+1}=(x-y)(x^p+x^{p-1}y + \dots +xy^{p-1}+y^p)$.}
and thus for all $m\geq n-3$,
\begin{equation*}
|1-2^{-m}c^m(\epsilon V) |\leq |m|\epsilon^2|V|^2.
\end{equation*}
Furthermore, $(\nabla c)(\epsilon V) = -\frac{4\epsilon V}{(1+\epsilon^2|V|^2)^2}$ and
\begin{equation*}
\frac{c^{n-4}(\epsilon V)}{2^{n-4}(1+\epsilon^2|V|^2)^2} = \frac{1}{(1+\epsilon^2|V|^2)^{n-2}}\leq 1.
\end{equation*}
We then have that
\begin{equation*}
\begin{aligned}
&|J_{1,1}|+|J_{1,2}|+|J_{1,5}| \\
&\hspace{3em}\leq C\epsilon^2\|\Phi\|_\infty\big(|n-3|\||V|^2\Delta_VU\|_{L^1(\RR^{n}_+\times\RR^{n-1})} \\
&\hspace{9em}+ \|V\cdot\nabla_VU\|_{L^1(\RR^{n}_+\times\RR^{n-1})}+\|U\|_{L^1(\RR^{n}_+\times\RR^{n-1})}\big).
\end{aligned}
\end{equation*}
The term $J_{1,3}$ is bounded since
\begin{equation*}
\Big|\frac{2^{1-n}c^{n-1}(\epsilon V)}{1+\epsilon^2|V|^2}-1\Big| = \epsilon^{2}|V|^2\sum_{k=1}^{n}\frac{1}{1+\epsilon^2|V|^2}\leq n\epsilon^2|V|^2,
\end{equation*}
while the estimate for $J_{1,4}$ follows from
\begin{equation*}
\Big|\frac{1-\epsilon^2|V|^2}{1+\epsilon^2|V|^2}2^{1-n}c^{n-1}(\epsilon V)-1\Big|\leq \Big(1-\frac{1}{(1+\epsilon^2|V|^2)^n}\Big) + \frac{\epsilon^2|V|^2}{(1+\epsilon^2|V|^2)^n}\leq (n+1)\epsilon^2|V|^2.
\end{equation*}
We obtain
\begin{equation*}
\begin{aligned}
|J_{1,3}|+|J_{1,4}| &\leq C\epsilon^2\|\Phi\|_\infty\big(n\||V|^2V\cdot\nabla_{X'}U\|_{L^1(\RR^{n}_+\times\RR^{n-1})} \\
&\hspace{6em}+ (n+1)\||V|^2\partial_{X^n}U\|_{L^1(\RR^{n}_+\times\RR^{n-1})}\big),
\end{aligned}
\end{equation*}
where we use the Fermi Pencil-beam equation to get
\begin{equation*}
\||V|^2\partial_{X^n}U\|_{L^1} \leq \||V|^2\Delta_VU\|_{L^1} + \| |V|^2V\cdot \nabla_{X'}U\|_{L^1} +\||V|^2U\|_{L^1}.
\end{equation*}
In summary, and recalling that $\|\Phi\|_\infty = \|\varphi\|_\infty \leq \lambda_0^{-1}\|\psi\|_\infty$ (from lemma \eqref{lemma:Linfty_est}), we have obtained that 
\begin{equation*}
\begin{aligned}
|J_1|&\leq C\epsilon^2\big(\||V|^2\Delta_VU\|_{L^1} + \| V\cdot \nabla_{V}U\|_{L^1} \\
&\hspace{4em}+ \| |V|^2V\cdot \nabla_{X'}U\|_{L^1} +\||V|^2U\|_{L^1}+\|U\|_{L^1}\big)\|\psi\|_\infty.
\end{aligned}
\end{equation*}
All the factors involving integrals of $U$ are finite due to lemma \ref{lemma:U_integrals}.

We now estimate $J_2$. When $\sigma$ and $\lambda$ are constant, then $J_2=0$. Otherwise, we notice first due to our hypothesis \eqref{def:delta_source} 
we can write $\eta = N - 2\epsilon^2\Theta$ for some $\Theta\in\RR^n$, $|\Theta|=O(1)$,  thus by Taylor expansion and the above we can write
\begin{equation*}
\begin{aligned}
J_2(\Phi) &= -\frac{1}{2}\epsilon\int_{\mathcal{Q}^\epsilon}\big(X'+\epsilon X^n\Theta',\epsilon X^n\Theta^n\big)\cdot\nabla \sigma(X^n\eta)(\Delta_VU)\Phi dXdV\\
&\hspace{3em}+2\epsilon \int_{\mathcal{Q}^\epsilon}\big(X'+\epsilon X^n\Theta',\epsilon X^n\Theta^n\big)\cdot\nabla\lambda(X^n\eta)U \Phi dXdV+ \|\Phi\|_\infty O(\epsilon^2)\\
\end{aligned}
\end{equation*}
where $\|\Phi\|_\infty\leq \lambda_0^{-1}\|\psi\|_\infty$ and with the remainder depending on integrals of the form
\begin{equation*}
\begin{aligned}
&\|\sigma\|_{C^2}\|\big(|X'+\epsilon X^n\Theta'|^2+\epsilon^2|X^n\Theta^n|^2\big)\Delta_VU\|_{L^1}\\
&\text{and}\quad \|\lambda\|_{C^2}\|\big(|X'+\epsilon X^n\Theta'|^2+\epsilon^2|X^n\Theta^n|^2\big)U\|_{L^1}
\end{aligned}
\end{equation*}
which are uniformly bounded for $\Theta$ in a bounded region (guaranteed by \eqref{def:delta_source}).

To gain the extra factor $\epsilon$ on the leading term, we notice that due to the symmetry of $U$ with respect to the spatial direction $\mathcal{S}(\eta) = \textstyle\frac{2}{1+\eta^n}\epsilon^2\Theta'$, for any vector $\xi\in \RR^{n-1}$, 
\begin{equation*}
\int_{\RR^{n-1}\times\RR^{n-1}}((X'+\epsilon X^n\Theta')\cdot\xi)UdX'dV=O(
\epsilon^2)
\end{equation*}
(this can be obtain for instance by passing to Fourier domain) while 
\begin{equation*}
\int_{\RR^{n-1}\times\RR^{n-1}}((X'+\epsilon X^n\Theta')\cdot\xi)\Delta_VUdX'dV=0.
\end{equation*}
Therefore we write
\begin{equation*}
\begin{aligned}
J_2&= -\frac{1}{4}\epsilon\int_{\RR^n_+\times\RR^{n-1}}(X'+\epsilon X^n\Theta')\cdot\nabla_{X'}\sigma(X^n\eta)\\
&\hspace{7em}\times(\Delta_VU)(\Phi(X',X^n,V)-\Phi(-\epsilon X^n\Theta',X^n,0)))dXdV \\
&\quad+2\epsilon\int_{\RR^n_+\times\RR^{n-1}}(X'+\epsilon X^n\Theta')\cdot\nabla_{X'}\lambda(X^n\eta)\\
&\hspace{7em}\times U(\Phi(X',X^n,V)-\Phi(-\epsilon X^n\Theta',X^n,0))dXdV + O(\epsilon^2).
\end{aligned}
\end{equation*}
Ideally, we would like to say that $\Phi$ is a Lipschitz continuous function with constant bounded by the $W^{1,\infty}$-norm of $\Psi$, and this would give us the extra $\epsilon$ we need for the second order approximation. However, we only have shown H\"older continuity of $\Phi$. A way of bypassing this lack of regularity is with the aid of the following lemma that allows us to substitute $\Phi$ with an $O(\epsilon)$-approximation $W$ that is indeed Lipschitz continuous. Its demonstration can be found at the end of this proof.
\begin{lemma}\label{lemma:ord1_approx} 
Let $\Phi$, $\Psi$ and $U$ be as above, and consider 
$$f=(X'+\epsilon X^n\Theta')\cdot\xi(X^n) U\quad\text{or}\quad f=(X'+\epsilon X^n\Theta')\cdot\xi(X^n)\Delta_VU,$$ 
for some bounded function $\xi$. Then, for $W$ solution to the backward Fermi pencil-beam system \eqref{backward_FPB}-\eqref{backward_FPB_bc}, there exists $C>0$, depending on $\sigma$, $\lambda$ and $\xi$, such that
\begin{equation*}
\Big|\int_{\mathcal{Q}} f\Phi dXdV - \int_{\mathcal{Q}} f WdXdV \Big|\leq C\epsilon\|\Psi\|_\infty.
\end{equation*}
\end{lemma}
Using this lemma, we replace $\Phi$ with $W$ in the last expression of $J_2$ to get
\begin{equation*}
\begin{aligned}
J_2&= -\frac{1}{4}\epsilon\int_{\RR^n_+\times\RR^{n-1}}(X'+\epsilon X^n\Theta')\cdot\nabla_{X'}\sigma(X^n\eta)\\
&\hspace{7em}\times(\Delta_VU)(W(X',X^n,V)-W(-\epsilon X^n\Theta',X^n,0)))dXdV \\
&\quad+2\epsilon\int_{\RR^n_+\times\RR^{n-1}}(X' +\epsilon X^n\Theta')\cdot\nabla_{X'}\lambda( X^n\eta)\\
&\hspace{7em}\times U(W(X',X^n,V)-W( -\epsilon X^n\Theta',X^n,0))dXdV + O(\epsilon^2),
\end{aligned}
\end{equation*}
which we rewrite more concisely as
\begin{equation*}
J_2 = \epsilon\int_{\mathcal{Q}} h(X,V)(W(X',X^n,V)-W( -\epsilon X^n\Theta',X^n,0))dXdV + O(\epsilon^2).
\end{equation*}
Lemma \ref{lemma:FPb_Lip} implies
\begin{equation*}
|J_2|\leq C\epsilon\||(X'+\epsilon X^n\Theta',V)|h(X',V)\|_{L^1(\mathcal{Q})}\sup_{X^n>0}\Lip_{X',V}(\Psi(X^n)) + O(\epsilon^2),
\end{equation*}
where the finiteness of the integral involving $h$ follows from lemma \ref{lemma:U_integrals}. Moreover, it follows directly from \eqref{def:Phi_Psi} that
\begin{equation*}
\Lip_{X',V}(\Psi(X^n)) \leq C\epsilon \Lip_{x',v}(\psi(X^n))\leq C\epsilon\kappa,
\end{equation*}
and thus we deduce the estimate
\begin{equation*}
|J_2|\leq C\epsilon^2(\|\psi\|_{\infty}+\kappa),
\end{equation*}
for a constant $C$ independent of $\Omega$, with the latter present in the estimate only through the support of $\psi$. The above estimate then depends only on the supremum norm and the Lipschitz constant of $\psi$, which are both uniformly bounded in $BL_{1,\kappa}$, thus we can take $\Omega$ arbitrary large. By taking supremum among all $\psi\in BL_{1,\kappa}(\Omega\times\SS^{n-1})$ we deduce that 
\begin{equation*}
\mathcal{W}^1_{\kappa,\Omega}\leq C\epsilon^2\kappa+o(1),
\end{equation*}
and the proof is completed by letting $\Omega \to \bar{\RR}^n_+$.
\end{proof}

\begin{proof}[Proof of lemma \ref{lemma:ord1_approx}]
Let $\bfU$ be a solution to the (forward) Fermi pencil-beam problem
\begin{equation*}
- \tilde{\sigma}\Delta_V \bfU  + V \cdot\nabla_{X'} \bfU+ \partial_{X^n}\bfU  + \tilde{\lambda}\bfU= f,\quad (X,V)\in \RR^{n}_+\times\RR^{n-1},\quad \bfU|_{X^n=0} = 0,
\end{equation*}
given explicitly by
\begin{equation*}
\bfU(X,V) = \int^{X^n}_{0}e^{-\int^{X^n}_t\tilde{\lambda}(s)ds}\tau_{X^nV}({\bf H}_2(t)*\tau_{-tV}f)dt.
\end{equation*}
Denoting 
\begin{equation*}
\bfu(x,\theta) = \frac{1}{(2\epsilon)^{2(n-1)}}\bfU((2\epsilon)^{-1}x',x^n,\epsilon^{-1}\mathcal{S}(\theta)),
\end{equation*}
we have
\begin{equation*}
\begin{aligned}
\int_{\RR^{n}_+\times\RR^{n-1}} fWdXdV &= \int_{\mathcal{Q}} \bfU\Psi dXdV \\
&= \underbrace{\int_{Q} \bfu(x,\theta)\psi(x,\theta)dxd\theta}_{I_1} +  \underbrace{\int_{\mathcal{Q}} \bfU\Psi (2^{1-n}c^{n-1}(\epsilon V)-1)dXdV}_{I_2}.
\end{aligned}
\end{equation*}
It is clear from \eqref{aux1} and lemma \ref{lemma:U_integrals} that 
\begin{equation*}
|I_2|\leq \epsilon^2\||V|^2\bfU\|_{L^1}\|\Psi\|_\infty\leq C\epsilon^2\|\Psi\|_\infty.
\end{equation*}
On the other hand, we use that $\varphi$ is a strong solution of \eqref{backward_FP} and we integrate by parts to get
\begin{equation*}
\begin{aligned}
I_1&= \int_{\mathcal{Q}} f\Phi dXdV + J_0(\Phi) + J_1(\Phi) + J_2(\Phi),
\end{aligned}
\end{equation*}
where
\begin{equation}\label{J0}
\begin{aligned}
J_0(\Phi) &=  \int_{\Gamma_-}\bfu\varphi(\theta\cdot\nu(x))dS(x)d\theta=0,
\end{aligned}
\end{equation}
since $U=0$ on $X^n=0$, and $J_1$ and $J_2$ defined as in \eqref{J1} and \eqref{J2}, respectively.
Estimating $J_1$ as in the proof of the previous theorem and using one more time lemma \ref{lemma:U_integrals}, we obtain
\begin{equation*}
\begin{aligned}
|J_1|&\leq C\epsilon^2\big(\||V|^2\Delta_V\bfU\|_{L^1} + \| V\cdot \nabla_{V}\bfU\|_{L^1} \\
&\hspace{3em}+ \| |V|^2V\cdot \nabla_{X'}\bfU\|_{L^1} +\|(1+|V|^2)\bfU\|_{L^1} + \||V|^2f\|_{L^1}\big)\|\Psi\|_\infty\leq C\epsilon^2\|\Psi\|_\infty,
\end{aligned}
\end{equation*}
while $|J_2|$ has a straightforward upper bound given by
$$|J_2|\leq C\epsilon\|\Phi\|_\infty(\||X'+\epsilon X^n\Theta'|\Delta_V\bfU\|_{L^1}+\||X'+\epsilon X^n\Theta'|\bfU\|_{L^1})\leq C\epsilon\|\Psi\|_\infty,$$
for a constant depending on $\|\sigma\|_{C^1}$ and $\|\lambda\|_{C^1}$.
\end{proof}

\subsubsection{$L^1$-boundary sources}\label{subsec:ssbs}
We now consider more general boundary sources such that for some $C>0$ independent of $\epsilon$,
\begin{equation}\label{bdry_cond_app2}
g(x,\theta) \in L^1(\Gamma_-), \quad\supp(g)\Subset \partial\RR^n_+\times\{\theta\in\SS^{n-1}:|\theta-N|<C\epsilon^2 \} \subset\Gamma_-.
\end{equation}

Let $\{U_{y,\eta}(X,V)\}_{y,\eta}$ be a family of pencil-beams with respective boundary conditions $G = \delta(X')\delta(V-\epsilon^{-1}\mathcal{S}(\eta))$ on $\partial\RR^n_+$ and null interior source $F=0$. The subscript $(y,\eta)\in \supp(g)$ indicates that each pencil beam is constructed using the coefficients
\begin{equation*}
\tilde{\sigma}(X^n) = \frac{1}{4}\sigma(y+X^n\eta)\quad\text{and}\quad \tilde{\lambda}(X^n) = \lambda(y+X^n \eta),
\end{equation*}
and they are all given by the explicit formula \eqref{FPb_sol_form}. 

We define $\mathfrak{u}(x,\theta;y,\eta)$ as the following transformation of $U_{y,\eta}$:
\begin{equation}\label{pb_y}
\mathfrak{u}(\cdot,\cdot;y,\eta) := (2\epsilon)^{-2(n-1)}U_{y,\eta}\circ T_{y}^\epsilon,\quad (y,\eta)\in \supp(g),
\end{equation}
where 
\begin{equation*}
T^\epsilon_{y}:\RR^n\times\SS^{n-1}\ni (x,\theta)\mapsto (X,V)\in \RR^n_+\times\RR^{n-1}
\end{equation*}
is the transformation defined as
\begin{equation*}
(X,V) = (( 2\epsilon)^{-1}(x'-y'),x^n,\epsilon^{-1}\mathcal{S}(\theta)).
\end{equation*}
In other words, $T^\epsilon_{y}$ defines stretched coordinates after performing a spatial translation $x\mapsto x-y$.
The rescaling is chosen such that it preserves (up to a $O(1)$ factor) the $L^1$ norm of the pencil-beam. Our superposition of pencil-beam model is the distribution in $\bar{Q}$ given by
\begin{equation}\label{sup_pb}
\mathfrak{u}(x,\theta) := \int_{\Gamma_-} g(y,\eta)\mathfrak{u}(x,\theta;y,\eta) dS(y)d\eta,
\end{equation}
so that
\begin{equation*}
\langle\mathfrak{u},\phi\rangle := \int_{\Gamma_-} \int_{Q}g(y,\eta)\mathfrak{u}(x,\theta;y,\eta)\phi(x,\theta)dxd\theta dS(y)d\eta,\quad\forall \phi\in C^\infty_0(\bar{Q}).
\end{equation*}
We can extend the definition of $\mathfrak{u}$ to all continuous functions in $\bar{Q}$ since for all $(z,\zeta)\in\supp(g)$, 
\begin{equation*}
\|\mathfrak{u}(\cdot,\cdot;z,\zeta)\|_{L^1}\leq \sup_{(y,\eta)\in\supp(g)}\|U_{y,\eta}\|_{L^1(\RR^n_+\times\RR^{n-1})}<+\infty.
\end{equation*}
In this setting, we have the following approximation result:

\begin{cor}\label{cor:main2}
Let $u$ be a distributional solution to \eqref{FP} with incoming boundary condition $g$ as in \eqref{bdry_cond_app2}. There exists a constant $C=C(\|\sigma\|_{C^3},\|\lambda\|_{C^2})>0$, independent of $\epsilon$, so that 
\begin{equation*}
\mathcal{W}^1_{\kappa}(u,\mathfrak{u})\leq C\|g\|_{L^1}\epsilon^2\kappa.
\end{equation*}
\end{cor}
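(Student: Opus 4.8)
The plan is to reduce the statement to Theorem~\ref{thm:main1} by a superposition argument. The key observation is that, by linearity and the uniqueness in Theorem~\ref{thm:FP_dist}, the Fokker--Planck solution with boundary datum $g$ decomposes as
\begin{equation*}
u = \int_{\Gamma_-} g(y,\eta)\, u_{y,\eta}\, dS(y)\,d\eta ,
\end{equation*}
where $u_{y,\eta}$ denotes the distributional solution of \eqref{FP}--\eqref{FP_bc} with $f=0$ and delta boundary datum concentrated at $(y,\eta)$. To see that the right-hand side is a well-defined finite measure, one integrates \eqref{FP} over $Q$, uses $\int_{\SS^{n-1}}\Delta_\theta u_{y,\eta}\,d\theta=0$ together with Stokes' theorem and the non-negativity furnished by (the analog of) Lemma~\ref{lemma:min_pple}, to get the uniform bound $\int_Q u_{y,\eta}\le \lambda_0^{-1}|\eta\cdot\nu(y)|\le\lambda_0^{-1}$; then testing the candidate measure against $P^t\varphi$ for $\varphi=S_{adj}\psi$ and applying Fubini's theorem shows that it solves \eqref{FP}--\eqref{FP_bc} in the sense of Definition~\ref{def:dist_sol}, hence coincides with $u$. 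By the very definition \eqref{sup_pb}, the approximant is the matching superposition $\mathfrak{u}=\int_{\Gamma_-}g(y,\eta)\,\mathfrak{u}(\cdot,\cdot;y,\eta)\,dS(y)\,d\eta$.

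For each fixed $(y,\eta)\in\supp(g)$ the pair $(u_{y,\eta},\mathfrak{u}(\cdot,\cdot;y,\eta))$ is precisely the pair estimated in Theorem~\ref{thm:main1}, modulo the spatial translation $x\mapsto x-y$ (which is tangential to $\partial\RR^n_+$, hence preserves $\RR^n_+$ and, since $BL_{1,\kappa}$ is translation-invariant, leaves $\mathcal{W}^1_{\kappa}$ unchanged). Indeed, by \eqref{bdry_cond_app2} the launch direction satisfies $|\eta-N|<C\epsilon^2$, and $\mathfrak{u}(\cdot,\cdot;y,\eta)$ is built from the Fermi pencil-beam with coefficients $\tilde\sigma(X^n)=\tfrac14\sigma(y+X^n\eta)$, $\tilde\lambda(X^n)=\lambda(y+X^n\eta)$ along the translated beam path; after the change of variables $x\mapsto x-y$ the problem becomes that of Theorem~\ref{thm:main1} with coefficients $\sigma(\cdot+y)$, $\lambda(\cdot+y)$, whose $C^3$, $C^2$ norms coincide with those of $\sigma$, $\lambda$. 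The proof of Theorem~\ref{thm:main1} then goes through with the Taylor expansions of $\sigma,\lambda$ carried out around $y+X^n\eta$, and all the constants entering the bounds on $J_1$ and $J_2$ (and in the intermediate use of Lemmas~\ref{lemma:U_integrals}, \ref{lemma:FPb_Lip} and \ref{lemma:ord1_approx}) depend only on $\|\sigma\|_{C^3(\bar\RR^n_+)}$, $\|\lambda\|_{C^2(\bar\RR^n_+)}$, $\sigma_0$ and $\lambda_0$; in particular $\tilde\sigma\in C^3(\bar\RR_+)$ with norm controlled by $\|\sigma\|_{C^3}$, so Lemma~\ref{lemma:U_integrals} bounds the relevant integrals of $U_{y,\eta}$ uniformly over the compact set $\supp(g)$. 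Hence there is $C=C(\|\sigma\|_{C^3},\|\lambda\|_{C^2})>0$, independent of $\epsilon$ and of $(y,\eta)\in\supp(g)$, such that $\mathcal{W}^1_{\kappa}(u_{y,\eta},\mathfrak{u}(\cdot,\cdot;y,\eta))\le C\epsilon^2\kappa$.

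Finally I would assemble the pieces. Given $\psi\in BL_{1,\kappa}(\Omega\times\SS^{n-1})$, extend it to $\bar\RR^n_+\times\SS^{n-1}$ keeping $\|\psi\|_\infty\le1$ and $\Lip(\psi)\le\kappa$; using the two superposition identities, Fubini's theorem, and the previous bound,
\begin{align*}
\int\psi\,(u-\mathfrak{u})
&= \int_{\Gamma_-} g(y,\eta)\,\Big(\int\psi\,\big(u_{y,\eta}-\mathfrak{u}(\cdot,\cdot;y,\eta)\big)\Big)\,dS(y)\,d\eta\\
&\le \int_{\Gamma_-}|g(y,\eta)|\,\mathcal{W}^1_{\kappa}\big(u_{y,\eta},\mathfrak{u}(\cdot,\cdot;y,\eta)\big)\,dS(y)\,d\eta
\ \le\ C\epsilon^2\kappa\,\|g\|_{L^1}.
\end{align*}
Taking the supremum over $\psi\in BL_{1,\kappa}$ and then letting $\Omega\to\bar\RR^n_+$ yields the claim. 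I expect the only genuinely non-routine point to be the uniformity asserted in the second paragraph, namely that every estimate in the proof of Theorem~\ref{thm:main1} survives the translation with a constant independent of the launch point $y$ and direction $\eta$ in $\supp(g)$; everything else is linearity of the pairing $\int\psi\,(\cdot)$ together with the uniform mass bounds $\|u_{y,\eta}\|_{\mathrm{TV}},\ \|\mathfrak{u}(\cdot,\cdot;y,\eta)\|_{L^1}\le C$, which legitimize the interchange of integrals.
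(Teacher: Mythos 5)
Your proposal is correct and follows essentially the same route as the paper: reduce to the single delta-beam case of Theorem~\ref{thm:main1} by superposition, observe that after the tangential translation $x\mapsto x-y$ the constants in that proof depend only on $\|\sigma\|_{C^3}$, $\|\lambda\|_{C^2}$ and are therefore uniform over $\supp(g)$, and integrate against $g$. The paper keeps the decomposition implicit via the duality pairing $\langle u,\psi\rangle=\langle g|\theta\cdot\nu|,S_{adj}\psi\rangle_{\Gamma_-}$ (equivalently, $\langle u_{y,\eta},\psi\rangle=\varphi(y,\eta)\eta^n$), whereas you write $u=\int_{\Gamma_-}g\,u_{y,\eta}$ explicitly as a measure; this is just a repackaging of the same computation, and your identification of the uniformity in $(y,\eta)$ as the only non-routine point matches the paper's concern.
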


\begin{proof} This proof is quite similar to that in the single pencil-beam case. We take an open and bounded $\Omega\subset\bar{\RR}^n_+$. For any $\psi\in BL_{1,\kappa}(\Omega\times\SS^{n-1})$, which we extend as a continuous compactly supported function preserving its Lipschitz constant, let $\varphi$ be the unique solution to the backward Fokker-Planck equation,
\begin{equation*}
 -\epsilon^2\sigma\Delta_\theta\varphi- \theta\cdot\nabla_x\varphi + \lambda\varphi = \psi,\quad \varphi|_{\Gamma_+} = 0.
 \end{equation*}
We have
\begin{equation*}
\begin{aligned}
\int_Q\psi(x,\theta) (u-\mathfrak{u})dxd\theta &=  \langle g|\theta\cdot\nu(x)|,\varphi\rangle_{\Gamma_-} \\
&\hspace{2em}-\int_{\Gamma_-} \int_{Q}g(y,\eta)\mathfrak{u}(x,\theta;y,\eta)\psi(x,\theta)dxd\theta dS(y)d\eta\\
&= \int_{\Gamma_-}g(y,\eta)\varphi(y,\eta)|\eta\cdot\nu(y)|dS(y)d\eta \\
&\hspace{2em}-\int_{\Gamma_-}g(y,\eta)\int_{Q}\mathfrak{u}(x,\theta;y,\eta)\psi(x,\theta)dxd\theta dS(y)d\eta. 
\end{aligned}
\end{equation*}
The proof reduces to showing that
\begin{equation}\label{plan_proof2a}
\Big|\int_{Q}\mathfrak{u}(x,\theta;y,\eta)\psi(x,\theta)dxd\theta -\varphi(y,\eta)\eta^n\Big|\leq C\epsilon^2\kappa,
\end{equation}
uniformly for $(y,\eta)\in\supp(g)$. 
Notice that applying the translation $z=x-y$ we get
\begin{equation*}
\begin{aligned}
\int_{Q}\mathfrak{u}(x,\theta;y,\eta)\psi(x,\theta)dxd\theta 
&= (\sqrt{2}\epsilon)^{-2(n-1)}\int_{Q}U_{y,\eta}(\epsilon^{-1}z',z^n,\epsilon^{-1}\mathcal{S}(\theta))\tilde{\psi}(z,\theta)dzd\theta,
\end{aligned}
\end{equation*}
with $\tilde{\psi}(z,\theta) = \psi(z+y,\theta)$. Thus, \eqref{plan_proof2a} is precisely what we obtained in the proof of Theorem \ref{thm:main1} (see \eqref{plan_proof1}). Going over those computations, one realizes that the constant in the estimate are uniform in $(y,\eta)$ and only depend on $\|\sigma\|_{C^3}$, $\|\lambda\|_{C^2}$ and the dimension $n$. 
We then have
\begin{equation*}
\int_{\Omega\times\SS^{n-1}}\psi(x,\theta) (u-\mathfrak{u})dxd\theta \leq C\epsilon^2\kappa\|g\|_{L^1} +o(1),
\end{equation*}
for all $\psi\in BL_{1,\kappa}(\Omega\times\SS^{n-1})$, for a constant $C>0$ independent of the set $\Omega$, thus we conclude by taking supremum among all those functions and then letting $\Omega\to\bar{\RR}^n_+$.
\end{proof}

\subsection{Comparison with ballistic linear transport}

We now compare the Fokker-Planck solution to the ballistic transport equation in the narrow-beam regime. Narrow beams are by essence well approximated by a distribution supported on a half line, at least when one considers approximations in the 1-Wasserstein distance. We consider such an approximation and show that the Fermi pencil-beam solution is significantly more accurate than the ballistic transport solution. At the appropriate beam scaling, i.e., at distances from the beam center scaled in $\epsilon$, we show (using $\kappa\approx\epsilon^{-1}$) that the Fermi pencil-beam is still accurate while the ballistic transport solution (obviously) fails to account for dispersion. 

Since diffusion is weak in a narrow beam regime, simply neglecting its effects leads to the ballistic transport model:
\begin{equation}\label{lin_transp}
\theta\cdot\nabla_x v + \lambda v = 0,\quad (x,\theta)\in\RR^n_+\times\SS^{n-1},\quad\text{with}\quad v|_{\Gamma_-} = g.
\end{equation}

Rather than comparing $v$ with $u$, we compare $v$ with $\mathfrak{u}$ instead since we already know how close $\mathfrak{u}$ is to $u$. We obtain the following result.
\begin{lemma}\label{lemma:upp_bound}
For $\sigma\in C^3(\bar{\RR}^n_+)$ and $\lambda\in C(\bar{\RR}^n_+)$, let $g$ and $\mathfrak{u}$ be as in \S\ref{subsec:ssbs}, and let $v$ be the solution to \eqref{lin_transp}. There exists a constant $C(\|\sigma\|_{C^3},\lambda_0)>0$ independent of $\epsilon$ such that
\begin{equation*}
\mathcal{W}^1_{\kappa}(v,\mathfrak{u})\leq C\kappa\epsilon\|g\|_{L^1}.
\end{equation*}
\end{lemma}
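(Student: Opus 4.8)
The plan is to compare $v$ and $\mathfrak{u}$ by superposing single-beam comparisons, exactly as in Corollary \ref{cor:main2}, but now testing against the \emph{transport} (ballistic) solution rather than the Fokker-Planck one. Concretely, take an open bounded $\Omega\subset\bar{\RR}^n_+$ and a test function $\psi\in BL_{1,\kappa}(\Omega\times\SS^{n-1})$, extended to a compactly supported Lipschitz function on the whole half-space. The ballistic solution with boundary data $g$ is explicit: $v(x,\theta)=\int_{\Gamma_-}g(y,\eta)\,v(x,\theta;y,\eta)\,dS(y)d\eta$ where $v(\cdot,\cdot;y,\eta)$ is the ballistic beam generated by $\delta_y(x)\delta_\eta(\theta)$, namely $v(x,\theta;y,\eta)=e^{-\int_0^{x^n}\lambda(y+s\eta)ds/\eta^n}\,\delta(x'-y'-\tfrac{x^n}{\eta^n}\eta')\,\delta(\theta-\eta)$ (up to the Jacobian factor $|\eta\cdot\nu(y)|^{-1}=1/\eta^n$). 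Testing against $\psi$ gives $\int_Q \psi\, v(\cdot;y,\eta)=\int_0^\infty e^{-\int_0^{t}\lambda(y+s\eta)ds/\eta^n}\,\psi(y+\tfrac{t}{\eta^n}(\eta',t\cdot?),\eta)\,\frac{dt}{\eta^n}$; after a change of variable along the ray this is just the line integral of $\psi$ weighted by the absorption factor along the ray from $(y,\eta)$. So the proof reduces to the single-beam estimate
\[
\Bigl|\int_Q \psi\,(v(\cdot;y,\eta)-\mathfrak{u}(\cdot;y,\eta))\,dxd\theta\Bigr|\le C\kappa\epsilon,
\]
uniformly in $(y,\eta)\in\supp(g)$, and then one integrates against $g$ and takes the supremum over $\psi$ and the limit $\Omega\to\bar{\RR}^n_+$.

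For the single-beam estimate I would pass $\mathfrak{u}(\cdot;y,\eta)$ to stretched coordinates $T^\epsilon_y$ as in \eqref{pb_y}, so that $\int_Q\psi\,\mathfrak{u}(\cdot;y,\eta)=\int_{\mathcal{Q}}U_{y,\eta}(X,V)\,\Psi_y(X,V)\,dXdV$ with $\Psi_y$ the rescaling of $\psi$ (shifted by $y$), which satisfies $\|\Psi_y\|_\infty=\|\psi\|_\infty\le 1$ and $\Lip_{X',V}(\Psi_y)\le C\epsilon\kappa$. Using the explicit Gaussian formula \eqref{FPb_sol_form}-\eqref{def:H1} for $U_{y,\eta}$ with $F=0$, $G=\delta(X')\delta(V-\epsilon^{-1}\mathcal S(\eta))$, one has that $U_{y,\eta}$ is, at height $X^n$, a probability density in $(X',V)$ concentrated near $X'=X^n\,\epsilon^{-1}\mathcal S(\eta)$, $V=\epsilon^{-1}\mathcal S(\eta)$, with spreads of order $(E_2E_0-E_1^2)^{1/2}/E_{\bullet}$; by Lemma \ref{lemma:U_integrals} the first moments $\||X^i-X^n\Theta^i|U\|_{L^1}$ and $\||V^j-\Theta^j|U\|_{L^1}$ are finite and $O(1)$ (with $\Theta=\epsilon^{-1}\mathcal S(\eta)$). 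Meanwhile the ballistic beam in the same coordinates is $e^{-\Lambda(X^n)}\,\delta(X'-X^n\Theta)\,\delta(V-\Theta)$ for the appropriate $\Lambda$. Comparing the $V\cdot\nabla_{X'}U+\partial_{X^n}U$ part of the Fermi equation with the pure transport structure, and using that $\int U(X,V;X^n)dX'dV = e^{-\int_0^{X^n}\tilde\lambda}$ matches the ballistic total mass at each height $X^n$ up to terms controlled by $\|\lambda\|_{C^1}$, one gets
\[
\int_{\mathcal{Q}}U_{y,\eta}\Psi_y\,dXdV
= \int_0^\infty e^{-\int_0^{X^n}\tilde\lambda}\,\Psi_y(X^n\Theta,X^n,\Theta)\,dX^n + R,
\]
where $R$ is bounded by $\Lip_{X',V}(\Psi_y)$ times the first moments of $U_{y,\eta}$, hence $|R|\le C\epsilon\kappa$. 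The leading term is exactly (after undoing the rescaling) $\int_Q\psi\,v(\cdot;y,\eta)$, up to the discrepancy between $\tilde\lambda(X^n)=\lambda(y+X^n\eta)$ and the ballistic absorption rate $\lambda(y+s\eta)/\eta^n$ along the ray, which is $O(\epsilon^2)$ because $\eta^n=1+O(\epsilon^2)$ by \eqref{bdry_cond_app2}, and up to $|\mathcal S(\eta)-\tfrac12\eta'|=O(\epsilon^2)$ type errors in identifying the ray direction. Collecting these gives the bound $C\epsilon\kappa$ for the single-beam difference, and integrating in $g$ produces the factor $\|g\|_{L^1}$.

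The main obstacle is the $R$ term: controlling the transversal spreading of $U_{y,\eta}$ against a merely Lipschitz (in the $\epsilon$-scaled variables) test function. The point is that although $U_{y,\eta}$ is singular as $X^n\to 0$, its transversal first moments $\int |X'-X^n\Theta|\,U\,dX'dV$ and $\int|V-\Theta|\,U\,dX'dV$ are $O((X^n)^{3/2})$ and $O((X^n)^{1/2})$ respectively near $X^n=0$ — this is precisely the kind of bound quantified in Lemma \ref{lemma:U_integrals} — so they integrate to $O(1)$ against $e^{-\int_0^{X^n}\tilde\lambda}dX^n$, and multiplying by the Lipschitz constant $O(\epsilon\kappa)$ gives the claimed order. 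One must be a little careful that the comparison base point used at each height, $(X^n\Theta,X^n,\Theta)$, is the mean of $U_{y,\eta}(\cdot;X^n)$ so that the Lipschitz bound applies to a centered quantity; this uses the Gaussian symmetry of $\bH_1$ about that point. Everything else — passing to stretched coordinates, the $O(\epsilon^2)$ corrections from $|\eta-N|=O(\epsilon^2)$, and the superposition — is routine and parallels Theorem \ref{thm:main1} and Corollary \ref{cor:main2}.
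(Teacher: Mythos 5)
Your proposal is correct and follows essentially the same route as the paper: express $v$ as a superposition of ballistic beams, pass to stretched coordinates, use that $\int U_{y,\eta}(X',X^n,V)\,dX'dV = e^{-\int_0^{X^n}\tilde\lambda}$ so the pencil-beam and ballistic beam carry the same mass at each height $X^n$, and bound the height-by-height discrepancy by $\Lip_{X',V}(\Psi)\approx\epsilon\kappa$ times the first transversal moments of $U_{y,\eta}$ (finite by Lemma \ref{lemma:U_integrals}). The paper arranges the bookkeeping slightly more compactly by performing one global change of variables and one dilation before estimating, rather than reducing first to a single-beam inequality, but the decomposition, the key mass-matching identity, and the final Lipschitz-times-moment bound are identical.
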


\begin{proof}
The transport solution $v$ can be written as
\begin{equation*}
v(x,\theta) = e^{-\int^{\tau_-(x,\theta)}_0\lambda(x-s\theta)ds}g(x-\tau_-(x,\theta)\theta,\theta).
\end{equation*}
Thus, applying the change of variables 
\begin{equation*}
(x,\theta) = \mathcal{T}(y',t,\eta) := ((y'+t\eta',t\eta^n),\eta),\quad |\nabla\mathcal{T}(y',t,\eta)|=|\eta^n|
\end{equation*}
we get that for any $\psi\in BL_{1,\kappa}(\bar{\RR}^n_+\times\SS^{n-1})$, 
\begin{equation*}
\begin{aligned}
\int v\psi dxd\theta 
&=\int_{\SS^{n-1}}\int_{\RR^{n-1}}\int_0^{\infty} g((y',0),\eta)e^{-\int^{t}_0\lambda((y',0)+s\eta)ds}\psi((y',0)+t\eta,\eta)\eta^ndtdy'd\eta\\
&=\int_{\SS^{n-1}}\int_{\RR^{n-1}}\int_0^{\infty} g((y',0),\eta)e^{-\int^{t}_0\lambda((y',0)+s\eta)ds}\psi((y',0)+t\eta,\eta)dtdy'd\eta\\
&\hspace{1em} + O(\epsilon^2\|g\|_{L^1}\|\psi\|_\infty),
\end{aligned}
\end{equation*}
where the last equality is obtained from the fact that for all $x\in\partial\RR^n_+$, $g(x,\cdot)$ is supported in a $\epsilon^2$-neighborhood of $N$.
On the other hand, recalling the definition of $\mathfrak{u}$ in \eqref{pb_y}-\eqref{sup_pb} and denoting $\Psi(X,V) = \psi(2\epsilon X',X^n,\mathcal{J}(\epsilon V))$ with $\mathcal{J}$ the inverse of the stereographic projection with respect to the south pole, we have that
\begin{equation*}
\begin{aligned}
&\int_Q \mathfrak{u}\psi dxd\theta \\
&\hspace{0em}= \int_{\Gamma_-\times\RR^{n-1}\times\RR^{n-1}_+} g((y',0),\eta)U_{y,\eta}(X'-{\textstyle \frac{1}{2\epsilon}}y,X^n,V)\Psi(X,V)\frac{c^{n-1}(\epsilon V)}{2^{n-1}}dXdVdy'd\eta\\
&\hspace{0em}=\int_{\Gamma_-\times \RR^{n-1}\times\RR^{n-1}_+} g((y',0),\eta)U_{y,\eta}(X'-{\textstyle \frac{1}{2\epsilon}}y',X^n,V)\Psi(X,V)dXdVdy'd\eta + R(\epsilon^2),
\end{aligned}
\end{equation*}
with remainder $|R(\epsilon^2)|\leq C\epsilon^2\|\psi\|_\infty\|g\|_{L^1}$. Combining both integrals above and performing the dilation $Y'={\textstyle \frac{1}{2\epsilon}}y'$ we obtain
\begin{equation}\label{v-u}
\begin{aligned}
&\int_Q\psi(x,\theta)(v-\mathfrak{u})dxd\theta\\
&\hspace{2em}=  \int_{\RR^{2(n-1)}} \int^{\infty}_0\tilde{g}(Y',W)e^{-\int^{X^n}_0\lambda(( 2\epsilon Y',0)+s\mathcal{J}(\epsilon W))ds}\\
&\hspace{11em}\times\Psi(Y',X^n,W)dX^n dY'dW\\
&\hspace{3em}-\int_{\RR^{2(n-1)}}  \int^{\infty}_0\tilde{g}(Y',W)\Big(\int_{\RR^{2(n-1)}} U_{(2\epsilon Y',0),\mathcal{J}(\epsilon W)}(X',X^n,V)\\
&\hspace{15em}\times\Psi(X'+Y,X^n,V)dX'dV\Big)dX^ndY' dW \\
&\hspace{3em}+ R(\epsilon^2),
\end{aligned}
\end{equation}
where $\tilde{g}(Y',W) := (2\epsilon)^{2(n-1)}g(( 2\epsilon Y',0),\mathcal{J}(\epsilon W))$, thus its $L^1$-norm is of the order of $\|g\|_{L^1}$. We recall that $U_{y,\eta}$ is the solution to the Fermi pencil-beam equation for $\tilde{\sigma}(X^n) = \frac{1}{4}\sigma(y+X^n\eta)$ and $\tilde{\lambda}(X^n)=\lambda(y+X^n\eta)$, and moreover
\begin{equation*}
\int_{\RR^{2(n-1)}} U_{y,\eta}(X',X^n,V)dX'dV = e^{-\int^{X^n}_0\lambda(y+s\eta)ds}.
\end{equation*}
Consequently,
\begin{equation*}
\begin{aligned}
&\int_Q\psi(x,\theta)(v-\mathfrak{u})dxd\theta\\
&\hspace{0em}\leq \int_{\RR^{2(n-1)}}  \int^{\infty}_0\tilde{g}(Y',W)\Big|\int_{\RR^{2(n-1)}} U_{(2\epsilon Y',0),\mathcal{J}(\epsilon W)}(X',X^n,V)\Psi(Y',X^n,W) dX'dV\\
&\hspace{3em}-  \int_{\RR^{2(n-1)}} U_{(2\epsilon Y',0),\mathcal{J}(\epsilon W)}(X',X^n,V)\Psi(X'+Y',X^n,V)dX'dV\Big|dX^ndY'dW\\
&\hspace{0em}\leq \int_{\RR^{2(n-1)}}  \int^{\infty}_0\tilde{g}(Y',W)\int_{\RR^{2(n-1)}} U_{(2\epsilon Y',0),\mathcal{J}(\epsilon W)}(X',X^n,V)\\
&\hspace{10em}\times\big|\Psi(Y',X^n,W) - \Psi(X'+Y',X^n,V)\big|dX'dVdX^ndY'dW\\
&\hspace{0em}\leq C\epsilon \Lip(\psi)\Big(\sup_{(y.\eta)\in\supp(g)}\|U_{y,\eta}(X,V)|(X',V)|\|_{L^1(\mathcal{Q})}\Big)\|\tilde{g}\|_{L^1}.
\end{aligned}
\end{equation*}
We conclude by noticing that $\|\tilde{g}\|_{L^1} = \|g\|_{L^1}$ and
\begin{equation*}
\sup_{(y,\eta)\in\supp(g)}\|U_{y,\eta}(X,V)|(X',V)|\|_{L^1(\mathcal{Q})}< \infty,
\end{equation*}
and taking supremum over all $\psi\in BL_{1,\kappa}(\bar{\RR}^n_+\times\SS^{n-1})$.
\end{proof}

We now obtain a lower bound on the mismatch between ballistic transport and the pencil-beam model. This shows that spreading is indeed not accounted for by the ballistic transport solution, and this effect becomes visible once we set a high measuring resolution of order $\epsilon^{-1}$ or higher.
\begin{lemma}\label{lemma:lower_bound}
For a resolution parameter $\kappa\gtrsim\epsilon^{-1}$ and $\sigma,\lambda\in C(\bar{\RR}^n_+)$, there exists a constant $C(n,\sigma_0,\|\lambda\|_\infty)>0$ such that
\begin{equation*}
\frac{1}{C}\|g\|_{L^1}\leq \mathcal{W}^1_{\kappa}(v,\mathfrak{u}).
\end{equation*}
\end{lemma}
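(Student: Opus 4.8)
\emph{Plan.} The idea is to produce the lower bound by exhibiting a single admissible test function that resolves the $O(\epsilon)$ angular spreading of the pencil-beam. Fix a vector $\omega\in\RR^{n-1}$ of small length (to be chosen depending only on $n$ and on the implied constant in $\kappa\gtrsim\epsilon^{-1}$), a smooth bump $h_0:[0,\infty)\to[0,1]$ with $h_0\equiv1$ on $[1,2]$ and $\supp h_0\subset(\tfrac12,\tfrac52)$, and a smooth angular cutoff $\chi$ on $\SS^{n-1}$ that equals $1$ on a fixed neighborhood of $N$ and vanishes near the south pole $S$. Consider
\[
\psi(x,\theta):=h_0(x^n)\,\chi(\theta)\,\cos\!\big(\epsilon^{-1}\omega\cdot\mathcal{S}(\theta)\big)\in C(\bar{\RR}^n_+\times\SS^{n-1}),\qquad \|\psi\|_\infty\le1 .
\]
Since $\mathcal{S}$ has bounded gradient on $\supp\chi$ while $h_0'$ and $\nabla_\theta\chi$ are bounded, $\Lip(\psi)\le C_n\big(|\omega|\epsilon^{-1}+1\big)$, so for $|\omega|$ small enough and $\epsilon$ small we get $\Lip(\psi)\le\kappa$ whenever $\kappa\gtrsim\epsilon^{-1}$; hence $\psi\in BL_{1,\kappa}$ and, arguing as in the proof of Lemma~\ref{lemma:upp_bound}, it suffices to bound $\int_{Q}\psi\,(v-\mathfrak{u})$ below by $c\,\|g\|_{L^1}$.

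For the ballistic term I would use the explicit formula for $v$ and the change of variables $\mathcal{T}$ exactly as in Lemma~\ref{lemma:upp_bound}: since $\supp g$ lies where $|\eta-N|<C\epsilon^2$, there $\chi(\eta)=1$, $\eta^n=1+O(\epsilon^2)$ and $|\mathcal{S}(\eta)|=O(\epsilon^2)$, whence $\cos(\epsilon^{-1}\omega\cdot\mathcal{S}(\eta))=1+O(\epsilon^2)$ and
\[
\int_{Q}\psi\,v=\int_{\Gamma_-}g(y,\eta)\int_0^\infty h_0(X^n)\,e^{-\int_0^{X^n}\lambda(y+s\eta)\,ds}\,dX^n\,dS(y)\,d\eta+O(\epsilon^2\|g\|_{L^1}).
\]
For the pencil-beam term I would pass to stretched coordinates as in the proof of Theorem~\ref{thm:main1}. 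There $\cos(\epsilon^{-1}\omega\cdot\mathcal{S}(\theta))=\cos(\omega\cdot V)$ identically, $\chi\equiv1$ on the effective support of $U_{y,\eta}$ up to a Gaussian tail of size $O(\epsilon^\infty)$, and the volume factor $2^{1-n}c^{n-1}(\epsilon V)=1+O(\epsilon^2|V|^2)$ costs only $O(\epsilon^2\|g\|_{L^1})$ by Lemma~\ref{lemma:U_integrals}. Writing $U_{y,\eta}(X',X^n,V)=e^{-\int_0^{X^n}\lambda(y+s\eta)ds}\,\bH_1\big(X'-X^nV,\,X^n,\,V-\epsilon^{-1}\mathcal{S}(\eta)\big)$ and integrating out $X'$ leaves a Gaussian in $V$ centered at $\epsilon^{-1}\mathcal{S}(\eta)=O(\epsilon)$ with per-coordinate variance $2E_0=\tfrac12\int_0^{X^n}\sigma(y+s\eta)\,ds$, so
\[
\int U_{y,\eta}(X',X^n,V)\cos(\omega\cdot V)\,dX'dV=e^{-\int_0^{X^n}\lambda(y+s\eta)ds}\,e^{-\frac14|\omega|^2\int_0^{X^n}\sigma(y+s\eta)ds}\big(1+O(\epsilon^2)\big),
\]
and therefore $\int_{Q}\psi\,\mathfrak{u}$ equals the same double integral as $\int_{Q}\psi\,v$ but with the extra factor $e^{-\frac14|\omega|^2\int_0^{X^n}\sigma(y+s\eta)ds}$ inside, up to $O(\epsilon^2\|g\|_{L^1})$.

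Subtracting, the leading part of $\int_{Q}\psi\,(v-\mathfrak{u})$ is
\[
\int_{\Gamma_-}g(y,\eta)\int_0^\infty h_0(X^n)\,e^{-\int_0^{X^n}\lambda(y+s\eta)ds}\Big(1-e^{-\frac14|\omega|^2\int_0^{X^n}\sigma(y+s\eta)ds}\Big)dX^n\,dS(y)\,d\eta ,
\]
whose integrand is nonnegative and, for $X^n\in[1,2]$, is at least $e^{-2\|\lambda\|_\infty}\big(1-e^{-\frac14\sigma_0|\omega|^2}\big)>0$, since $\int_0^{X^n}\sigma(y+s\eta)ds\ge\sigma_0X^n\ge\sigma_0$ and $\int_0^{X^n}\lambda\le2\|\lambda\|_\infty$. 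As $\int_1^2h_0=1$ and $g\ge0$ (as assumed throughout \S\ref{subsec:ssbs}), the leading part is $\ge e^{-2\|\lambda\|_\infty}\big(1-e^{-\frac14\sigma_0|\omega|^2}\big)\|g\|_{L^1}$; absorbing the $O(\epsilon^2\|g\|_{L^1})$ errors for $\epsilon$ small gives $\mathcal{W}^1_\kappa(v,\mathfrak{u})\ge\int_{Q}\psi\,(v-\mathfrak{u})\ge\tfrac12e^{-2\|\lambda\|_\infty}\big(1-e^{-\frac14\sigma_0|\omega|^2}\big)\|g\|_{L^1}$, which is the claim with $1/C$ equal to this constant, depending only on $n$ (through $|\omega|$), $\sigma_0$ and $\|\lambda\|_\infty$.

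The routine but delicate part is the error bookkeeping: every correction — from the stretched-coordinate volume factor, from replacing $\eta$ by $N$, and from the Gaussian tail leaving the fixed cutoff $\chi$ — must be $O(\epsilon^2\|g\|_{L^1})$ with constants \emph{uniform over the compact set} $\supp g$, so that it survives the superposition over $(y,\eta)$; this uniformity is the same observation already used in Corollary~\ref{cor:main2}. The essential structural point is that, unlike in the upper-bound lemmas, no factor $\Lip(\psi)\sim\epsilon^{-1}$ enters these errors: the entire effect of the $O(\epsilon)$ shift of the arguments of the oscillatory test function is captured \emph{exactly} by the Gaussian identity $\int U\cos(\omega\cdot V)=\mathcal{E}\,e^{-a|\omega|^2}\cos(\omega\cdot\Theta)$, and it is precisely the factor $e^{-a|\omega|^2}<1$ — the signature of the $\Theta(\epsilon)$ angular spreading of the pencil-beam, which the ballistic solution lacks — that forces the lower bound once the resolution is raised to $\kappa\sim\epsilon^{-1}$.
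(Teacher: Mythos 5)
Your argument is correct and establishes the stated lower bound, but it follows a genuinely different route from the paper's. The paper probes the $O(\epsilon)$-angular spread of the pencil-beam \emph{in physical space}: it takes a test function $\psi=\psi_1(x^n)\psi_2(\theta)$ with $\psi_2$ equal to $1$ on $\supp g$ and supported in an angular ball of radius $\sim\epsilon\eta$ (so $\Lip(\psi)\sim\kappa$ after choosing $\eta\sim(\epsilon\kappa)^{-1}$), and then bounds below the quantity
\[
I_\eta = 1 - \int_{B(\eta)}\frac{e^{-|V|^2/(4E_0)}}{(4\pi E_0)^{(n-1)/2}}\,dV = \int_{|V|>\eta}\frac{e^{-|V|^2/(4E_0)}}{(4\pi E_0)^{(n-1)/2}}\,dV,
\]
i.e.\ the Gaussian mass that leaks outside the test function's angular window; this requires splitting the Gaussian-tail estimates between $n=2$ and $n\ge3$ (\eqref{case_n2}--\eqref{case_ngeq3}). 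You instead probe the spread \emph{on the Fourier side}: your oscillatory test function $\cos(\epsilon^{-1}\omega\cdot\mathcal{S}(\theta))$ turns the marginal Gaussian in $V$ into its characteristic function $e^{-E_0|\omega|^2}\cos(\epsilon^{-1}\omega\cdot\mathcal{S}(\eta))$, and the lower bound comes from the factor $1-e^{-E_0|\omega|^2}\ge 1-e^{-\tfrac14\sigma_0|\omega|^2}>0$ on the support of $h_0$. This has the advantage of being an exact identity rather than a tail estimate, avoids the dimension-by-dimension case analysis, and makes the dependence of $1/C$ on $n,\sigma_0,\|\lambda\|_\infty$ (and on the implicit constant in $\kappa\gtrsim\epsilon^{-1}$ through the choice of $|\omega|$, a dependence the paper also has) transparent. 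All your error terms — the $O(\epsilon^2)$ from the volume factor $2^{1-n}c^{n-1}(\epsilon V)$, the $O(\epsilon^2)$ from $\cos(\epsilon^{-1}\omega\cdot\mathcal{S}(\eta))=1+O(\epsilon^2)$, the $O(\epsilon^\infty)$ Gaussian tail outside $\supp\chi$ — are indeed uniform over $\supp g$ and over $X^n\in\supp h_0\Subset(0,\infty)$, and you correctly use only $\|g\|_{L^1}$ together with $g\ge 0$, so the superposition step is fine. One small point worth stating explicitly: your argument only needs the three moments $\int U_{y,\eta}$, $\int|V|^2U_{y,\eta}$, and the Gaussian characteristic function, all of which are finite for merely continuous $\sigma\ge\sigma_0>0$, so (like the paper's proof) it does not rely on the $C^3$ regularity of $\sigma$ used elsewhere via Lemma~\ref{lemma:U_integrals}.
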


\begin{proof}
For any $\psi\in BL_{1,\kappa}(\bar{\RR}^n_+\times\SS^{n-1})$, we have equality \eqref{v-u} for the rescaling $\Psi(X,V) = \psi( 2\epsilon X',X^n,\mathcal{J}(\epsilon V))$. We take an specific 
$\psi(x,\theta) = \psi_1(x^n)\psi_2(\theta)$ satisfying that:
\begin{itemize}
\item[1.] $0\leq \psi_1,\psi_2\leq 1$ and $\|\psi\|_\infty =1$;
\item[2.] $\psi_2(\theta)=1$ in the support of $g$ (i.e. for all $|N-\theta|\leq C\epsilon^2$);
\item[3.] denoting $\Psi_2(V) = \psi_2(\mathcal{J}(\epsilon V))$, $\supp\Psi_2\subset B(\eta)$ with $\eta>0$ to be chosen; 
\item[4.] $\supp\psi_1\subset (a,b)\Subset(0,\infty)$.
\end{itemize} 
For this choice of test function (and recalling $\tilde{g}(Y',W) = (2\epsilon)^{2(n-1)}g(2\epsilon Y',0,\mathcal{J}(\epsilon W))$) we have
\begin{equation*}
\begin{aligned}
&\int \psi(x,\theta)(v-\mathfrak{u})dxd\theta\\
&\hspace{0em}\geq  \int_{\RR^{2(n-1)}} \int^{\infty}_0\tilde{g}(Y',W)e^{-\int^{X^n}_0\lambda((\epsilon Y',0)+s(\epsilon W,1))ds}\\
&\hspace{6em}\times\Psi_1(X^n)\Psi_2(W)dX^n dY'dW \\
&\hspace{1em}-\int_{\RR^{2(n-1)}}  \int^{\infty}_0\tilde{g}(Y',W)\Psi_1(X^n)\\
&\hspace{6em}\times\Big(\int_{\RR^{2(n-1)}} U_{(\epsilon Y',0),\mathcal{J}(\epsilon W)}(X',X^n,V)\Psi_2(V)dX'dV\Big)dX^ndY'dW  \\
&\hspace{1em} - |R(\epsilon^2)|.\\
\end{aligned}
\end{equation*}
We rewrite this inequality as
\begin{equation}\label{lower_bound_psi}
\begin{aligned}
\int \psi(x,\theta)(v-\mathfrak{u})dxd\theta&\geq \int_{\RR^{2(n-1)}} \int^{\infty}_0\tilde{g}(Y',W)e^{-\int^{X^n}_0\lambda((\epsilon Y',0)+s(\epsilon W,1))ds}\\
&\hspace{2em}\times I_\eta(X^n;Y',W)\Psi_1(X^n)dX^ndY'dW - |R(\epsilon^2)|,
\end{aligned}
\end{equation}
where $I_\eta(X^n;Y',W)$ is defined as
\begin{equation*}
\begin{aligned}
&I_\eta(X^n;Y',W) \\
&\hspace{1em}= 1 - \int_{\RR^{n-1}\times B(\eta)}e^{\int^{X^n}_0\lambda((\epsilon Y',0)+s(\epsilon W,1))ds} U_{(\epsilon Y',0),\mathcal{J}(\epsilon W)}(X',X^n,V) dX'dV.
\end{aligned}
\end{equation*}
We see that
\begin{equation*}
\begin{aligned}
I_\eta(X^n;Y',W)&= 1- \int_{B(\eta)} \frac{e^{-\frac{|V|^2}{4E_0(X^n;Y',W)}}}{(4\pi E_0(X^n;Y',W))^{\frac{n-1}{2}}}dV\\ 
&= \int_{|V|>\eta} \frac{e^{-\frac{|V|^2}{4E_0(X^n;Y',W)}}}{(4\pi E_0(X^n;Y',W))^{\frac{n-1}{2}}}dV,
\end{aligned}
\end{equation*}
with $E_0(X^n;Y',W) = 4\int^{X^n}_0\sigma((\epsilon Y',0)+s(\epsilon W,1))dt$. By considering spherical coordinates and noticing that
\begin{equation*}
\frac{\eta}{\sqrt{4E_0(X^n;Y',W)}}\leq \frac{\eta}{4\sqrt{a\sigma_0}}=:\eta_0,\quad\forall (Y',W)\in\supp(\tilde{g}),\;X^n\in(a,b),
\end{equation*}
we have
\begin{equation*}
I_\eta(X^n;Y',W) \geq c_n\int^\infty_{\eta_0}e^{-r^2}r^{n-2}dr,
\end{equation*}
for $c_n = \frac{|\SS^{n-2}|}{\pi^{(n-1)/2}} = 2\Gamma((n-1)/2)^{-1}$. For $n=2$, we can choose $R>\eta^0$ so that
\begin{equation}\label{case_n2}
\int_{\eta_0}^\infty e^{-r^2}dr\geq \frac{1}{R}\int^R_{\eta_0}e^{-r^2}rdr 
= \frac{1}{2R}(e^{-\eta_0^2}-e^{-R^2}).
\end{equation}
If now $n\geq 3$, it is not hard to verify that
\begin{equation}\label{case_ngeq3}
\int_{\eta_0}^\infty e^{-r^2}r^{n-2}dr\geq e^{-\eta_0^2}\sum^{n-3}_{k=0}\frac{k!}{2^k}\eta_0^{n-3-k}.
\end{equation}

The parameter $\eta$ imposes a restriction on the size of the support of the (angular) test function $\Psi_2(V)$, which says that $\supp\Psi_2\subset B(\eta)$. In order to fulfill this support condition we want our test function not to decay too slowly, which means we need to have a function $\Psi$ as above and such that
\begin{equation*}
|\Psi_2\|_\infty=1\quad\text{and}\quad\text{Lip}_V(\Psi_2)\geq \eta^{-1}.
\end{equation*}
Going back to the original angular variable $\theta$ this translates into 
\begin{equation*}
C\text{Lip}_\theta(\psi_2)\geq (\epsilon\eta)^{-1},
\end{equation*}
for some constant $C>0$ (independent of the parameters), hence, this condition is fulfilled by choosing $\eta$ such that
\begin{equation*}
\kappa\gtrsim \epsilon^{-1}\eta^{-1}.
\end{equation*}
In dimension 2, we choose $\eta=O(\epsilon^{-1}\kappa^{-1})$ so that $\eta_0 = \epsilon^{-1}\kappa^{-1}$, and $R=\sqrt{2}\eta_0$, thus we obtain 
\begin{equation*}
I_\eta(X^n;Y',W) \geq c\epsilon\kappa e^{-{(\epsilon\kappa)}^{-2}}(1-e^{-{(\epsilon\kappa)}^{-2}}),
\end{equation*}
for some constant $c$ independent of $\epsilon$ and uniform with respect to $X^n,Y'$ and $W$. According to our assumption $\kappa\gtrsim\epsilon^{-1}$ we deduce that for some $c>0$,
\begin{equation*}
I_\eta(X^n;Y',W) \geq c\epsilon\kappa.
\end{equation*}
Similarly in dimension $n\geq 3$, for $\eta$ as above we get the next uniform lower bound 
\begin{equation*}
I_\eta(X^n;Y',W) \geq e^{-(\epsilon\kappa)^{-2}}\sum^{n-3}_{k=0}\frac{k!}{2^k}(\epsilon\kappa)^{-(n-3-k)}\geq c,
\end{equation*}
for another constant $c>0$. Introducing this lower bounds into \eqref{lower_bound_psi} yields that for $\kappa\gtrsim\epsilon^{-1}$ 
\begin{equation*}
\|\lambda\|_\infty^{-1}\|g\|_{L^1}(1 - 
\epsilon^2)\leq C\int \psi(x,\theta)(v-\mathfrak{u})dxd\theta\leq C\mathcal{W}^1_{\kappa}(v,\mathfrak{u}).
\end{equation*}
\end{proof}

We summarize the previous results of this section in the form of our main theorem which we rephrase here.\\

{\sc Theorem 1.1.} {\em Let $u$ be the solution to Fokker-Planck with boundary source $g$ as in \eqref{bdry_cond_app2} and $\|g\|_{L^1}=1$. Let $\mathfrak{u}$ the superposition of pencil beam in \eqref{sup_pb} and $v$ the ballistic transport solution of \eqref{lin_transp}. For any dimension $n\geq 2$ there exists a constant $C(n,\|\sigma\|_{C^3},\|\lambda\|_{C^2})>0$ such that for $\kappa\gtrsim 1$,
\begin{equation*}
\mathcal{W}^1_{\kappa}(u,v)\leq C\kappa\epsilon\quad\text{and}\quad\mathcal{W}^1_{\kappa}(u,\mathfrak{u})\leq C\epsilon^2\kappa.
\end{equation*}
Moreover, if the resolution parameter is such that $\kappa\approx\epsilon^{-1}$, then there is $C>0$ so that
\begin{equation*}
C^{-1}\leq \mathcal{W}^1_{\kappa}(u,v)\leq C \quad\text{and}\quad \mathcal{W}^1_{\kappa}(u,\mathfrak{u})=O(\epsilon).
\end{equation*}
}
\begin{proof}
It follows directly from corollary \ref{cor:main2} and lemmas \ref{lemma:upp_bound} and \ref{lemma:lower_bound}.
\end{proof}


\section*{Acknowledgement}
The authors would like to thank Luis Silvestre for multiple comments on the manuscript and for bringing \cite{IS} to their attention.
This research was partially supported by the Office of Naval Research, Grant N00014-17-1-2096 and by the National Science Foundation, Grant DMS-1908736. 
\begin{appendix}

\section{Complete proof of Theorem \ref{thm:higher_reg}}\label{appdx:proofThm_reg}
We follow the method presented in \cite{Bo} which is based on a H\"ormander-type identity for the commutator of certain operators.

\noindent {\it 1) Localization and Mollification.} 
Let $\eta^\delta$ be a mollifier and $\chi$ a cutoff function supported in a neighborhood $\mathcal{U}\Subset Q$ of a point $(x_0,\theta_0)$. Let $\mathcal{V}$ and $\mathcal{W}$ be two open sets containing $(x_0,\theta_0)$ and such that $\chi = 1$ in $\mathcal{V}$ and $\mathcal{V}\Subset\mathcal{U}\Subset\mathcal{W}$. We choose $\delta>0$ small enough so that $\mathcal{U}+\supp(\eta^\delta)\subset \mathcal{W}$. Since we can always rotate the coordinate system in $\RR^n$, we lose no generality in assuming that $\theta_0$ is contained in the span of $(1,0\dots,0)$ and $(0,\dots,0,1)=N$. We first consider beams coordinates on $\SS^{n-1}$ with respect to the north pole $N$. Then $N$ is identified with $0\in \RR^{n-1}$ and we write $\eta^\delta(x,v)$ and $\chi(x,v)$ for the respective representative function of $\eta^\delta$ and $\chi$ in the local coordinates. We abuse the notation and write $\mathcal{U}$ interchangeably for the set considered above and its image under local coordinates.

Let's plug
$$\phi(y,\xi) = \eta^\delta(x-y,\theta-\xi)\chi(x,\theta)$$
into equation \eqref{FP_loc_coord}. From the transport part we get
$$
\begin{aligned}
&\hspace{-3em}-\chi(x,v)\int\ u(y,w)(\partial_{y^n}+w\cdot \nabla_{y'} )[\eta^\delta(x-y,v-w)]dyd w \\
&\hspace{1em}=\chi(x,v)\int\ u(y,w)(\partial_{x^n}\eta^\delta(x-y,v-w)+v\cdot \nabla_{x'} \eta^\delta(x-y,v-w))dyd w \\
&\hspace{8em}-\chi(x,v)\int\ u(y,w)((v-w)\cdot \nabla_{x'} \eta^\delta(x-y,v-w))dyd w\\
&\hspace{1em}=(\partial_{x^n}+v\cdot\nabla_{x'})[\chi (u*\eta^\delta)] - \underbrace{(u*\eta^\delta)(\partial_{x^n}\chi+v\cdot\nabla_{x'}\chi)+\chi(u*(v\cdot\nabla_{x'}\eta^\delta))}_{=:g_1}.
\end{aligned}
$$
Using the notation in \eqref{opAB} the diffusion term gives
$$
\begin{aligned}
&\chi(x,v)\int \langle\frac{\epsilon^2\sigma(y)}{\langle w\rangle^{n-2}}(Id + ww^T)\nabla_w u,\nabla_w(\langle w\rangle^{n+1}\eta^\delta(x-y,v-w))\rangle dydw\\
&\hspace{3em}=-\chi(x,v)\int \nabla_wu \cdot(A(y,w) \nabla_v\eta^\delta(x-y,v-w)) + u(B(x,w)\cdot \nabla_v\eta^\delta(x-y,y-w)) dydw\\
&\hspace{3em}=-\nabla_v\cdot(A\nabla_v(\chi(u*\eta^\delta)) + (u*\eta^\delta)(\nabla_v\cdot A\nabla_v\chi)\\
&\hspace{6em} + B\cdot\nabla_v(\chi(u*\eta^\delta)) - (u*\eta^\delta)(B\cdot\nabla_v\chi)\\
&\hspace{6em} - \chi(x,v)\int \nabla_wu \cdot H_A(y,w)[(x-y,v-w),\nabla_v\eta^\delta(x-y,v-w)]dydw \\
&\hspace{6em} - \chi(x,v)\int u \;H_B(y,w)[(x-y,v-w),\nabla_v\eta^\delta(x-y,y-w)] dydw\\
&\hspace{3em}=-\nabla_v\cdot(A\nabla_v(\chi(u*\eta^\delta)) + B\cdot\nabla_v(\chi(u*\eta^\delta)) - g_2,
\end{aligned}
$$
for some multi-linear operators $H_A$ and $H_B$ arising from the Taylor expansion of $A$ and $B$, respectively, around the point $(x,v)$. Finally, the zero order term in \eqref{FP_loc_coord} gives
$$
\begin{aligned}
&\chi(x,v)\int c(y,w)u(y,w)\eta^\delta(x-y,v-w)dydw \\
&\hspace{3em}= c(\chi(u*\eta^\delta)) - \underbrace{\chi(x,v)\int u(y,w)((x-y,v-x)\cdot h_c(y,w))\eta^\delta(x-y,v-w)dydx}_{=:g_3}\\
\end{aligned}
$$
with $h_c$ from the remainder of the first order Taylor expansion of $c(x,v)$. 
From the previous we deduce that the function
$$u^\delta := (u*\eta^\delta)\chi\in C^\infty_c(\mathcal{U}),$$ 
satisfies
\begin{equation}\label{smooth_FP}
-\nabla_v\cdot(A\nabla_vu^\delta) + \partial_{x^n}u^\delta+v\cdot\nabla_{x'} u^\delta + B\cdot\nabla_vu^\delta+ c u^\delta = f^\delta,\quad\forall (x,v)\in \mathcal{U},
\end{equation}
with 
$$f^\delta:= (\hat{f}*\eta^\delta)\chi + g_1+g_2+g_3\in C_c^\infty(\mathcal{U}).$$
One verifies that
$$
\|g_1+g_2+g_3\|_{L^2(\mathcal{U})}\leq C(\| u\|_{L^2(\mathcal{U})} + \epsilon^2\|\nabla_v u\|_{L^2(\mathcal{U})}),
$$
for some constant depending on $\|\sigma\|_{C^1}$, $\|\lambda\|_{C^1}$, $\|\eta^\delta\|_{L^1}$ and $\||(x,\theta)|\nabla_\theta\eta^\delta\|_{L^1}$.

Since $u, \nabla_v u\in L^2(\mathcal{V})$ and $\chi=1$ in $\mathcal{V}$, then $u^\delta\to u$ and $\nabla_v u^\delta\to \nabla_v u$, as $\delta\to 0$, in the $L^2$-sense. Furthermore we have the following estimate:
\begin{equation}\label{ub_fsmooth}
\begin{aligned}
\|f^\delta\|_{L^2(\mathcal{U})} &\leq \|f\|_{L^2(\mathcal{W})} + C(\epsilon^2 \|\nabla_\theta u\|_{L^2(\mathcal{W})}+\|u\|_{L^2(\mathcal{W})})\\
&\leq \|f\|_{L^2(\mathcal{W})} +C(\epsilon^2 \|\nabla_\theta u\|_{L^2(Q)}+\|u\|_{L^2(Q)}),
\end{aligned}
\end{equation}
uniformly with respect to $\delta\ll 1$, and where the constant $C>0$ depends on $\sigma$ and $\lambda$ through they $C^1$-norms.\\

\noindent{\it 2) Higher regularity estimates for smooth compactly supported solutions.}  Let's drop the subindex $\delta$ for a moment and assume $u$ is smooth and compactly supported.
Without lost of generality we assume $\mathcal{U} = X_0\times\Theta_0$, neighborhood of $(x_0,\theta_0)\in Q$. 

At the core of this proof is the next commutator identity. Denoting the transport operator 
$$T = (\partial_{x^n} + v\cdot\nabla_{x'}),$$
we have: 
$$\partial_{x^j} = \partial_{v_j}T - T\partial_{v^j},\quad j=1,\dots,n-1.$$
Under our new notation \eqref{smooth_FP} rewrites as
\begin{equation}\label{eq_T}
\mathcal{L}u:=T u -\nabla_v\cdot(A\nabla_vu) + B\cdot\nabla_vu+ c u = f,\quad\forall (x,v)\in \mathcal{U}.
\end{equation}
We abbreviate the above expression as $ Tu = h$ with $h$ of the form
$$h = \nabla_v\cdot(A\nabla_vu) - B\cdot\nabla_vu- c u+f.$$
Denoting $\langle f,g\rangle = \int fg dxdv$ the $L^2$-bracket, we have:
$$
\begin{aligned}
\|D^{-1/3}_{x'}\partial_{x^j}u\|^2_{L^2} &= \langle D^{-2/3}_{x'}\partial_{x^j}\bar{u},\partial_{x^j}u\rangle\\
&=\langle D^{-2/3}_{x'}\partial_{x^j}\bar{u},\partial_{v_j}Tu  - T\partial_{v^j}u \rangle\\
&=\langle D^{-2/3}_{x'}\partial_{x^j}\bar{u},\partial_{v_j}h  -T\partial_{v^j}u \rangle\\
&=-\langle \partial_{v^j}D^{-2/3}_{x'}\partial_{x^j}\bar{u},h \rangle + \langle  T D^{-2/3}_{x'}\partial_{x^j}\bar{u},\partial_{v^j}u \rangle\\
&=-\langle \partial_{v^j}D^{-2/3}_{x'}\partial_{x^j}\bar{u},h \rangle + \langle  D^{-2/3}_{x'}\partial_{x^j}T\bar{u},\partial_{v^j}u \rangle\\
&=-\langle \partial_{v^j}D^{-2/3}_{x'}\partial_{x^j}\bar{u},h \rangle - \langle  \bar{h},D^{-2/3}_{x'}\partial_{x^j}\partial_{v^j}u \rangle\\
&=-\langle \partial_{v^j}D^{-2/3}_{x'}\partial_{x^j}\bar{u},h \rangle - \langle  D^{-2/3}_{x'}\partial_{x^j}\partial_{v^j}u ,\bar{h}\rangle\\
&=-2\mathfrak{Re}\langle D^{-2/3}_{x'}\partial_{x^j}\partial_{v^j}\bar{u}, h \rangle.
\end{aligned}
$$
We can then bound from above as follows,
$$
\begin{aligned}
\|D^{-1/3}_{x'}\partial_{x^j}u\|^2_{L^2} &\leq 2\|D^{-2/3}_{x'}\partial_{x^j}\partial_{v^j}u\|_{L^2}\| h \|_{L^2}\\
&\leq C\sum_{i=1}^n\|D^{-2/3}_{x'}\partial_{x^i}\nabla_{v}u\|_{L^2}\| h \|_{L^2}\\
&\leq C\|D^{1/3}_{x'}\nabla_{v}u\|_{L^2}\| h \|_{L^2}.
\end{aligned}
$$
therefore,
\begin{equation}\label{ineq_2/3der}
\| D^{2/3}_{x'}u\|^2_{L^2}\leq C\|D^{1/3}_{x'}\nabla_v u\|_{L^2} \| h\|_{L^2}.
\end{equation}

On the other hand, 
$$
\mathcal{L}D^{1/3}_{x'}u = D^{1/3}_{x'}f + [\mathcal{L},D^{1/3}_{x'}]u,
$$
which implies
$$
(T - \nabla_v\cdot A\nabla_v)D^{1/3}_{x'}u = D^{1/3}_{x'}f +\nabla_v\cdot(D^{1/3}_{x'}A)\nabla_vu - D^{1/3}_{x'}(B\cdot \nabla_vu)- D^{1/3}_{x'}(cu).
$$
Multiplying by $D^{1/3}_{x'}\bar{u}$ and integrating gives us
$$
\begin{aligned}
\int_{\RR^{n}_+\times\RR^{n-1}} \langle A\nabla_v D^{1/3}_{x'}u,\nabla_v D^{1/3}_{x'}u\rangle dxdv 
&= \mathfrak{Re}\langle D^{1/3}_{x'}\bar{u},D^{1/3}_{x'}(f- c u)\rangle + \mathfrak{Re}\langle D^{1/3}_{x'}\bar{u}, \nabla_v\cdot (D^{1/3}_{x'}A)\nabla_v u\rangle\\
&= \mathfrak{Re}\langle D^{2/3}_{x'}\bar{u},(f- c u)\rangle-\mathfrak{Re}\langle D^{1/3}_{x'}\nabla_v\bar{u}, (D^{1/3}_{x'}A)\nabla_v u\rangle,
\end{aligned}
$$
where some of the term have vanished due to integration by parts. 
Then, recalling that $A$ is positive definite, we obtain
$$
\begin{aligned}
\epsilon^2\|\nabla_v D^{1/3}_{x'}u\|^2_{L^2} 
&\leq C\|D^{2/3}_{x'}u\|_{L^2}\|f\|_{L^2} + \|\nabla_v D^{1/3}_{x'}u\|_{L^2}\|(D^{1/3}_{x'}A)\nabla_v u\|_{L^2}\\
&\leq C\|D^{2/3}_{x'}u\|_{L^2}\|f\|_{L^2} + C\epsilon\|\nabla_v D^{1/3}_{x'}u\|_{L^2}\|f\|_{L^2},
\end{aligned}
$$
where we used the inequality $\epsilon\|\nabla_v u\| + \|u\|_{L^2}\leq \|f\|_{L^2}$ which is the natural energy estimate of \eqref{eq_T}.
We can directly combine the previous inequality with \eqref{ineq_2/3der} to deduce
$$
\begin{aligned}
\epsilon^2\|\nabla_v D^{1/3}_{x'}u\|^2_{L^2} \leq C\|D^{1/3}_x\nabla_v u\|_{L^2}^{1/2} \| h\|_{L^2}^{1/2}\|f\|_{L^2}  +  C\epsilon\|\nabla_v D^{1/3}_{x'}u\|_{L^2}\|f\|_{L^2} .
\end{aligned}
$$
Simplifying some terms and applying Young's inequality: $ab\leq\frac{a^p}{p}+\frac{b^q}{q}$ for $p^{-1}+q^{-1}=1$; we get
$$
\|\nabla_v D^{1/3}_{x'}u\|^{3/2}_{L^2} \leq C\epsilon^{-2}\| h\|_{L^2}^{1/2}\|f\|_{L^2}  + C\epsilon^{-3/2}\|f\|_{L^2}^{3/2},
$$
which can be rewritten as
\begin{equation*}\label{ineq_1/3der}
\|\nabla_v D^{1/3}_{x'}u\|_{L^2}\leq C\big(\epsilon^{-4/3}\| h\|_{L^2}^{1/3}\|f\|_{L^2}^{2/3}  + \epsilon^{-1}\|f\|_{L^2}\big).
\end{equation*}
Plugging this into \eqref{ineq_2/3der} gives
\begin{equation}\label{ineq_2/3der2}
\begin{aligned}
\| D^{2/3}_{x'}u \|_{L^2}&\leq C\big(\epsilon^{-2/3}\| h\|_{L^2}^{2/3}\|f\|_{L^2}^{1/3}  + \epsilon^{-1/2}\| f\|_{L^2}^{1/2}\|h\|_{L^2}^{1/2}\big).
\end{aligned}
\end{equation}
\smallskip

If we now multiply \eqref{eq_T} by $-\nabla_v\cdot A\nabla_v \bar{u}$ and integrate, this yields
$$\mathfrak{Re}\langle -\nabla_v\cdot A\nabla_v \bar{u},Tu \rangle + \int |-\nabla_v\cdot A\nabla_vu|^2dx dv = \mathfrak{Re}\langle -\nabla_v\cdot A\nabla_v \bar{u}, -B\cdot\nabla_vu-c u+f\rangle.$$
However,
$$
\begin{aligned}
\big|\mathfrak{Re}\langle-\nabla_v\cdot A\nabla_v \bar{u},Tu \rangle\big| &\leq \big|\mathfrak{Re}\langle A\nabla_v\bar{u},\nabla_vTu \rangle\big|\\
&=\big|\mathfrak{Re}\langle A\nabla_{v}\bar{u},T\nabla_{v}u +\nabla_{x'}u\rangle\big| \\
&= \big|\mathfrak{Re}\langle[-T,A]\nabla_{v}\bar{u},\nabla_vu\rangle \big|+\big|\mathfrak{Re}\langle A\nabla_{v}\bar{u},\nabla_{x'}u\rangle \big| \\
&\leq C(\epsilon^2\|\nabla_vu\|^2_{L^2} + \|D^{1/3}_{x'}\nabla_vu\|_{L^2}\|D^{-1/3}_{x'}\nabla_{x'}u\|_{L^2})\\
&\leq C(\epsilon^2\|\nabla_vu\|^2_{L^2} + \|D^{1/3}_{x'}\nabla_vu\|_{L^2}\|D^{2/3}_{x'}u\|_{L^2}).
\end{aligned}
$$
therefore
$$
\begin{aligned}
\|\nabla_v\cdot A\nabla_v u\|^2_{L^2} &\leq C\|\nabla_v\cdot A\nabla_vu \|_{L^2}\big(\epsilon^2\|\nabla_vu\|_{L^2} + \|u\|_{L^2}+\|f\|_{L^2}\big) \\
&\hspace{2em}+C(\epsilon^2\|\nabla_vu\|^2_{L^2} + \|D^{1/3}_{x'}\nabla_vu\|_{L^2}\|D^{2/3}_{x'}u\|_{L^2}).
\end{aligned}
$$
It follows from H\"older inequality in Fourier domain that
$$\|D^{1/3}_{x'}\nabla_v u\|_{L^2} \leq \|D^{2/3}_{x'}u\|^{1/2}_{L^2}\| \Delta
_v u\|_{L^2}^{1/2}\leq C\|D^{2/3}_{x'}u\|^{1/2}_{L^2}\| \nabla
_v\cdot A\nabla_v u\|_{L^2}^{1/2},$$
which plugged into the previous estimate gives
$$
\begin{aligned}
\|\nabla_v\cdot A\nabla_v u\|^2_{L^2}  &\leq C\|\nabla_v\cdot A\nabla_vu \|_{L^2}\big(\epsilon^2\|\nabla_vu\|_{L^2} + \|u\|_{L^2}+\|f\|_{L^2}\big) \\
&\hspace{3em}+C(\epsilon^2\|\nabla_vu\|^2_{L^2} + \| \nabla
_v\cdot A\nabla_v u\|_{L^2}^{1/2}\|D^{2/3}_{x'}u\|_{L^2}^{3/2})\\
&\leq  C\|\nabla_v\cdot A\nabla_vu \|_{L^2}\|f\|_{L^2}+C\|f\|_{L^2}^2\\
&\hspace{3em}+C \| \nabla
_v\cdot A\nabla_v u\|_{L^2}^{1/2}\big(\epsilon^{-1}\| h\|_{L^2}\|f\|_{L^2}^{1/2}  + \epsilon^{-3/4}\|f\|_{L^2}^{3/4}\|h\|_{L^2}^{3/4}\big),
\end{aligned}
$$
which then implies
$$
\begin{aligned}
\|\nabla_v\cdot A\nabla_v u\|^{2}_{L^2}  &\leq  C\|f\|_{L^2}^2+C\big(\epsilon^{-1}\| h\|_{L^2}\|f\|_{L^2}^{1/2}  + \epsilon^{-3/4}\|f\|_{L^2}^{3/4}\|h\|_{L^2}^{3/4}\big)^{4/3}\\
&\leq  C\big(\|f\|_{L^2}^2+\epsilon^{-4/3}\| h\|_{L^2}^{4/3}\|f\|_{L^2}^{2/3}  + \epsilon^{-1}\|f\|_{L^2}\|h\|_{L^2}\big)
\end{aligned}
$$
Young's inequality yields
$$
\|\nabla_v\cdot A\nabla_v u\|_{L^2}\leq C\big(\|f\|_{L^2}+\epsilon^{2}\|h\|\big),
$$
thus, from the estimate $\|h\|_{L^2}\leq \|\nabla_v\cdot A\nabla_v u\|_{L^2} + C\|f\|_{L^2}$, we finally deduce
$$
\|\nabla_v\cdot A\nabla_v u\|_{L^2}\leq C\|f\|_{L^2}.
$$
It follows from the local representation of $\Delta_\theta$ in beam coordinates that
$$
\epsilon^{2}\|\Delta_\theta u\|_{L^2}\leq C\|f\|_{L^2}.
$$
On the other hand, the previous inequality implies $\|h\|_{L^2}\leq C\|f\|_{L^2}$, therefore \eqref{ineq_2/3der2} yields
$$
\begin{aligned}
\| D^{2/3}_{x'}u \|_{L^2}&\leq C\big(\epsilon^{-2/3}\|f\|_{L^2}  +\epsilon^{-1/2}\|f\|_{L^2}\big)\leq C\epsilon^{-2/3}\|f\|_{L^2}.
\end{aligned}
$$

Considering local coordinates given instead by beam coordinates around $(1,0\dots,0)$ (whose domain contains $\theta_0$), we can repeat the computations above in terms of the pseudodifferential operator $D^s_{x''}= (1-\Delta_{x''})^{s/2}$, where we decompose the spatial variables as $x = (x^1,x'')$. Consequently, we obtain an analogous inequality for $\|D^{2/3}_{x''}u\|_{L^2}$. It then follows that 
$$\| D^{2/3}_{x}u \|_{L^2}\leq C\epsilon^{-2/3}\|f\|_{L^2},$$
for $D^s_{x}= (1-\Delta_{x})^{s/2}$, which is obtained by recalling that in Fourier space:
$$
(1+|\xi|^2)^{s/2}\leq(1+|\xi'|^2 + 1+|\xi''|^2)^{s/2}\leq (1+|\xi'|^2)^{s/2}+(1+|\xi''|^2)^{s/2}.
$$

\noindent{\it 3) Back to original solution.} We now go back to our original notation and write $u^\delta$ for the smooth compactly supported solution and $u$ the original solution to \eqref{FP}. From the previous, we can find a sequence $\{\delta_j\}$ so that $\Delta_\theta u^{\delta_j}$ and $D^{2/3}_xu^{\delta_j}$ converge weakly in $L^2(\mathcal{V})$ to $\Delta_\theta u,D^{2/3}_xu\in L^2(\mathcal{V})$, respectively, as $j\to\infty$. Moreover, 
$$\|D^{2/3}_xu\|_{L^2(\mathcal{V})}\leq \liminf\|D^{2/3}_xu^{\delta_j}\|_{L^2(\mathcal{V})}\leq C\|f^{\delta_j}\|_{L^2(\mathcal{U})}\leq C(\|f\|_{L^2(\mathcal{W})} + \|u\|_{\mathcal{H}}),$$
and similarly for $\|\Delta_\theta u\|_{L^2(\mathcal{V})}$.
We have proven that $\Delta_\theta u,D^{2/3}u\in L^2_{loc}(Q)$. 
Moreover, for every compact $\mathcal{K}$ and open $\mathcal{O}$ such that  $\mathcal{K}\subset\mathcal{O}\subset Q$, there exist a constant $C,C'>0$ so that 
$$\epsilon^2\|\Delta_\theta u\|_{L^2(\mathcal{K})}\leq C\big(\|f\|_{L^2(\mathcal{O})} +\epsilon^2 \|\nabla_\theta u\|_{L^2}+\|u\|_{L^2}\big),$$
and
$$\epsilon^{2/3}\|D^{2/3}_xu\|_{L^2(\mathcal{K})}\leq C'\big(\|f\|_{L^2(\mathcal{O})} +\epsilon^2 \|\nabla_\theta u\|_{L^2}+\|u\|_{L^2}\big).$$
Using the equation satisfied by $u$ we also deduce $\theta\cdot\nabla_xu\in L^2_{loc}(Q)$ and
$$\|\theta\cdot\nabla_xu\|_{L^2(\mathcal{K})}\leq C\big(\|f\|_{L^2(\mathcal{O})} +\epsilon^2 \|\nabla_\theta u\|_{L^2}+\|u\|_{L^2}\big),$$
for some $C$ depending on the compact $\mathcal{K}$.

\section{Explicit solution to \eqref{backward_FPB}}\label{appdx:bFPBsolution}
Let $W$ be solution to the backward Fermi pencil-beam equation
\begin{equation*}
- \tilde{\sigma}\Delta_V W  - V \cdot\nabla_{X'} W- \partial_{X^n}W  + \tilde{\lambda}W= \Psi,\quad (X,V)\in \RR^{n}_+\times\RR^{n-1},
\end{equation*}
for $\Psi$ compactly supported and with vanishing conditions at infinity
$$
\lim_{X^n\to\infty}W = 0.
$$
We solve the equation in the whole space $\RR^n\times\RR^{n-1}$ and
claim that $W$ takes the form
$$W(X,V) = \int_{X^n}^\infty e^{-\int^t_{X^n}\tilde{\lambda}(s)ds}\tau_{-X^nV}({\bf H}_3(t)*\tau_{tV}\Psi(t))dt.$$

Let's introduce the following functions:
$$U(X,V) := W(-X,V)\quad F(X,V) := \Psi(-X,V),$$
defined for $X^n\leq0$. Then $U$ satisfies the equation
$$
- \tilde{\sigma}^-\Delta_V U  + V \cdot\nabla_{X'} U+ \partial_{X^n}U  + \tilde{\lambda}^-U= F,\quad (X,V)\in \RR^{n}\times\RR^{n-1},
$$
with $\tilde{\sigma}^-(t) = \tilde{\sigma}(-t)$ and $\tilde{\lambda}^-(t) = \tilde{\lambda}(-t)$.
Let's set
$$\bfU(\xi,X^n,V) = e^{iX^n(V\cdot\xi) + \int^{X^n}_{-\infty}\tilde{\lambda}^-(s)ds}\mathfrak{F}_{X'}(U),\quad\text{and}\quad \bfF(\xi,X^n,V) = e^{iX^n(V\cdot\xi) + \int^{X^n}_{-\infty}\tilde{\lambda}^-(s)ds}\mathfrak{F}_{X'}(F),$$
therefore
$$
\begin{aligned}
\partial_{X^n}\bfU &= [i(V\cdot\xi)+\tilde{\lambda}^-]\bfU + e^{iX^n(V\cdot\xi) + \int^{X^n}_{-\infty}\tilde{\lambda}^-(s)ds}\partial_{X^n}\mathfrak{F}_{X'}(U)\\
&=\tilde{\sigma}^-e^{iX^n(V\cdot\xi) + \int^{X^n}_{-\infty}\tilde{\lambda}^-(s)ds}\Delta_V(e^{-iX^n(V\cdot\xi) - \int^{X^n}_{-\infty}\tilde{\lambda}^-(s)ds}\mathfrak{F}_{X'}(U)) + \bfF\\
&=\tilde{\sigma}^-e^{iX^n(V\cdot\xi)}\Delta_V(e^{-iX^n(V\cdot\xi)}\mathfrak{F}_{X'}(U)) + \bfF
\end{aligned}
$$
Denoting 
$$\hat{\bfU}(\xi,X^n,\eta) = \mathfrak{F}_{V}(\bfU)\quad\text{and}\quad \hat{\bfF}(\xi,X^n,\eta) = \mathfrak{F}_{V}(\bfF),$$
then taking Fourier Transform on the equation above and using twice the identity
$$\mathfrak{F}_V[e^{\ii t(V\cdot\xi)}h(V)](\eta) = \mathfrak{F}_V[h(V)](\eta-t\xi),\quad t\in\RR,$$ 
we obtain
$$
\partial_{X^n}\hat{\bfU}+\tilde{\sigma}^-|\eta-X^n\xi|^2\hat{\bfU} = \hat{\bfF}.
$$
Recalling the vanishing condition imposed at infinity, $\lim_{X^n\to-\infty}U=0$, an explicit expression for $\hat{\bfU}$ is given by
$$
\hat{\bfU}(\xi,X^n,\eta) = \int^{X^n}_{-\infty}e^{-\int^{X^n}_t|\eta - s\xi|^2\tilde{\sigma}^-(s)ds}\hat{\bfF}(\xi,t,\eta)dt.
$$
Since $\hat{\bfU}(\xi,X^n,\eta)  = e^{\int^{X^n}_{-\infty}\tilde{\lambda}^-(s)ds}\mathfrak{F}_{X',V}[U](\xi,X^n,\eta-X^n\xi)$, then
$$
\begin{aligned}
U(X',X^n,V) &=e^{-\int^{X^n}_{-\infty}\tilde{\lambda}^-(s)ds}\mathfrak{F}_{X',V}^{-1}[\hat{\bfU}(\xi,X^n,\eta+X^n\xi)](X,V)\\
&= e^{-\int^{X^n}_{-\infty}\tilde{\lambda}^-(s)ds}\mathfrak{F}_{X'}^{-1}[e^{-iX^n(V\cdot\xi)}\mathfrak{F}_{X'}^{-1}[\hat{\bfU}(\xi,X^n,\eta)]](X,V)\\
&= e^{-\int^{X^n}_{-\infty}\tilde{\lambda}^-(s)ds}\mathfrak{F}_{X',V}^{-1}[\hat{\bfU}](X'-X^nV,X^n,V).
\end{aligned}
$$
Therefore
$$
\begin{aligned}
U(X',X^n,V) &= e^{-\int^{X^n}_{-\infty}\tilde{\lambda}^-(s)ds}\tau_{X^nV}(\mathfrak{F}_{X',V}^{-1}[\hat{\bfU}])\\
&=  e^{-\int^{X^n}_{-\infty}\tilde{\lambda}^-(s)ds}\tau_{X^nV}\Big(\int^{X^n}_{-\infty}\tilde{\bH}_3(t)*\mathfrak{F}_{X',V}^{-1}[\hat{\bfF}(t)]dt\Big)\\
\end{aligned}
$$
with
$$
\tilde{\bH}_3(X',X^n,V;t) = \mathfrak{F}_{X',V}^{-1}[e^{-\int^{X^n}_t|\eta + (X^n-s)\xi|^2\tilde{\sigma}^-(s)ds}].
$$
Analogously as for $U$, we have
$$
F(X',t,V) = e^{-\int^{t}_{-\infty}\tilde{\lambda}^-(s)ds}\mathfrak{F}_{X',V}^{-1}[\hat{\bfF}](X'-tV,t,V),
$$
which implies $\mathfrak{F}_{X',V}^{-1}[\hat{\bfF}] = e^{\int^{t}_{-\infty}\tilde{\lambda}^-(s)ds}\tau_{-tV}(F)$, and consequently
$$
U(X',X^n,V) =  \int^{X^n}_{-\infty}e^{-\int^{X^n}_{t}\tilde{\lambda}^-(s)ds}\tau_{X^nV}\Big(\tilde{\bH}_3(X,V;t)*\tau_{-tV}(F(X',t,V))\Big)dt.
$$
Going back to the original function, for $X^n>0$ we obtain
$$
\begin{aligned}
W(X',X^n,V) &= U(-X',-X^n,V)\\
 &=  \int^{-X^n}_{-\infty}e^{-\int^{-X^n}_{t}\tilde{\lambda}^-(s)ds} \tau_{-X^nV}\Big(\tilde{\bH}_3(-X,V;t)*\tau_{-tV}(F(-X',t,V))\Big)dt\\
  &=  \int^{\infty}_{X^n}e^{-\int^{-X^n}_{-t}\tilde{\lambda}^-(s)ds} \tau_{-X^nV}\Big(\tilde{\bH}_3(-X,V;-t)*\tau_{tV}(F(-X',-t,V))\Big)dt\\
    &=  \int^{\infty}_{X^n}e^{-\int^{t}_{X^n}\tilde{\lambda}(s)ds} \tau_{-X^nV}\Big(\tilde{\bH}_3(-X,V;-t)*\tau_{tV}(\Psi(X',t,V))\Big)dt,
\end{aligned}
$$
We conclude by writing $\bH_3(X,V;t) = \tilde{\bH}_3(-X,V;-t)$.

\section{Proof of Lemma \ref{lemma:FPb_Lip}}\label{appdx:proof_lemma_FPb_Lip}
Let $W$ be the solution to the backward Fermi pencil-beam equation with vanishing condition at infinity  \eqref{backward_FPB}-\eqref{backward_FPB_bc}. It is given explicitly by
$$W(X,V) = \int_{X^n}^\infty e^{-\int^t_{X^n}\tilde{\lambda}(s)ds}\tau_{-X^nV}({\bf H}_3(t)*\tau_{tV}\Psi(t))dt.$$
For $(X'_1,X^n,V_1),(X'_2,X^n,V_2)\in \RR^{2(n-1)}$ we have
$$
\begin{aligned}
&|W(X'_1,X^n,V_1)-W(X'_2,X^n,V_2)| \\
&\quad \leq \int_{X^n}^\infty e^{-\int^t_{X^n}\tilde{\lambda}(s)ds}\int\int {\bf H}_3(\tilde{X}',X^n,\tilde{V};t)\\
&\hspace{2em}\times\big|\Psi(X'_1-\tilde{X}'+(X^n-t)V_1 + t\tilde{V} ,t,V_1-\tilde{V})
-\Psi(X'_2-\tilde{X}'+(X^n-t)V_2 + t\tilde{V} ,t,V_2-\tilde{V})
\big|d\tilde{X}'d\tilde{V}dt\\
&\quad \leq \int_{X^n}^\infty e^{-\int^t_{X^n}\tilde{\lambda}(s)ds}\int\int {\bf H}_3(\tilde{X}',X^n,\tilde{V};t)\\
&\hspace{2em}\times |(X'_1-X'_2+(X^n-t)(V_1-V_2),V_1-V_2)|\Lip(\Psi)d\tilde{X}'d\tilde{V}dt\\
&\quad = \Lip(\Psi)\int_{X^n}^\infty e^{-\int^t_{X^n}\tilde{\lambda}(s)ds}\underbrace{\|{\bf H}_3(X^n;t)\|_{L^1(\RR^{2(n-1)})}}_{=const.}|(X'_1-X'_2+(X^n-t)(V_1-V_2),V_1-V_2)|dt\\
&\quad \leq C\Lip(\Psi)|(X'_1,V_1)-(X'_2,V_2)|\int_{X^n}^\infty e^{-\int^t_{X^n}\tilde{\lambda}(s)ds}|t-X^n|dt\\
&\quad = C\Lip(\Psi)|(X'_1,V_1)-(X'_2,V_2)|\int_{0}^\infty e^{-\int^{\tau + X^n}_{X^n}\tilde{\lambda}(s)ds}\tau d\tau\\
&\quad \leq C\lambda_0^{-2}\Lip(\Psi)|(X'_1,V_1)-(X'_2,V_2)|.
\end{aligned}
$$
\end{appendix}

\bibliographystyle{siamplain}

\begin{thebibliography}{99}

\bibitem{AlS} {\sc R. Alonso, \& W. Sun}, {\em The radiative transfer equation in the forward-peaked regime}, Comm. Math. Phys., 338(3) (2015), pp.~1233--1286.

\bibitem{A} {\sc M. Asadzadeh}, {\em The Fokker-Planck operator as an asymptotic limit in anisotropic media}, Math. Comput. Model., 35(9-10) (2002), pp.~1119--1133.

\bibitem{BJ} {\sc G. Bal \& A. Jollivet}, {\em Generalized stability estimates in inverse transport theory}, Inverse Probl. Imaging, 12(1) (2018), pp.~59--90.

\bibitem{BKR} {\sc G. Bal, T. Komorowski \& L. Ryzhik}, {\em Kinetic limits for waves in a random medium}. Kinet. Relat. Models, 3(4) (2010), pp.~529--644.

\bibitem{Ba} {\sc C. Bardos}, {\em Probl\`emes aux limites pour les \'equations aux d\'eriv\'ees partielles due premier ordre a coefficients r\'eels; th\'eor\`emes d’approximation application a l’\'equation de transport}, Ann. Sci. Ecole Norm. Sup. Partis, 3 (1970), pp.~185--233.

\bibitem{BL1} {\sc C. B\"orgers \& E. W. Larsen},  {\em Asymptotic derivation of the Fermi pencil-beam approximation}, Nuclear science and engineering, 123(3) (1996), pp.343--357.

\bibitem{BL2} {\sc C. B\"orgers \& E. W. Larsen}, {\em On the accuracy of the Fokker-Planck and Fermi pencil beam equations for charged particle transport}, Medical Physics, 23(10) 1996, pp.~1749--1759. 

\bibitem{Bo} {\sc F. Bouchut}, {\em Hypoelliptic regularity in kinetic equations}, J. Math. Pures Appl., 81 (2002), pp.~1135--1159.

\bibitem{Br} {\sc M. Bramanti}. {\em An invitation to hypoelliptic operators and H\"ormander's vector fields}, New York: Springer, 2014.

\bibitem{C} {\sc A. Carillo}, {\em Global weak solutions for the initial-boundary-value problems to the Vlasov-Poisson-Fokker-Planck system}, Math. Meth. Appl. Sci., 21 (1998), pp.~907--938.

\bibitem{CLW2} {\sc K. Chen, Q. Li \& L. Wang}, {\em Stability of inverse transport equation in diffusion scaling and Fokker--Planck limit}, SIAM J. Appl. Math., 78(5) (2018), pp.~2626--2647.

\bibitem{CCOCPH} {\sc E. Cueva, M. Courdurier, A. Osses, V. Casta\~neda, B. Palacios \& S. H\"artel}, {\em Mathematical Modeling for 2D Light-Sheet Fluorescence Microscopy image reconstruction}, Inverse Problems, (2020).

\bibitem{DM-G} {\sc P. Degond \& S. Mas-Gallic}, {\em Existence of solutions and diffusion approximation for a model Fokker-Planck equation},  Transport Theor. Stat., 16(4-6) (1987), pp.~589--636.

\bibitem{E} {\sc L. Eyges}, {\em Multiple scattering with energy loss}, Physical Review, 74(10) (1948), pp.~1534.

\bibitem{GIMV} {\sc F. Golse, C. Imbert, C. Mouhot \& F. Vasseur}, {\em Harnack inequality for kinetic Fokker-Planck equation with rough coefficients and application to the Landau equation}, Ann. Sc. Norm. Super. Pisa Cl. Sci., 19(1) (2019), pp.~253--295.

\bibitem{GPR} {\sc C. Gomez, O. Pinaud, \& L. Ryzhik}, {\em Hypoelliptic estimates in radiative transfer}, Comm. Partial Differential Equations, 41(1) (2016), pp.~150--184.

\bibitem{HaBDH} {\sc F. Hanson, I. Bendall, C. Deckard \& H. Haidar}, {\em Off-axis detection and characterization of laser beams in the maritime atmosphere}, Applied optics, 50(18) (2011), pp.~3050--3056.

\bibitem{H} {L. H\"ormander}, {\em Hypoelliptic second order differential equations}, Acta Math., 119 (1967), pp.~147--171.

\bibitem{HJJ} {\sc H. J. Hwang, J. Jang, \& J. Jung}, {\em On the kinetic Fokker-Planck equation in a half-space with absorbing barriers}, Indiana Univ. Math, J., 64(6) (2015), pp.~1767--1804.

\bibitem{HJJ2} {H.J. Hwang, J. Jang, \& J. Jung}, {\em The Fokker--Planck Equation with Absorbing Boundary Conditions in Bounded Domains}, SIAM J. Math. Anal., 50(2) (2018), pp.~2194--2232.

\bibitem{IS} {C. Imbert, \& L. Silvestre}, {\em The weak Harnack inequality for the Boltzmann equation without cut-off}, J. Eur. Math. Soc., 22(2) (2020), pp.~507--592.

\bibitem{KiKe} {\sc A.D. Kim, \& J. B. Keller}, {\em Light propagation in biological tissue}, JOSA A, 20(1) (2003), pp.~92--98.

\bibitem{K} {\sc J. J. Kohn}, {\em Pseudo-differential operators and hypoellipticity}, Proc. Symp. Pure Math, 23 (1973), pp.~61--69.

\bibitem{Lee} {\sc J. M. Lee}, {\em Introduction to Riemannian manifolds} (Vol. 2), Springer, 2018.

\bibitem{L} {J. L. Lions}, {\em Equations diff\'erentielles op\'erationnelles et probl\`emes aux limites} (Vol. 111). Springer-Verlag, 2013.

\bibitem{Po} {\sc G. C. Pomraning}, {\em The Fokker-Planck operator as an asymptotic limit}, Math. Models Methods Appl. Sci, 2(1) (1992), pp.~21--36.

\bibitem{RoRe} {\sc N. Roy, \& F. Reid}, {\em Off-axis laser detection model in coastal areas}, Optical Engineering, 47(8) (2008), pp.~086002.

\bibitem{SH} {\sc Q. Sheng, \& W. Han}, {\em Well-posedness of the Fokker–Planck equation in a scattering process}, J. Math. Anal. Appl., 406(2) (2013), pp.~531--536.

\bibitem{St} {\sc E. M. Stein \& R. Shakarchi}, {\em Real analysis: measure theory, integration, and Hilbert spaces}, Princeton University Press, 2009.

\bibitem{WZ} {\sc W. Wang, \& L. Zhang},  {\em $C^\alpha$-regularity of weak solutions of non-homogeneous ultraparabolic equations with drift terms}, preprint, arXiv:1704.05323v2 [math.AP], 2019.

\end{thebibliography}

\end{document}